\title{\vspace{-2cm}The linear Tur\'an number of the 3-graph $P_5$}
\author{Chaoliang Tang, Hehui Wu, Junchi Zhang}
\date{\today}
\newtheorem{thm}{Theorem}[section]
\newtheorem{lem}[thm]{Lemma}
\newtheorem{coro}[thm]{Corollary}
\newtheorem{prop}[thm]{Proposition}
\newtheorem{conj}[thm]{Conjecture}
\newtheorem{defi}[thm]{Definition}
\newtheorem{exmp}[thm]{Example}
\newtheorem{clm}[thm]{Claim}
\newcommand{\ti}{\textit}
\newcommand{\emphd}[1]{{\fontseries{b}\selectfont\textsf{#1}}}
\begin{document}

\maketitle

\begin{abstract}
We prove that for any linear 3-graph on $n$ vertices without a path of length 5, the number of edges is at most $\frac{15}{11}n$, and the equality holds if and only if the graph is the disjoint union of $G_0$, a graph with 11 vertices and 15 edges. Thus, $ex_L(n,P_5)\leq \frac{15}{11}n$, and the equality holds if and only if $11|n$.
\end{abstract}

\textit{Keyword:} 3-uniform linear hypergraph; linear Tur\'an number; linear path; $P_5$

\section{Introduction}
A \emphd{$k$-uniform hypergraph}, or a \emphd{$k$-graph}, is a pair $(V,E)$ such that $V$ is the set of vertices, and $E$ is a family of $k$-vertex subsets of $V$. A \emphd{linear 3-graph} $G=(V,E)$ is a 3-uniform hypergraph such that any two vertices are contained in at most one edge. Given a linear {3-graph} $F$, the \emphd{linear Tur\'an number} $ex_L(n,F)$ is the maximum number of edges in an $F$-free linear 3-graph on $n$ vertices.

Determining the Tur\'an number is a fundamental problem in extremal graph theory. A celebrated result is the famous (6,3)-theorem proved by Ruzsa and Szemer\'edi~\cite{MR519318}  that $ex_L(n,C_{3})=o(n^2)$, where the cycle $C_3$ is the unique linear 3-graph with 3 edges and 6 vertices. Determining the linear Tur\'an number of graphs seems surprisingly hard, even for {acyclic cases}.
Recently Gyárfás {et al.} initiated the study of linear Tur\'an number of some trees in~\cite{MR4315540}, where Gyárfás et al. proved that $ex_L(n,P_k)\leq 1.5kn$ and determined the linear Tur\'an number of $P_3, P_4, B_4$ and matchings. In~\cite{MR4477845}, Tang et al. proved that the linear Tur\'an number of the Crown graph $E_4$ is $1.5n$ and completed the determination of linear Tur\'an number for acyclic 3-graphs with at most 4 edges. The graphs mentioned are demonstrated as below.

\begin{figure}[h] 
	\centering
    \begin{tikzpicture}[scale=0.7, every node/.style={circle, fill, inner sep=1.2pt}]
	
	\node[fill=black] (v1)at(-1,0) {};
	\node[fill=black] (v2) at (0, 0) {};
	\node[fill=black] (v3) at (1, 0) {};
	\node[fill=black] (v4) at (0, 2) {};
	\node[fill=black] (v5) at (-0.5, 1) {};
	\node[fill=black] (v6) at (0.5, 1) {};

	\draw[black, thick] (v1) -- (v3);
    \draw[black, thick] (v4) -- (v3);
    \draw[black, thick] (v1) -- (v4);
	\node[draw=none,fill=none] at (0, -1) {$C_3$};
\end{tikzpicture}
	\hspace{0.4cm}
	\begin{tikzpicture}[scale=0.7, every node/.style={circle, fill, inner sep=1.2pt}]
	
	\node[fill=black] (v1)at(-1,-1) {};
	\node[fill=black] (v2) at (-1, 0) {};
	\node[fill=black] (v3) at (-1, 1) {};
	\node[fill=black] (v4) at (0, 1) {};
	\node[fill=black] (v5) at (1, 1) {};
	\node[fill=black] (v6) at (1, 0) {};
        \node[fill=black] (v7) at (1, -1) {};
	\draw[black, thick] (v1) -- (v2);
        \draw[black, thick] (v2) -- (v3);
        \draw[black, thick] (v3) -- (v4);
        \draw[black, thick] (v4) -- (v5);
        \draw[black, thick] (v5) -- (v6);
        \draw[black, thick] (v6) -- (v7);
	\node[draw=none,fill=none] at (0, -2) {$P_3$};
\end{tikzpicture}
	\hspace{0.4cm}
		\begin{tikzpicture}[scale=0.7, every node/.style={circle, fill, inner sep=1.2pt}]
	
	\node[fill=black] (v1)at(-1,-1) {};
	\node[fill=black] (v2) at (-1, 0) {};
	\node[fill=black] (v3) at (-1, 1) {};
	\node[fill=black] (v4) at (0, 1) {};
	\node[fill=black] (v5) at (1, 1) {};
	\node[fill=black] (v6) at (1, 0) {};
        \node[fill=black] (v7) at (1, -1) {};
        \node[fill=black] (v8) at (2, -1) {};
        \node[fill=black] (v9) at (3, -1) {};
	\draw[black, thick] (v1) -- (v2);
        \draw[black, thick] (v2) -- (v3);
        \draw[black, thick] (v3) -- (v4);
        \draw[black, thick] (v4) -- (v5);
        \draw[black, thick] (v5) -- (v6);
        \draw[black, thick] (v6) -- (v7);
        \draw[black, thick] (v7) -- (v8);
        \draw[black, thick] (v8) -- (v9);
	\node[draw=none,fill=none] at (1, -2) {$P_4$};
\end{tikzpicture}
\hspace{0.4cm}
		\begin{tikzpicture}[scale=0.7, every node/.style={circle, fill, inner sep=1.2pt}]
	
	\node[fill=black] (v1)at(-1,-1) {};
	\node[fill=black] (v2) at (-1, 0) {};
	\node[fill=black] (v3) at (-1, 1) {};
	\node[fill=black] (v4) at (0, 1) {};
	\node[fill=black] (v5) at (1, 1) {};
	\node[fill=black] (v6) at (1, 0) {};
        \node[fill=black] (v7) at (1, -1) {};
        \node[fill=black] (v8) at (2, 1) {};
        \node[fill=black] (v9) at (3, 1) {};
	\draw[black, thick] (v1) -- (v2);
        \draw[black, thick] (v2) -- (v3);
        \draw[black, thick] (v3) -- (v4);
        \draw[black, thick] (v4) -- (v5);
        \draw[black, thick] (v5) -- (v6);
        \draw[black, thick] (v6) -- (v7);
        \draw[black, thick] (v5) -- (v8);
        \draw[black, thick] (v8) -- (v9);
	\node[draw=none,fill=none] at (1, -2) {$B_4$};
\end{tikzpicture}
\hspace{0.4cm}
		\begin{tikzpicture}[scale=0.7, every node/.style={circle, fill, inner sep=1.2pt}]
	
	\node[fill=black] (v1)at(-1,-1) {};
	\node[fill=black] (v2) at (-1, 0) {};
	\node[fill=black] (v3) at (-1, 1) {};
	\node[fill=black] (v4) at (0, 1) {};
	\node[fill=black] (v5) at (1, 1) {};
	\node[fill=black] (v6) at (1, 0) {};
        \node[fill=black] (v7) at (1, -1) {};
        \node[fill=black] (v8) at (0, 0) {};
        \node[fill=black] (v9) at (0, -1) {};
	\draw[black, thick] (v1) -- (v2);
        \draw[black, thick] (v2) -- (v3);
        \draw[black, thick] (v3) -- (v4);
        \draw[black, thick] (v4) -- (v5);
        \draw[black, thick] (v2) -- (v8);
        \draw[black, thick] (v8) -- (v6);
        \draw[black, thick] (v1) -- (v9);
        \draw[black, thick] (v7) -- (v9);
	\node[draw=none,fill=none] at (0, -2) {$E_4$};
\end{tikzpicture}
\hspace{0.4cm}
		\begin{tikzpicture}[scale=0.7, every node/.style={circle, fill, inner sep=1.2pt}]
	
	\node[fill=black] (v1)at(-0.8,-1) {};
	\node[fill=black] (v2) at (-0.9, -0.5) {};
	\node[fill=black] (v3) at (-1, 0) {};
	\node[fill=black] (v4) at (-0.5, 0.5) {};
	\node[fill=black] (v5) at (0, 1) {};
	\node[fill=black] (v6) at (0.5, 0.5) {};
        \node[fill=black] (v7) at (1, 0) {};
        \node[fill=black] (v8) at (0.9, -0.5) {};
        \node[fill=black] (v9) at (0.8, -1) {};
        \node[fill=black] (v10) at (0, -1) {};
	\draw[black, thick] (v1) -- (v2);
        \draw[black, thick] (v2) -- (v3);
        \draw[black, thick] (v3) -- (v4);
        \draw[black, thick] (v4) -- (v5);
        \draw[black, thick] (v5) -- (v6);
        \draw[black, thick] (v6) -- (v7);
        \draw[black, thick] (v7) -- (v8);
        \draw[black, thick] (v8) -- (v9);
        \draw[black, thick] (v8) -- (v9);
        \draw[black, thick] (v10) -- (v9);
        \draw[black, thick] (v10) -- (v1);
	\node[draw=none,fill=none] at (0, -2) {$C_5$};
\end{tikzpicture}
\end{figure}


A \emphd{Berge path} of length $k$ in a hypergraph consists of $k$ hyperedges $h_1, h_2,...,h_k$ such that $\{v_i,v_{i+1}\}\subseteq h_i$ for all $i \in [k]$, where $v_1, v_2,...,v_{k+1}$ are $k+1$ distinct vertices. In~\cite{MR4835949}, Gy\H{o}ri et al. showed that an $n$-vertex linear 3-graph without a Berge path of length $k\ge 4$ as a subgraph has at most $\frac{k-1}{6}n$ edges. Note that there are more than one 3-uniform Berge Path with length $n$ and some of them is not acyclic. A \emphd{linear path} of length $k$, denoted by $P_k$, is the only acyclic Berge path, which has  $2k+1$ vertices.  In the same paper, Gy\H{o}ri et al. asked the linear Tur\'an number of {linear paths} with length larger than 4 (see~\Cref{conj1.1}, or Conjecture 3.4 in~\cite{MR4835949}).

\begin{conj}\label{conj1.1}
	Let $G$ be an $n$ vertex linear 3-graph, containing no linear
	path of length $k \ge 5$. Then the number of edges in $G$ is at most $\frac{k}{3}n+cn$, for some universal constant c.
\end{conj}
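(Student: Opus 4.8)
Since the statement is posed as a conjecture, my aim is to sketch a plausible route to the upper bound $ex_L(n,P_k)\le \frac{k}{3}n+cn$ rather than to claim a complete proof. I would argue by contraposition: assuming $G$ has more than $(\frac{k}{3}+c)n$ edges, equivalently average degree exceeding $k+3c$ (since $3|E|=\sum_v d(v)$), I would produce a linear path of length $k$. The backbone is an Erd\H{o}s--Gallai / Kopylov style analysis of a longest linear path, fed into a global peeling (degeneracy) scheme; the base of an induction on $k$ can be taken to be the sharp $P_5$ result of this paper, since $\frac{15}{11}<\frac53$.

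As a warm-up that already fixes the correct linear-in-$k$ shape, consider a longest linear path $P=e_1e_2\cdots e_\ell$ with $\ell\le k-1$, and let $u=v_{2\ell+1}$ be an endpoint. If some edge $\{u,x,y\}$ had both $x,y\notin V(P)$ we could extend $P$, contradicting maximality; hence every edge through $u$ has its other two vertices in $V(P)$. By linearity these vertex pairs are pairwise disjoint among the $2\ell$ vertices of $V(P)\setminus\{u\}$, so $d(u)\le\ell\le k-1$. The same argument applied to any subhypergraph shows that $G$ is $(k-1)$-degenerate, which yields the crude bound $ex_L(n,P_k)\le (k-1)n$. The entire difficulty is to replace the slope $k-1$ by $\frac{k}{3}$, a factor-$3$ improvement.

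To gain this factor I would refine the endpoint analysis using hypergraph rotations. Starting from $P$, whenever an edge joins an endpoint to an interior vertex one obtains, by a P\'osa-type rotation adapted to linear $3$-graphs, a new longest path with a different endpoint; iterating produces a set $S$ of ``reachable'' endpoints, each of degree at most $k-1$ and each with all incident edges trapped in $V(P)$. The plan is to show that the edges incident to $S$ are in fact confined to a bounded window of $P$ and span an induced linear $3$-graph on $O(k)$ vertices that is itself $P_k$-free; bounding its edge count (either by a self-similar/bootstrapping estimate against the inductive hypothesis or by direct structural analysis) should let each peeling round remove only about $\frac{k}{3}$ edges per deleted vertex on average, rather than $k-1$. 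The additive term $cn$ is present precisely to absorb small components and the $O(k)$ boundary effects of each peeling round.

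\textbf{The main obstacle} is exactly this amortization. The endpoint bound $d(u)\le k-1$ is tight for a single vertex, so the factor-$3$ saving cannot come from one endpoint; it must come from a global statement to the effect that a linear $3$-graph which is everywhere denser than $\frac{k}{3}$ contains \emph{many} long, overlapping linear paths, forcing one of them to reach length $k$. Making the rotation calculus precise under the linearity constraint -- so that rotated paths remain linear, and so that trapped edges really are confined to a short segment -- is the technical heart, and understanding how the extremal disjoint ``gadget'' constructions (which have average degree close to $k$ but much smaller minimum degree) interact with the peeling is what ultimately decides whether the clean constant $\frac{k}{3}$, as opposed to some larger absolute multiple of $k$, can be attained.
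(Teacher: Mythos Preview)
The paper does not prove Conjecture~1.1. It is quoted as an open problem of Gy\H{o}ri et al.\ (their Conjecture~3.4), and the paper's contribution is the single case $k=5$ (Theorem~\ref{main}), whose proof is the detailed structural analysis in Sections~3--4. Thus there is no ``paper's own proof'' of the conjecture to compare your proposal against; you are sketching an approach to a problem the paper leaves open.

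Two remarks on the sketch itself. First, your warm-up degeneracy step contains a slip: from ``if both $x,y\notin V(P)$ we could extend'' you may only conclude that \emph{at least one} of $x,y$ lies in $V(P)$, not both. The disjoint pairs $\{x,y\}$ then each meet $V(P)\setminus\{u\}$ in at least one (not two) vertex, giving $d(u)\le 2\ell\le 2(k-1)$ rather than $d(u)\le \ell$. This is harmless for the overall shape of the argument but worth correcting. Second, the paper records that a bound of the form $ex_L(n,P_k)\le 1.5kn$ is already known (Gy\'arf\'as et al.), so the ``linear-in-$k$ shape'' is not the issue; the entire content of the conjecture is pinning the slope down to $\tfrac13$, and your honest identification of the amortization step as the obstacle is exactly right. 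The paper's $k=5$ argument gives no hint of a general mechanism for this --- it is a bespoke small-case analysis --- so your rotation/peeling outline is a reasonable direction but, as you say, not a proof.
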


Note that the bound in~\Cref{conj1.1} is asymptotic sharp for infinitely many pairs of {$(n,k)$}. A \ti{maximal partial triple system}, $MPTS(m)$, is a linear triple system on m points whose triples cover the maximum number of pairs of points.  Note that the disjoint union of $MPTS(2k)$ does not contain a path of length $k$ (which has $2k+1$ vertices), and it has roughly $\frac{(k-1)}{3}n$. Particularly, if $m\equiv 1,3 \mod{6}$, it is known as Steiner Triple System, $STS(m)$, which has $\frac{(m-1)m}{6}$. It is also known that an $MPTS(m)$ has exactly $\frac{(m-1)m-8}{6}$ edges for $m\equiv 5 \mod{6}$, $\frac{(m-2)m}{6}$ edges for $m\equiv 0,2 \mod{6}$,  and $\frac{(m-2)m-2}{6}$ edges for $m\equiv 4 \mod{6}$.

The main result of this paper is to give the linear Turan number of linear path with length 5, for which the extremal graph is not disjoint union of maximal partial triple system $MPTS(10)$. The later graph has $\frac{13}{10}n$ edges.

\begin{thm}\label{main}
Let $G$ be an $n$ vertex linear 3-graph, containing no $P_5$. Then the number of edges in $G$ is at most $\frac{15}{11}n$, and the equality holds if and only if the graph is the disjoint union of $G_0$, a graph with 11 vertices and 15 edges as shown below.
\end{thm}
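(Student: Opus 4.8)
The plan is to argue by contradiction through a vertex-minimal counterexample, reduce to the connected case, and then run a density (discharging/deletion) argument driven by a structural analysis of a longest linear path.

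Let $G$ be a $P_5$-free linear $3$-graph on $n$ vertices that is a counterexample --- meaning $e(G)>\frac{15}{11}n$, or $e(G)=\frac{15}{11}n$ but $G$ is not a vertex-disjoint union of copies of $G_0$ --- with $n$ as small as possible. If $G$ is disconnected, then summing the bound over its components and using minimality shows each component satisfies $e\le\frac{15}{11}v$ with equality only for $G_0$, so $G$ itself is a disjoint union of copies of $G_0$, a contradiction; hence $G$ is connected. Since $1<\frac{15}{11}$, deleting a vertex of degree at most $1$ yields a smaller counterexample, so $G$ has minimum degree at least $2$. Finally $e(G)=\frac13\sum_{v}d_G(v)$, so if every degree were at most $4$ we would get $e(G)\le\frac{4}{3}n<\frac{15}{11}n$; thus $G$ has vertices of degree $\ge 5$, and the identity $\sum_v\bigl(d_G(v)-\frac{45}{11}\bigr)=3e(G)-\frac{45}{11}n\ge 0$ shows the surplus carried by the high-degree vertices must at least compensate the deficit $\frac{1}{11}$ (resp. $\frac{12}{11}$, $\frac{23}{11}$) of each vertex of degree $4$ (resp. $3$, $2$). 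So high-degree vertices are the only obstruction and they cannot be too isolated.

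The heart of the argument is a local analysis anchored on a \emph{longest} linear path $P=h_1h_2\cdots h_\ell$, where $\ell\le 4$ because $G$ is $P_5$-free. Writing $h_i=\{v_i,v_{i+1},w_i\}$, the key point is that any edge through an endpoint $v_1$ other than $h_1$ must have both of its remaining vertices inside a bounded set determined by $P$ and by the rotations of $P$ keeping $v_1$ fixed; otherwise $P$ extends or reroutes to a $P_5$. Applying this around all longest paths confines the degree-$\ge 5$ vertices, together with their neighbourhoods, to a bounded configuration $S$, and one shows that $S$ (or a suitable sub-configuration of it) satisfies $d(S):=|\{h\in E(G):h\cap S\ne\emptyset\}|\le\frac{15}{11}|S|$, usually with strict inequality. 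Since $G-S$ is again $P_5$-free, minimality gives $e(G)=e(G-S)+d(S)\le\frac{15}{11}(n-|S|)+\frac{15}{11}|S|=\frac{15}{11}n$, contradicting $e(G)>\frac{15}{11}n$ and settling the strict case. The same mechanism can be phrased as discharging: give each vertex $v$ the charge $\frac{d_G(v)}{3}$; only the degree-$\ge 5$ vertices exceed the target $\frac{15}{11}$, and the structural lemmas ensure each of them is incident to enough low-degree vertices through low-degree edges to shed its surplus without overloading any recipient, which bounds $e(G)$ by $\frac{15}{11}n$ and pins down the tight case.

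For the equality statement one reruns the analysis with every inequality an equality. The tightness forces, in a connected extremal $G$, a very rigid local picture --- essentially a single vertex $u$ of degree $5$ whose link is a perfect matching on the remaining vertices, sitting over a prescribed small linear configuration --- from which a short finite check gives $|V(G)|=11$ and $G\cong G_0$. It then only remains to verify directly that $G_0$ is a linear $P_5$-free $3$-graph with $11$ vertices and $15$ edges (this is a genuine check, since $P_5$ also has $11$ vertices), so that disjoint unions of copies of $G_0$ attain $\frac{15}{11}n$; combined with the uniqueness this gives equality exactly when $11\mid n$. The main obstacle is the structural case analysis behind the third paragraph: controlling how edges can attach near the ends of a longest path for each $\ell\in\{2,3,4\}$ and for each admissible pattern of degrees on $\{v_1,\dots,v_{\ell+1}\}$ and $\{w_1,\dots,w_\ell\}$, handling degree-$\ge 5$ vertices that lie far from every longest path, and --- the delicate part --- arranging the estimate $d(S)\le\frac{15}{11}|S|$ so that it is tight only for configurations that are honest pieces of $G_0$, so that the uniqueness of the extremal graph drops out of the same argument rather than needing a separate treatment.
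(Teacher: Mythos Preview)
Your opening reductions (minimal connected counterexample, $\delta(G)\ge 2$, $\Delta(G)\ge 5$, and the deletion inequality $|E(S)|\ge\frac{15}{11}|S|$ for every $S$) are exactly what the paper does. After that, however, the two arguments diverge, and your outline does not yet contain the idea that actually carries the proof.

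The paper does \emph{not} anchor the analysis on a longest path. Its engine is a single structural lemma: \emph{$G$ contains no two vertex-disjoint copies of $P_2$}. This is proved by a long but finite cascade of sub-cases (no $P_4/C_4$ disjoint from a $P_2$, then no $P_3/C_3$ disjoint from a $P_2$, then no $S_3$ or $P_2$ disjoint from a $P_2$), each time exhibiting a set $S$ with $|E(S)|<\frac{15}{11}|S|$. Once ``no two disjoint $P_2$'' is in hand, the rest is short: one looks at the link graph of an edge $e=\{a,b,c\}$, and the lemma translates into a combinatorial constraint on that $3$-edge-coloured graph strong enough to rule out degree list $(5,5,4)$ and to bound $\Delta\le 5$. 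The equality analysis is then a finite check on an $11$-vertex graph.

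Your plan --- rotate a longest path and confine the neighbourhood of its endpoints --- is a reasonable heuristic, but as written it is not a proof and it is not clear it can be made into one without essentially rediscovering the paper's lemma. The specific gaps are: (i) you never state, let alone prove, any concrete structural constraint that would force $d(S)\le\frac{15}{11}|S|$ for a nontrivial $S$; ``bounded configuration'' is doing all the work and is never made precise. (ii) You flag ``degree-$\ge 5$ vertices far from every longest path'' as a case but give no mechanism for it; in the paper this issue simply does not arise because ``no two disjoint $P_2$'' is a global statement. (iii) Your equality sketch (``essentially a single vertex of degree $5$'') understates the rigidity required: $G_0$ has \emph{six} vertices of degree $5$ and five of degree $3$, and the paper needs a genuine case analysis on the link of an edge (not a vertex) to isolate it. In short, the missing idea is the ``no two disjoint $P_2$'' lemma; without it, the longest-path rotations by themselves do not give enough leverage to reach the sharp constant $\frac{15}{11}$ or the uniqueness.
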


\begin{minipage}[htbp]{0.45\linewidth}
\begin{figure}[H]
	\centering
\begin{tikzpicture}[scale=0.7, every node/.style={circle, fill, inner sep=1.2pt}]
    \node[fill=red, label=above:$v_1$] (v1) at (-2, 3) {};
    \node[fill=blue, label=above:$v_2$] (v2) at (-1, 3) {};
    \node[fill=green, label=above:$v_3$] (v3) at (0, 3) {};
    \node[fill=yellow, label=above:$v_4$] (v4) at (1, 3) {};
    \node[fill=black, label=above:$v_5$] (v5) at (2, 3) {};

    \node[label=right:$u_1$] (u1) at (0:2cm) {};
    \node[label=right:$u_2$] (u2) at (60:2cm) {};
    \node[label=left:$u_3$] (u3) at (120:2cm) {};
    \node[label=left:$u_4$] (u4) at (180:2cm) {};
    \node[label=below:$u_5$] (u5) at (240:2cm) {};
    \node[label=below:$u_6$] (u6) at (300:2cm) {};

    \draw[red, thick] (u1) -- (u2);
    \draw[red, thick] (u3) -- (u5);
    \draw[red, thick] (u4) -- (u6);
    
    \draw[blue, thick] (u1) -- (u5);
    \draw[blue, thick] (u3) -- (u4);
    \draw[blue, thick] (u2) -- (u6);

    \draw[green, thick] (u1) -- (u3);
    \draw[green, thick] (u2) -- (u4);
    \draw[green, thick] (u5) -- (u6);

    \draw[yellow, thick] (u1) -- (u4);
    \draw[yellow, thick] (u2) -- (u5);
    \draw[yellow, thick] (u3) -- (u6);

    \draw[black, thick] (u1) -- (u6);
    \draw[black, thick] (u2) -- (u3);
    \draw[black, thick] (u4) -- (u5);
\end{tikzpicture}
\caption{extremal graph $G_0$}
\label{fextremal}
\end{figure}
\end{minipage}
\hfill
\begin{minipage}[htbp]{0.55\linewidth}
\begin{tabular}[t]{c|c}
\hline
Vertices &  $v_1,v_2,\ldots,v_5,u_1,u_2,\ldots,u_6$ \\
\hline
 & $\{v_1,u_1,u_2\},\{v_1,u_3,u_5\},\{v_1,u_4,u_6\},$ \\
 & $\{v_2,u_1,u_5\},\{v_2,u_2,u_6\},\{v_2,u_3,u_4\},$ \\
Edges & $\{v_3,u_1,u_3\},\{v_3,u_2,u_4\},\{v_3,u_5,u_6\},$ \\
 &   $\{v_4,u_1,u_4\},\{v_4,u_2,u_5\},\{v_4,u_3,u_6\},$ \\
 &   $\{v_5,u_1,u_6\},\{v_5,u_2,u_3\},\{v_5,u_4,u_5\}.$ \\
\hline
\end{tabular}
\end{minipage}

\section{Notations and Sketch of Proof}

Given a graph $G$, the neighborhood of $v$ in $G$ is denoted by $N_G(v)$, the degree of $v$ in $G$ is denoted by $d_G(v)$, the minimum degree of $G$ is denoted by $\delta(G)$, the subgraph induced by $S\subseteq V(G)$ is denoted by $G[S]$. For two set $A$ and $B$, we write $A-B$ for the set difference $A\setminus B$. For any integer $n$, we denote by $[n]$ the set of integers $\{1,2,\ldots,n\}$. For simplicity, if no ambiguity, we identify a graph $G$ with its vertex set, a single vertex set $\{v\}$ with the element $v$ and we omit a vertex pair $\{u,v\}$ as $uv$; a 3-hyperedge $\{u,v,w\}$ as $uvw$. 

A \emphd{linear path of length $k$}, denoted by $P_k$, is the graph with $2k+1$ distinct vertices $\{u_i\mid 1\le i \le 2k+1\}$ and edge set $E = \{e_i=u_{2j-1}u_{2j}u_{2j+1}\mid 1\le j \le k\}.$ We also denote it by the sequence of vertices $P_k=u_1u_2\ldots u_{2k+1}$ or by the sequence of edges $P_k=e_1,e_2,\ldots,e_k$.
Similarly, a \emphd{linear cycle of length $k$}, denoted by $C_k$, is the graph with $2k$ vertices $\{u_i\mid 1\le i \le 2k+1\}$ and edge set $E = \{u_{2j-1}u_{2j}u_{2j+1}\mid 1\le j \le k\}$, where $u_{2k+1}=u_1$. 
A \emphd{linear star with $k$ edges}, denoted by $S_k$, is the graph with $2k+1$ vertices $\{u\}\cup\{v_i\}_{1\le i \le 2k}$ and edge set $E = \{uv_{2j-1}v_{2j}\mid 1\le j \le k\}.$ A non-trivial star is a star of at least two edges. In this paper, when we talk about paths, cycles and stars, we refer to linear paths, cycles and stars.

Given a subset $S\subseteq V(G)$, let $E_i(S) = \{e\in E (G)||e\cap S|=i\},$ for $1\le i\le 3$, and $E(S) =E_1(S) \cup E_2(S)\cup E_3(S)$, the set of all the edges incident to vertices in $S$. Note that $E(v) = E_1(v) = d_G(v)$ for a single vertex $v$, and one should be careful with the difference between $E_3(S)$ and $E(S)$.

In the proof, we will encounter some small graphs with at most 4 edges. To describe them, we use the same notations of linear 3-graph introduced by Colbourn and Dinitz~\cite{ea299c3877e64a5b996676debaabb8d3} as listed in~\Cref{fnotation}, and change the name of some graphs to avoid confusion.

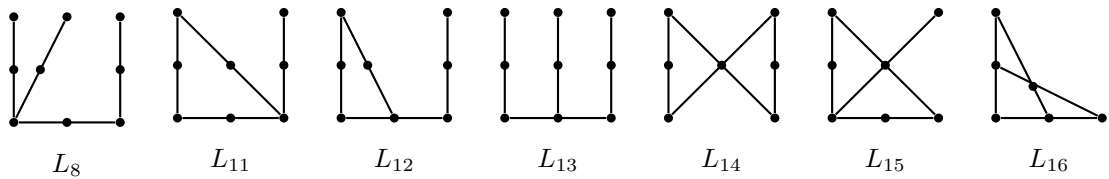
\begin{figure}[h]
	\centering
	\begin{tikzpicture}[scale=0.7, every node/.style={circle, fill, inner sep=1.2pt}]
		\node[fill=black] (v1) at (-1, 0) {};
		\node[fill=black] (v2) at (0, 0) {};
		\node[fill=black] (v3) at (1, 0) {};
		\node[fill=black] (v4) at (-1, 1) {};
		\node[fill=black] (v5) at (-1, 2) {};
		\node[fill=black] (v6) at (-0.5, 1) {};
		\node[fill=black] (v7) at (0, 2) {};
		\node[fill=black] (v8) at (1, 1) {};
		\node[fill=black] (v9) at (1, 2) {};
		
		\draw[black, thick] (v1) -- (v3);
		\draw[black, thick] (v1) -- (v5);
		\draw[black, thick] (v1) -- (v7);
		\draw[black, thick] (v3) -- (v9);
		\node[draw=none,fill=none] at (0, -0.8) {$L_8$};
	\end{tikzpicture}
	\hspace{0.4cm}
	\begin{tikzpicture}[scale=0.7, every node/.style={circle, fill, inner sep=1.2pt}]
		\node[fill=black] (v1) at (-1, 0) {};
		\node[fill=black] (v2) at (0, 0) {};
		\node[fill=black] (v3) at (1, 0) {};
		\node[fill=black] (v4) at (-1, 1) {};
		\node[fill=black] (v5) at (-1, 2) {};
		\node[fill=black] (v6) at (0, 1) {};
		\node[fill=black] (v8) at (1, 1) {};
		\node[fill=black] (v9) at (1, 2) {};
		
		\draw[black, thick] (v1) -- (v3);
		\draw[black, thick] (v1) -- (v5);
		\draw[black, thick] (v3) -- (v5);
		\draw[black, thick] (v3) -- (v9);
		\node[draw=none,fill=none] at (0, -0.8) {$L_{11}$};
	\end{tikzpicture}
	\hspace{0.4cm}
	\begin{tikzpicture}[scale=0.7, every node/.style={circle, fill, inner sep=1.2pt}]
		\node[fill=black] (v1) at (-1, 0) {};
		\node[fill=black] (v2) at (0, 0) {};
		\node[fill=black] (v3) at (1, 0) {};
		\node[fill=black] (v4) at (-1, 1) {};
		\node[fill=black] (v5) at (-1, 2) {};
		\node[fill=black] (v6) at (1, 1) {};
		\node[fill=black] (v7) at (1, 2) {};
		\node[fill=black] (v8) at (-0.5, 1) {};
		
		\draw[black, thick] (v1) -- (v3);
		\draw[black, thick] (v1) -- (v5);
		\draw[black, thick] (v3) -- (v7);
		\draw[black, thick] (v5) -- (v2);
		\node[draw=none,fill=none] at (0, -0.8) {$L_{12}$};
	\end{tikzpicture}
	\hspace{0.4cm}
	\begin{tikzpicture}[scale=0.7, every node/.style={circle, fill, inner sep=1.2pt}]
		\node[fill=black] (v1) at (-1, 0) {};
		\node[fill=black] (v2) at (0, 0) {};
		\node[fill=black] (v3) at (1, 0) {};
		\node[fill=black] (v4) at (-1, 1) {};
		\node[fill=black] (v5) at (-1, 2) {};
		\node[fill=black] (v6) at (0, 1) {};
		\node[fill=black] (v7) at (0, 2) {};
		\node[fill=black] (v8) at (1, 1) {};
		\node[fill=black] (v9) at (1, 2) {};
		
		\draw[black, thick] (v1) -- (v3);
		\draw[black, thick] (v1) -- (v5);
		\draw[black, thick] (v2) -- (v7);
		\draw[black, thick] (v3) -- (v9);
		\node[draw=none,fill=none] at (0, -0.8) {$L_{13}$};
	\end{tikzpicture}
	\hspace{0.4cm}
	\begin{tikzpicture}[scale=0.7, every node/.style={circle, fill, inner sep=1.2pt}]
		\node[fill=black] (v1) at (-1, 0) {};
		\node[fill=black] (v2) at (0, 1) {};
		\node[fill=black] (v3) at (1, 0) {};
		\node[fill=black] (v4) at (-1, 1) {};
		\node[fill=black] (v5) at (-1, 2) {};
		\node[fill=black] (v6) at (1, 1) {};
		\node[fill=black] (v7) at (1, 2) {};	
		
		\draw[black, thick] (v1) -- (v7);
		\draw[black, thick] (v1) -- (v5);
		\draw[black, thick] (v3) -- (v5);
		\draw[black, thick] (v3) -- (v7);
		\node[draw=none,fill=none] at (0, -0.8) {$L_{14}$};
	\end{tikzpicture}
	\hspace{0.4cm}
	\begin{tikzpicture}[scale=0.7, every node/.style={circle, fill, inner sep=1.2pt}]
		\node[fill=black] (v1) at (-1, 0) {};
		\node[fill=black] (v2) at (0, 0) {};
		\node[fill=black] (v3) at (1, 0) {};
		\node[fill=black] (v4) at (-1, 1) {};
		\node[fill=black] (v5) at (-1, 2) {};
		\node[fill=black] (v6) at (0, 1) {};
		\node[fill=black] (v9) at (1, 2) {};
		
		\draw[black, thick] (v1) -- (v3);
		\draw[black, thick] (v3) -- (v5);
		\draw[black, thick] (v1) -- (v9);
		\draw[black, thick] (v1) -- (v5);
		\node[draw=none,fill=none] at (0, -0.8) {$L_{15}$};
	\end{tikzpicture}
	\hspace{0.4cm}
	\begin{tikzpicture}[scale=0.7, every node/.style={circle, fill, inner sep=1.2pt}]
		\node[fill=black] (v1) at (-1, 0) {};
		\node[fill=black] (v2) at (0, 0) {};
		\node[fill=black] (v3) at (1, 0) {};
		\node[fill=black] (v4) at (-1, 1) {};
		\node[fill=black] (v5) at (-1, 2) {};
		\node[fill=black] (v6) at (-0.3, 0.6) {};
		
		\draw[black, thick] (v1) -- (v3);
		\draw[black, thick] (v1) -- (v5);
		\draw[black, thick] (v3) -- (v4);
		\draw[black, thick] (v2) -- (v5);
		\node[draw=none,fill=none] at (0, -0.8) {$L_{16}$};
	\end{tikzpicture}
	\caption{Notations for small linear 3-graphs}
	\label{fnotation}
\end{figure}

Now we give a sketch of the proof of~\Cref{main}. Note that we just need to consider the case that $G$ is connected. Among the connected linear 3-graphs that do not contain $P_5$ as a subgraph, if there exist a graph $G$ with more than $15n(G)/11$ edges, let $G$ be the one with the minimum number of vertices. Otherwise, let $G$ be one with exactly $15n(G)/11$ edges. We are going to prove that $G$ must be $G_0$. Note that if $n<11$, we always have $E (G)<15n/11$ (See Steiner triple systems in~\cite{ea299c3877e64a5b996676debaabb8d3}), a contradiction. So we can assume $n\geq 11$.

We start with a lemma which we will be used repeatedly in the future proof.

\begin{lem}\label{contradiction}
    For any vertex set $S\subseteq G$, $|E(S)| \ge \frac{15}{11}|S|$. In particular, $\delta(G)\ge 2$.
\end{lem}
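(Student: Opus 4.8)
The plan is to derive the inequality directly from the minimality (resp. extremality) of $G$ fixed in the proof sketch, by vertex deletion. Suppose toward a contradiction that some $S\subseteq V(G)$ satisfies $|E(S)|<\frac{15}{11}|S|$. Then $S\neq\emptyset$ (otherwise the inequality would read $0<0$), and $S\neq V(G)$, since $|E(V(G))|=|E(G)|\ge \frac{15}{11}n=\frac{15}{11}|V(G)|$ by the choice of $G$. Let $G':=G[V(G)-S]$ be the sub-$3$-graph induced on $V(G)-S$ (equivalently, $G$ with all vertices of $S$ and every edge meeting $S$ deleted); then $n':=|V(G')|=n-|S|$ satisfies $1\le n'<n$, and $|E(G')|=|E(G)|-|E(S)|$.

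Since in both cases of the sketch $G$ satisfies $|E(G)|\ge\frac{15}{11}n$, we obtain
\[
|E(G')|=|E(G)|-|E(S)|>\frac{15}{11}n-\frac{15}{11}|S|=\frac{15}{11}\,n'>0 .
\]
Averaging the edge counts over the connected components of $G'$, at least one component $C$ has $|E(C)|>\frac{15}{11}|V(C)|$. Such a $C$ is a connected linear $3$-graph, it contains no $P_5$ (being a subgraph of the $P_5$-free graph $G$), and $|V(C)|\le n'<n$. In the first case of the sketch this contradicts the choice of $G$ as a connected $P_5$-free linear $3$-graph with the fewest vertices among those with more than $\frac{15}{11}$ times as many edges as vertices; in the second case no connected $P_5$-free linear $3$-graph with more than $\frac{15}{11}$ times its number of vertices many edges exists at all, again a contradiction. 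Hence $|E(S)|\ge\frac{15}{11}|S|$ for every $S\subseteq V(G)$. Applying this with $S=\{v\}$ and using $|E(\{v\})|=d_G(v)$ yields $d_G(v)\ge\frac{15}{11}>1$, so $d_G(v)\ge 2$ for every vertex $v$; that is, $\delta(G)\ge 2$.

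This is a routine deletion/minimality argument, so I do not anticipate a genuine obstacle. The only points requiring a little care are handling the two cases in the definition of $G$ uniformly through the single inequality $|E(G)|\ge\frac{15}{11}n$, and the passage from $G'$ to one of its connected components — which is legitimate precisely because the edge-to-vertex ratio of $G'$ strictly exceeds $\frac{15}{11}$, forcing the same for some component, which is then a strictly smaller instance violating the extremal choice of $G$.
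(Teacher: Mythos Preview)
Your proof is correct and follows essentially the same deletion/minimality argument as the paper. The only difference is that you explicitly pass from $G'$ to one of its connected components by averaging, whereas the paper glosses over this step; since $G$ is assumed connected in the setup, your extra care here is appropriate and the argument is otherwise identical.
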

\begin{proof}If $S = V(G)$, then it is clear by definition of $G$. Otherwise, we delete $S$ from $G$, thereby deleting $|S|$ vertices and less than $15|S|/11$ edges, obtaining a smaller counterexample with number of edges strictly larger than $15n/11$, which is not isomorphic to $G_0$, contradicting the minimality of $G$.
\end{proof}

The rest of the proof is divided into 2 parts. 
In~\Cref{section3}, we prove that there are no two disjoint $P_2$ in $G$. To prove it, we show that if $G$ has two specified disjoint structures but does not contain $P_5$, then we can find some vertex set $S$ with not many edges incident to it, which contradict to Lemma~\Cref{contradiction}. 
Base on this, in~\Cref{Section4}, by considering the ``link graph'' of an edge, we prove the main theorem. 

\section{The Case With Two Disjoint $P_2$}\label{section3}

In this section, we prove the following lemma.

\begin{lem}\label{S2S2}
    There are no two disjoint $P_2$ in $G$.
\end{lem}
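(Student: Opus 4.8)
The plan is a proof by contradiction. Suppose $A=a_1a_2a_3a_4a_5$ and $B=b_1b_2b_3b_4b_5$ are vertex-disjoint copies of $P_2$ in $G$, with edges $a_1a_2a_3, a_3a_4a_5$ and $b_1b_2b_3, b_3b_4b_5$. Since $G$ is connected there is a sequence of distinct edges $f_1,\ldots,f_t$, none of them an edge of $A$ or $B$, with $f_i\cap f_{i+1}\neq\emptyset$, $f_1\cap V(A)\neq\emptyset$ and $f_t\cap V(B)\neq\emptyset$; I would choose the pair $A,B$ together with such a sequence so that $t$ is minimum. Minimality then forces each $f_i$ $(i\ge 2)$ to avoid $V(A)$ and each $f_i$ $(i\le t-1)$ to avoid $V(B)$, consecutive edges $f_i,f_{i+1}$ to meet in exactly one vertex (automatic from linearity), and non-consecutive ones to be disjoint, since any violation would yield a shorter sequence. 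The whole argument rests on the following dichotomy: either one can splice $A$, the sequence $f_1,\ldots,f_t$, and $B$ together --- re-routing through the two edges of $A$ and of $B$, and, when a path terminates at a loose endpoint, appending one further edge, which exists because $\delta(G)\ge 2$ by \Cref{contradiction} --- into a linear path with at least $5$ edges, contradicting that $G$ is $P_5$-free; or the splicing is obstructed, and then all edges responsible for the obstruction are confined to a bounded vertex set $S$ (essentially $V(A)\cup V(B)$ together with the $O(1)$ vertices carrying $f_1,\ldots,f_t$ and the $O(1)$ vertices blocking each attempted extension), so that $|E(S)|<\tfrac{15}{11}|S|$, again contradicting \Cref{contradiction}.

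Concretely, I would first dispose of the case $t=1$: a single edge $e=\{x,y,z\}$ with $x\in V(A)$ and $y\in V(B)$. Then $a_1a_2a_3,\, a_3a_4a_5,\, e,\, b_1b_2b_3,\, b_3b_4b_5$ is a candidate $P_5$, and it is a genuine $P_5$ unless $z\in V(A)\cup V(B)$, or $x=a_3$, or $y=b_3$. In each of these finitely many degenerate placements of $e$ inside the $10$-element set $V(A)\cup V(B)$ one still extracts a linear path with $4$ edges ending at some loose vertex $w$; then either an edge at $w$ --- which exists since $d_G(w)\ge 2$ --- having two vertices off the path completes a $P_5$, or every edge at $w$ leads back into the set already built, and then, taking $S=V(A)\cup V(B)\cup\{z,w\}$ (enlarged by one more absorbed vertex if needed) and using linearity to forbid new edges on the pairs of $S$ already covered by the listed hyperedges, one verifies $|E(S)|<\tfrac{15}{11}|S|$. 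The case $t\ge 2$ is similar and easier: the shortest sequence $f_1,\ldots,f_t$ together with $A$ and $B$ yields a linear path of length at least $t+4\ge 6$ unless $f_1$ or $f_t$ attaches at a middle vertex $a_3$ or $b_3$ or some $f_i$ re-enters $V(A)\cup V(B)$; minimality of $t$ and linearity exclude these, save for a few local pictures handled exactly as above.

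The step I expect to be the main obstacle is the bookkeeping in the degenerate cases: when the obvious path fails one must show that the offending edges are genuinely trapped in a small set, not merely that one particular routing fails, which requires ruling out every alternative routing through the available edges and then pinning down, via $\delta(G)\ge 2$ and linearity, exactly which further edges can touch $S$. The slack in \Cref{contradiction} is tight --- $\tfrac{15}{11}\cdot 10<14$, $\tfrac{15}{11}\cdot 11<16$, $\tfrac{15}{11}\cdot 12<17$ --- so each configuration must be counted carefully, maintaining a running list of the pairs inside $V(A)\cup V(B)$ already covered by hyperedges in hand and hence, by linearity, unavailable for new edges. The small-$n$ boundary ($n=11$, with a single vertex outside $V(A)\cup V(B)$) is routine and falls out of the same analysis once one notes that this lone vertex still needs degree at least $2$.
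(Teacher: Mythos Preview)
Your outline captures the right \emph{intuition}---two disjoint $P_2$'s plus a connecting route should assemble into a $P_5$---but the ``degenerate cases'' you defer to bookkeeping are the entire content of the lemma, and your plan for them does not close.

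Two concrete gaps:

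\textbf{(1) You do not always get a $P_4$.} Take the degenerate case $x=a_3$, $y=b_3$, $z\notin V(A)\cup V(B)$. Both edges of $A$ meet at $a_3$ and both edges of $B$ meet at $b_3$, so any linear path through $e=a_3zb_3$ can use at most one edge of $A$ and one of $B$; you get only a $P_3$, not a $P_4$. Extending to $P_5$ now needs \emph{two} further edges, each with two fresh vertices, and nothing in your setup forces these to exist.

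\textbf{(2) Your count of $|E(S)|$ is not a count.} You propose to take $S=V(A)\cup V(B)\cup\{z,w,\ldots\}$ and ``use linearity to forbid new edges on the pairs of $S$ already covered.'' But linearity only bounds $|E_3(S)|$, the edges lying entirely inside $S$. The quantity $|E(S)|$ in \Cref{contradiction} also counts $E_1(S)$ and $E_2(S)$---edges with one or two vertices \emph{outside} $S$---and these are not restricted by which pairs inside $S$ are covered. Your argument controls the edges at the single vertex $w$ where the extension was blocked, but says nothing about edges at $a_2,a_4,a_5,b_2,\ldots$, each of which has degree at least $2$ and may contribute edges leaving $S$. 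With $|S|\approx 11$ you need $|E(S)|\le 15$, yet $|E_3(S)|$ alone can already approach this, so even one or two uncontrolled edges in $E_1(S)\cup E_2(S)$ kill the inequality. To bound those, you would have to show that any such edge itself produces a $P_5$---which is exactly the splicing you were unable to complete, so the argument is circular.

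The paper avoids this circularity by a genuinely different route: it introduces the notions of \emph{$2$-center} and \emph{center pair}, proves a structural lemma (\Cref{conpath}) saying that any short connection between two disjoint non-star subgraphs must land on such special vertices, establishes $2$-connectivity, and then excludes disjoint pairs in decreasing order of size ($P_4/C_4$ with $P_2$, then components strictly containing $P_3$, then $P_3/C_3$ with $P_2$, finally $S_3/P_2$ with $P_2$). At each stage the earlier exclusions are what force $E_1(S)$ and $E_2(S)$ to be small enough for the count to go through. Your single-shot splicing argument has no substitute for this hierarchy.
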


We begin with some preparations. 
\begin{defi}
    Let $U$ be a subgraph of $G$.
    \begin{enumerate}
        \item A vertex $u_0$ is called a \emphd{$k$-center} of $U$ if there is no path starting from $u_0$ with length larger than $k$.
        \item A pair of vertices $\{u_1,u_2\}\subseteq U$ is called a \emphd{center pair} of $U$ if they are not adjacent in $U$ and any edge in $U$ is incident to one of them.
        \item The maximum number of \emphd{disjoint} center pairs in $U$ is denoted by $\gamma(U)$.
    \end{enumerate}   
     Note that only star has 1-center, which is its center. We always abbreviate 1-center to \emphd{center}. 
\end{defi}

\begin{exmp}\label{exmps}
	A single edge is a graph with three 1-centers and no center pairs; $S_3$ is a graph with no center pairs and seven 2-centers; $C_4$ is a graph with one center pair and no 2-centers; $P_4$ is a graph with one 2-center and one center pair (See~\Cref{fsp}, the red vertices are 2-centers and the red pairs are center pairs). 
\end{exmp}
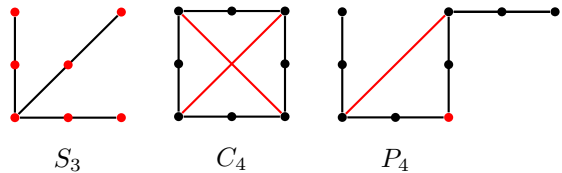
\begin{figure}[h]
	\centering
	\begin{tikzpicture}[scale=0.7, every node/.style={circle, fill, inner sep=1.2pt}]
		\node[fill=red] (v1) at (-1, 0) {};
		\node[fill=red] (v3) at (1, 0) {};
		\node[fill=red] (v5) at (-1, 2) {};
		\node[fill=red] (v7) at (1, 2) {};
		\draw[black, thick] (v1) -- (v3);
		\draw[black, thick] (v1) -- (v5);
		\draw[black, thick] (v1) -- (v7);
        \node[fill=red] (v2) at (0, 0) {};
        \node[fill=red] (v6) at (0, 1) {};
		\node[fill=red] (v4) at (-1, 1) {};
		\node[draw=none,fill=none] at (0, -0.8) {$S_3$};
	\end{tikzpicture}
	\hspace{0.4cm}
	\begin{tikzpicture}[scale=0.7, every node/.style={circle, fill, inner sep=1.2pt}]
	
	\node[fill=black] (v1) at (-1, 0) {};
	\node[fill=black] (v2) at (0, 0) {};
	\node[fill=black] (v3) at (1, 0) {};
	\node[fill=black] (v4) at (-1, 1) {};
	\node[fill=black] (v5) at (-1, 2) {};
	\node[fill=black] (v6) at (1, 1) {};
	\node[fill=black] (v7) at (1, 2) {};
	\node[fill=black] (v8) at (0, 2) {};
	
	\draw[black, thick] (v1) -- (v3);
	\draw[black, thick] (v7) -- (v5);
	\draw[black, thick] (v3) -- (v7);
	\draw[black, thick] (v1) -- (v5);
	\draw[red, thick] (v3) -- (v5);
	\draw[red, thick] (v1) -- (v7);
	\node[draw=none,fill=none] at (0, -0.8) {$C_4$};
\end{tikzpicture}
	\hspace{0.4cm}
	\begin{tikzpicture}[scale=0.7, every node/.style={circle, fill, inner sep=1.2pt}]
		\node[fill=black] (v1) at (-1, 0) {};
		\node[fill=black] (v2) at (0, 0) {};
		\node[fill=red] (v3) at (1, 0) {};
		\node[fill=black] (v4) at (-1, 1) {};
		\node[fill=black] (v5) at (-1, 2) {};
		\node[fill=black] (v6) at (1, 1) {};
		\node[fill=black] (v7) at (1, 2) {};
		\node[fill=black] (v8) at (2, 2) {};
		\node[fill=black] (v9) at (3, 2) {};
		
		\draw[black, thick] (v1) -- (v3);
		\draw[black, thick] (v9) -- (v7);
		\draw[black, thick] (v3) -- (v7);
		\draw[black, thick] (v1) -- (v5);
		\draw[black, thick] (v7) -- (v9);
		\draw[red, thick] (v1) -- (v7);
		\node[draw=none,fill=none] at (0, -0.8) {$P_4$};
	\end{tikzpicture}
	\caption{$S_3$ and $P_4$}
	\label{fsp}
\end{figure}
\begin{prop}\label{gammaUleq3}
	If $U$ is a connected graph, then $\gamma(U)\le 3$. There are two connected subgraphs for which $\gamma(U)=3$ and three subgraphs with $\gamma(U)=2$.
\end{prop}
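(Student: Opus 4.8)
The plan is to prove the two assertions by rather different means: the inequality $\gamma(U)\le 3$ is a one-line pigeonhole argument, while the classification rests on a structural observation that reduces everything to a finite case check. For the bound, let $\{a_1,b_1\},\dots,\{a_k,b_k\}$ be pairwise disjoint center pairs of the connected graph $U$. If $U$ has no edge then $U$ is a single vertex and $\gamma(U)=0$; otherwise fix an edge $e$. Since every edge of $U$ is incident to each center pair, $e\cap\{a_i,b_i\}\ne\emptyset$ for every $i$, and as the pairs are pairwise disjoint these are $k$ pairwise disjoint nonempty subsets of the three-element set $e$, so $k\le 3$.

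For the case $\gamma(U)=3$, the same count upgrades: if $\{a_i,b_i\}_{i\le 3}$ are three disjoint center pairs then each edge contains exactly one of $a_i,b_i$ for every $i$. Hence all edges live inside the six vertices $\{a_i,b_i\}$, so by connectedness $V(U)$ equals this set and every edge is a transversal of the three pairs. Recording a transversal by the word in $\{0,1\}^3$ that specifies which of $a_i,b_i$ it uses, linearity is exactly the statement that no two of these words differ in a single coordinate, i.e.\ the edge set is a binary code of length $3$ and minimum distance at least $2$. Modulo flipping $a_i\leftrightarrow b_i$ and permuting coordinates, the only such codes whose hypergraph is connected and spans all six vertices are the three-transversal one --- a linear triangle $C_3$ --- and the four even-weight transversals --- a configuration of four triples on six points with every point in two triples and any two triples meeting in a single point. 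This gives the two connected graphs with $\gamma(U)=3$.

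For the case $\gamma(U)=2$, disjoint center pairs $\{a_1,b_1\},\{a_2,b_2\}$ force each edge to have the form $\{x_1,x_2,y\}$ with $x_i\in\{a_i,b_i\}$ and $y\notin\{a_1,b_1,a_2,b_2\}$ (no edge contains both $a_i$ and $b_i$, since they are non-adjacent), and linearity allows at most one edge for each of the four choices of $(x_1,x_2)$; hence $2\le|E(U)|\le 4$. I would then run a case analysis on $|E(U)|$ and on which of the free vertices coincide, using linearity to forbid two edges sharing both a paired and a free vertex, discarding anything with an isolated vertex, and reducing the survivors up to isomorphism. The one point to watch is that some coincidences of free vertices create a third disjoint center pair and hence belong on the $\gamma=3$ list, so they must be recognized and set aside here. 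The surviving connected graphs with $\gamma(U)=2$ come out to be $P_3$, $C_4$ and $L_{14}$ (with the two-edge subcase giving $P_2$). Finally, for each graph on the two lists one checks that $\gamma$ equals the stated value and is not larger; for the $\gamma=2$ graphs this is immediate because $\gamma(U)=3$ forces $|V(U)|=6$ by the transversal structure above, while none of them has six vertices.

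The step I expect to be the real work is the $\gamma(U)=2$ enumeration: laying out the subcases (by number of edges and by which free vertices coincide) so that none is missed or counted twice, and correctly handling the isomorphisms together with the accidental third center pairs that certain coincidences produce. By comparison, the $\gamma(U)=3$ case becomes routine once recast in coding-theoretic language, and the bound $\gamma(U)\le 3$ is immediate.
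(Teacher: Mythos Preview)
Your argument for the bound $\gamma(U)\le 3$ is exactly the paper's: any edge meets each of the $k$ disjoint center pairs, giving $k$ disjoint nonempty subsets of a $3$-set. For the classification, however, your route is genuinely different. The paper simply observes that $\gamma(U)\ge 2$ forces $|E(U)|\le 4$ and then appeals to the Colbourn--Dinitz catalogue of small linear triple systems to read off the answer. You instead give a self-contained structural analysis: for $\gamma=3$ you parametrise edges as transversals of the three pairs and reduce to binary codes of length $3$ with minimum distance $2$, correctly obtaining $C_3$ and the Pasch configuration (which is the paper's $L_{16}$); for $\gamma=2$ you show every edge has the shape $\{x_1,x_2,y\}$ with $x_i\in\{a_i,b_i\}$, bound the edge count by $4$, and run the case split on $|E(U)|$ and on coincidences among the free vertices. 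Your approach has the advantage of not depending on an external table.

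There is one real issue: your enumeration for $\gamma=2$ produces \emph{four} graphs ($P_2$, $P_3$, $L_{14}$, $C_4$), not three, so your proof does not establish the proposition as stated. In fact your count is correct and the paper's is not: the linear path $P_3=u_1u_2u_3,u_3u_4u_5,u_5u_6u_7$ has the two disjoint center pairs $\{u_3,u_6\}$ and $\{u_1,u_5\}$, while all its center pairs contain $u_3$ or $u_5$, so $\gamma(P_3)=2$. The paper's figure listing ``all graphs with $\gamma=2,3$'' omits $P_3$. This does not damage anything downstream in the paper (the proposition is only used to say that $\gamma\ge 2$ forces $|E|/|V|<15/11$, and $P_3$ satisfies this too), but you should flag the discrepancy explicitly rather than silently list four graphs against a statement that claims three.
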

\begin{proof}
     Every edge can intersect at most three disjoint center pairs, so $\gamma(U)\le 3$.
     If $\gamma(U)\geq 2$, then it has at most 4 edges as it is a linear graph. Combined with the list of all 3-graph with less than four edges in~\cite{ea299c3877e64a5b996676debaabb8d3}, it's not hard to draw all graphs with $\gamma(U)=2$ or $3$ as in~\Cref{fgamma}, where the red pairs are center pairs.
\end{proof}
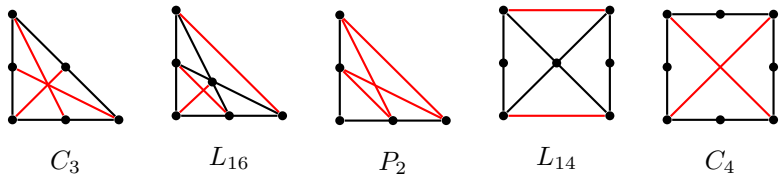
\begin{figure}[h]
	\centering
	\begin{tikzpicture}[scale=0.7, every node/.style={circle, fill, inner sep=1.2pt}]
		\node[fill=black] (v1) at (-1, 0) {};
		\node[fill=black] (v2) at (0, 0) {};
		\node[fill=black] (v3) at (1, 0) {};
		\node[fill=black] (v4) at (-1, 1) {};
		\node[fill=black] (v5) at (-1, 2) {};
		\node[fill=black] (v6) at (0, 1) {};
		
		\draw[black, thick] (v1) -- (v3);
		\draw[black, thick] (v1) -- (v5);
		\draw[black, thick] (v3) -- (v5);
		\draw[red, thick] (v1) -- (v6);
		\draw[red, thick] (v3) -- (v4);
		\draw[red, thick] (v2) -- (v5);
		\node[draw=none,fill=none] at (0, -0.8) {$C_3$};
	\end{tikzpicture}
	\hspace{0.4cm}
	\begin{tikzpicture}[scale=0.7, every node/.style={circle, fill, inner sep=1.2pt}]
		\node[fill=black] (v1) at (-1, 0) {};
		\node[fill=black] (v2) at (0, 0) {};
		\node[fill=black] (v3) at (1, 0) {};
		\node[fill=black] (v4) at (-1, 1) {};
		\node[fill=black] (v5) at (-1, 2) {};
		\node[fill=black] (v6) at (-0.32, 0.64) {};

		\draw[black, thick] (v1) -- (v3);
		\draw[black, thick] (v1) -- (v5);
		\draw[black, thick] (v3) -- (v4);
		\draw[black, thick] (v2) -- (v5);
		\draw[red, thick] (v1) -- (v6);
		\draw[red, thick] (v2) -- (v4);
		\draw[red, thick] (v3) -- (v5);
		\node[draw=none,fill=none] at (0, -0.8) {$L_{16}$};
	\end{tikzpicture}
	\hspace{0.4cm}
	\begin{tikzpicture}[scale=0.7, every node/.style={circle, fill, inner sep=1.2pt}]
		\node[fill=black] (v1) at (-1, 0) {};
		\node[fill=black] (v2) at (0, 0) {};
		\node[fill=black] (v3) at (1, 0) {};
		\node[fill=black] (v4) at (-1, 1) {};
		\node[fill=black] (v5) at (-1, 2) {};
		
		\draw[black, thick] (v1) -- (v3);
		\draw[black, thick] (v1) -- (v5);
		\draw[red, thick] (v3) -- (v5);
		\draw[red, thick] (v3) -- (v4);
		\draw[red, thick] (v2) -- (v5);
		\draw[red, thick] (v2) -- (v4);
		\node[draw=none,fill=none] at (0, -0.8) {$P_{2}$};
	\end{tikzpicture}
	\hspace{0.4cm}
	\begin{tikzpicture}[scale=0.7, every node/.style={circle, fill, inner sep=1.2pt}]
		\node[fill=black] (v1) at (-1, 0) {};
		\node[fill=black] (v2) at (0, 1) {};
		\node[fill=black] (v3) at (1, 0) {};
		\node[fill=black] (v4) at (-1, 1) {};
		\node[fill=black] (v5) at (-1, 2) {};
		\node[fill=black] (v6) at (1, 1) {};
		\node[fill=black] (v7) at (1, 2) {};	
		
		\draw[black, thick] (v1) -- (v7);
		\draw[black, thick] (v1) -- (v5);
		\draw[black, thick] (v3) -- (v5);
		\draw[black, thick] (v3) -- (v7);
		\draw[red, thick] (v1) -- (v3);
		\draw[red, thick] (v5) -- (v7);
		\node[draw=none,fill=none] at (0, -0.8) {$L_{14}$};

	\end{tikzpicture}
	\hspace{0.4cm}
	\begin{tikzpicture}[scale=0.7, every node/.style={circle, fill, inner sep=1.2pt}]

		\node[fill=black] (v1) at (-1, 0) {};
		\node[fill=black] (v2) at (0, 0) {};
		\node[fill=black] (v3) at (1, 0) {};
		\node[fill=black] (v4) at (-1, 1) {};
		\node[fill=black] (v5) at (-1, 2) {};
		\node[fill=black] (v6) at (1, 1) {};
		\node[fill=black] (v7) at (1, 2) {};
		\node[fill=black] (v8) at (0, 2) {};
		
		\draw[black, thick] (v1) -- (v3);
		\draw[black, thick] (v7) -- (v5);
		\draw[black, thick] (v3) -- (v7);
		\draw[black, thick] (v1) -- (v5);
		\draw[red, thick] (v3) -- (v5);
		\draw[red, thick] (v1) -- (v7);
		\node[draw=none,fill=none] at (0, -0.8) {$C_4$};
	\end{tikzpicture}
	\caption{All graphs with $\gamma=2,3$}
	\label{fgamma}
\end{figure}
\begin{defi}
    For three disjoint vertex subsets $U, V, W$ of $G$, we say a path $P$ connects $U$ and $V$ (by $W$), if $P = v_1v_2\ldots v_{2k+1}$ is a path of length $k$ in $G$, such that $\{v_1,\ldots,v_i\}\subseteq U$ with $i=1$ or $2$, $\{v_{j},\ldots,v_{2k+1}\}\subseteq V$ with $j=2k$ or $2k+1$, and $W=\{v_{i+1},\ldots,v_{j-1}\}$ ($W$ can be $\emptyset$). Similarly we define an edge $e$ connects $U$ and $V$, by considering it as a path of length 1.
\end{defi}

\begin{lem}\label{conpath}
    Let $U,V$ be two disjoint connected subgraphs of $G$, and $P$ is a path connecting $U$ and $V$ which does not contain a center of $U$ or $V$. Then one of the following holds.
    \begin{enumerate}
    \item $P$ is an edge and consists of a 2-center of one of $U,V$ and a center pair of the other.
    \item $P$ is a path of length 2 and $P\cap U, P\cap V$ are center pairs of $U,V$ respectively.
    \end{enumerate}
\end{lem}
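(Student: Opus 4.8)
The plan is to classify $P$ according to how many of its endpoints lie in $U$ and how many in $V$. Write $P=v_1v_2\cdots v_{2k+1}$ with $P\cap U=\{v_1,\dots,v_i\}$, $P\cap V=\{v_j,\dots,v_{2k+1}\}$ and $W=\{v_{i+1},\dots,v_{j-1}\}$ as in the definition of ``connecting'', so $i\in\{1,2\}$ and $2k+2-j\in\{1,2\}$, giving four cases. Two elementary facts will be used throughout. First, every vertex of $P$ lying in $U$ (resp.\ $V$) is by hypothesis not a center of that subgraph, hence lies in some edge of it and is the start of a path of length at least $2$ inside it. Second, if $P\cap U=\{v_1,v_2\}$ then $v_1v_2$ is a non-adjacent pair of $U$: the unique edge of $G$ through $v_1v_2$ is $v_1v_2v_3$, and $v_3\notin U$, so it is not an edge of $U$; likewise for $P\cap V$.

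The main tool is concatenation. Suppose $Q$ is a path inside $U$ ending at $v_1$, avoiding $v_2$ when $P\cap U=\{v_1,v_2\}$, and $R$ is a path inside $V$ starting at $v_{2k+1}$, avoiding $v_{2k}$ when $P\cap V$ is a pair. Then $Q$, $P$, $R$ concatenate into a single path of $G$ whose length is the sum of the three lengths: their vertex sets overlap only at the gluing vertices because $U,V,W$ are pairwise disjoint, and two consecutive edges meet in exactly one vertex at an odd position because $G$ is linear and the edge of $Q$ at $v_1$ cannot contain $v_2$ (and similarly at the $V$ end). Since $G$ is $P_5$-free, any such concatenation has length at most $4$. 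Consequently: if $P\cap U$ and $P\cap V$ are both single vertices, taking $Q,R$ of length $2$ gives length $\ge 5$, impossible; if exactly one of them, say $P\cap U=\{v_1\}$, is a single vertex, take $Q$ of length $2$ and $R$ a single edge of $V$ at $v_{2k+1}$ (which exists and automatically avoids $v_{2k}$), so $2+k+1\le 4$ forces $k=1$ and $P$ is an edge; if both are pairs, take single edges of $U$ at $v_1$ and of $V$ at $v_{2k+1}$, so $1+k+1\le 4$, and since an edge cannot be partitioned into two disjoint pairs we get $k=2$, i.e.\ $P=v_1v_2v_3v_4v_5$ with $\{v_1,v_2\}\subseteq U$, $\{v_4,v_5\}\subseteq V$, $v_3\in W$.

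What remains is the technical heart: upgrading ``non-adjacent pair'' to ``center pair'' and, in the edge case, checking the $2$-center condition. Suppose first $P=v_1v_2v_3$ with $v_1\in U$ and $\{v_2,v_3\}\subseteq V$. If $v_1$ started a path of length $3$ in $U$, joining it to $v_1v_2v_3$ and then one edge of $V$ at $v_3$ would be a $P_5$; so $v_1$ is a $2$-center of $U$. Next, if $v_2v_3$ were not a center pair of $V$, some edge of $V$ misses both $v_2$ and $v_3$; since $V$ is connected there are adjacent edges $h,g'$ of $V$ with $g'$ incident to neither $v_2$ nor $v_3$ and $h$ incident to one of them, say to $v_2$; linearity forces $h$ and $g'$ to share exactly one vertex, which produces a path of length $2$ inside $V$ from $v_2$ avoiding $v_3$. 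Joining a length-$2$ path of $U$ ending at $v_1$, then $v_1v_2v_3$ traversed so as to terminate at $v_2$, then this length-$2$ path, is a $P_5$ — a contradiction. Hence $v_2v_3$ is a center pair of $V$, which is conclusion (1), and the case with the roles of $U,V$ exchanged is identical. Finally, for $P=v_1v_2v_3v_4v_5$: if $\{v_1,v_2\}$ were not a center pair of $U$ the same connectedness argument gives a path of length $2$ inside $U$ from $v_1$ or from $v_2$ avoiding the other; prepending it to $P$ (re-traversed as $v_2v_1v_3v_4v_5$ when the path starts at $v_2$) and appending one edge of $V$ at $v_5$ is a $P_5$; symmetrically $\{v_4,v_5\}$ is a center pair of $V$, which is conclusion (2).

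The step I expect to be the main obstacle is this last one: turning the assumption ``not a center pair'' into a concrete length-$2$ path through a hypothetical far edge using only connectedness of $V$ (or $U$), and then choosing the right traversal directions of the handful of edges involved so that the assembled object is genuinely a linear path — every vertex distinct, every two consecutive edges meeting in a single vertex sitting at an odd position — and not merely a Berge path. Keeping this bookkeeping correct across the several sub-cases is where the care is needed; the length estimates themselves are routine once the gluing is arranged.
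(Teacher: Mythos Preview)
Your proof is correct and follows essentially the same strategy as the paper's: extend the connecting path $P$ by paths inside $U$ and $V$ and use $P_5$-freeness to bound the length and force the center-pair/2-center structure. The only real difference is presentational---the paper packages the extension facts as two preliminary claims and handles the length-$2$ case by reducing $e_1$ to the edge case against $V\cup e_2$, whereas you do a direct four-way case split on $(|P\cap U|,|P\cap V|)$ and spell out the connectedness argument for ``not a center pair $\Rightarrow$ a length-$2$ path avoiding the other vertex'' that the paper leaves as ``not hard to see''.
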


\begin{proof}
    For any connected subgraph $X$ of $G$ and edge $g$ with $|g\cap X|\leq 2$, by connectivity it is not hard to see that
    \begin{enumerate}
        \item If $|g\cap X|=2$ and $g\cap U$ is not a center pair of $U$, then there is a $P_3$ starting from $g$ in $U\cup g$.
        \item If $|g\cap X|=1$ and $g\cap U$ is not a $k$-center of $U$, then there is a $P_{k+2}$ starting from $g$ in $U\cup g$.
    \end{enumerate}

    First assume $P$ is an edge. Since $G$ is 3-uniform, obviously at least one of $|P\cap U|$ and $|P\cap V|$ equals 1 and so we can assume $|P\cap U|=1$. Since $P$ does not contain a center of $U$, there is a $P_3$ starting from $P$ in $U\cup e$ by the claim. If $|P\cap V|=1$, similarly there exist another $P_3$ starting from $e$ in $V\cup e$ by the claim.
    These two $P_3$ together form a $P_5$ in $G$, a contradiction. Therefore, $|P\cap V|=2$ and if $P\cap V$ is not a center pair, again by the claim we obtain a $P_3$ in $V\cup e$ and further a $P_5$ in $G$, a contradiction.
     
    If $P$ has length at least 2, note that $P$ can be extended by at least one edge in $U$ and another in $V$. So to avoid the presence of $P_5$, $P$ has length 2. Let $P = e_1e_2$. Then $e_1$ connects $U$ and $V\cup e_2$ and does not contain a center of them. By the previous discussion, $e_1\cap U$ is a center pair of $U$ and by symmetry, $e_2\cap V$ is a center pair of $V$.
\end{proof}

\begin{lem}\label{lmcon}
$G$ is 2-connected. That is, after deleting any vertex in $G$, the remaining graph is still connected.
\end{lem}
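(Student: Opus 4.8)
The plan is to argue by contradiction. Suppose $v$ is a cut vertex of $G$ and let $C_1,\dots,C_t$ (with $t\ge 2$) be the components of $G-v$. By \Cref{contradiction} we have $\delta(G)\ge 2$, so $d:=d_G(v)\ge 2$, and since $G$ is linear any two edges at $v$ meet only in $v$; hence the edges at $v$ form a matching $e_1,\dots,e_d$ on $2d$ distinct vertices. Every $C_i$ contains a neighbour of $v$ (otherwise $C_i$ would be separated from $v$ in $G$, contradicting connectivity), and every neighbour $z$ of $v$ has $d_G(z)\ge 2$ with exactly one incident edge through $v$, hence at least one edge inside its own component; in particular there is a path of length $1$ starting at $z$ inside that component.

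The core of the proof is the claim that \emph{no neighbour $z$ of $v$ starts a path of length $2$ inside $G-v$}. Suppose, for contradiction, that $zw_1w_2w_3w_4$ is such a path, lying inside the component $C$ of $z$; write $vzz^{*}$ for the edge at $v$ through $z$, choose a second edge $vyy^{*}\ne vzz^{*}$ at $v$, and let $ytt^{*}$ be an edge at $y$ inside the component of $y$ (such an edge exists by the previous paragraph). Then the eleven-term vertex sequence
\[
w_4\ w_3\ w_2\ w_1\ z\ z^{*}\ v\ y^{*}\ y\ t\ t^{*}
\]
has its five consecutive edge-triples equal to $w_2w_3w_4$, $zw_1w_2$, $vzz^{*}$, $vyy^{*}$, $ytt^{*}$, all of which are edges of $G$; so it is a copy of $P_5$, the desired contradiction — provided the eleven listed vertices are pairwise distinct. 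Linearity of $G$ rules out almost all coincidences, and the ones that survive force either $z^{*}\in\{w_3,w_4\}$, or the second edge $vyy^{*}$ (and hence also $ytt^{*}$) to meet $C$. The first is handled by re-selecting the length-$2$ path at $z$; the second by choosing $vyy^{*}$ so that it reaches a component of $G-v$ other than $C$ — possible because $v$ is a cut vertex and every component is reached by some edge at $v$. Making these selections precise and disposing of the residual overlaps is the main technical obstacle of the lemma; \Cref{conpath}, applied with $U=C$ and $V=G-C$, offers an alternative route to the same structural conclusion.

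Granting the claim, the lemma follows quickly. Fix any component $C$, a neighbour $z\in C$ of $v$, and an edge $zpq$ inside $C$. If some vertex among $z,p,q$ lay in a second edge inside $C$, then rerouting along $zpq$ produces a path of length $2$ inside $C$ starting at $z$ (or, in the one remaining subcase, one starting at $p$, which must itself then be a neighbour of $v$), contradicting the claim. Hence $z,p,q$ each have exactly one edge inside $C$, namely $zpq$, and since $C$ is connected this forces $C=\{z,p,q\}$. Now $p$ and $q$ have degree at least $2$ with no further edge inside $C$, so each lies in an edge through $v$; thus $z,p,q$ are neighbours of $v$ in three distinct matching edges (two of them in a common $e_i$ would contradict linearity with $zpq$). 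Therefore $E(C)$ consists of $zpq$ together with exactly those three edges at $v$, so $|E(C)|=4$, while $\tfrac{15}{11}|C|=\tfrac{45}{11}>4$, contradicting \Cref{contradiction}. Hence $G$ has no cut vertex, i.e.\ $G$ is $2$-connected.
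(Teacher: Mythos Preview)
Your argument has a genuine gap in the proof of the central claim that no neighbour $z$ of the cut vertex $v$ starts a path of length $2$ inside $G-v$. You yourself write that ``making these selections precise and disposing of the residual overlaps is the main technical obstacle of the lemma,'' and indeed you never dispose of them. The specific obstruction is real: when $z^{*}\in\{w_3,w_4\}$ (which linearity does not forbid), your eleven-vertex sequence repeats a vertex, and ``re-selecting the length-$2$ path at $z$'' need not be possible. For instance, if the only edges of $C$ meeting $\{z,w_1,w_2,w_3,w_4\}$ are $zw_1w_2$ and $w_2w_3w_4$, and the edges at $v$ into $C$ are exactly $vzw_3$ and $vw_1w_4$, then \emph{every} neighbour of $v$ among $\{z,w_1,w_3,w_4\}$ has its partner on the unique $P_2$ it starts, so no re-selection exists. (Such a local configuration is eventually excluded by~\Cref{contradiction}, but you would have to invoke it here, not only at the end.) The second family of overlaps --- when the edge $vyy^{*}$ straddles components with $y^{*}\in C$ --- is likewise unresolved: you do not show that an edge at $v$ reaching some $C'\neq C$ can be chosen with \emph{both} non-$v$ vertices outside $C$. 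Your bare reference to~\Cref{conpath} with $U=C$, $V=G-C$ does not fill the gap either, since to apply it you must first know that neither side is a star, and the straddling-edge issue is precisely what prevents the hypotheses from being met without further work.

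The paper's proof avoids this trap by arguing in a different order. It first treats the case that some component of $G-v$ is a star and eliminates it by a direct count using~\Cref{contradiction}. With all components non-stars, \Cref{conpath} applied to two distinct components shows that no edge at $v$ can straddle components, so every edge $\{v,v_1,v_2\}$ has $v_1,v_2$ in a single component; a second application of~\Cref{conpath} then forces the neighbours of $v$ inside each component to form disjoint center pairs, and the contradiction comes from the short list of graphs with $\gamma\ge 2$ in~\Cref{gammaUleq3} together with~\Cref{contradiction}. Your endgame --- forcing each component to be a single hyperedge and then counting --- is clean once your claim is granted, but as written the claim itself is not established.
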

\begin{proof}
    If not, let $u$ be a cut vertex. For any component $C$ in $G-u$, we identify $G[C]$ and $C$ in the proof of this lemma.

    Suppose there is a component $S$ which is a star centered at $v$. Note that it is not an isolated vertex or a single edge, otherwise $|E(S)|\leq \max\{\frac{1}{1},\frac{3}{2}\}|S|<\frac{15}{11}|S|$, contradicting~\Cref{contradiction}. Let $S=\{v,w_1,w_2,\ldots,w_{2k}\}$ such that $E_3(S)=\{vw_{2i-1}w_{2i}\mid 1\leq i \leq k\}$, $k\geq 2$.
    If $uw_1w_3\in E (G)$, then by linearity $E(\{w_1,w_2,w_3,w_4\})\leq 5<\frac{15}{11}\cdot 4$, contradicting~\Cref{contradiction}. By symmetry we can assume there is no edge $e$ incident to $u$ such that $|e\cap S|=2$. However, since $d(w_i)\ge 2$ and $u$ is a cut vertex, there exists $v_i\in G$ such that $uw_iv_i\in E (G)$, $1\leq i\leq k$.
    By previous discussion, $v_i\notin S$ and thus there is a $P_3$ in $S\cup \{u,v_i\}$ starting from $v_i$. To avoid the presence of $P_5$, $v_i$ is the center of the component containing $v_i$ and thus the component is a star. Since there is exactly one edge between $u$ and the center of each star components, it follows that the number of centers of star components is at least the number of leaves, which is absurd since each star has at least 4 leaves. Thus there exists no component $S$ which is a star.

    Fix two different components $D_1,D_2$, by~\Cref{conpath} there is no edge $e=\{v_1,u,v_2\}$ such that $v_1,v_2$ are in $D_1,D_2$. But $G$ is connected, so there exists an edge $\{u,v_1,v_2\}$ such that $v_1,v_2\in D_1$.
    Again by~\Cref{conpath} applied to $D_1$ and $D_1\cup u$, we have $u$ is a 2-center of $D_1\cup u$ and $\{v_1,v_2\}$ is a center pair of $D_2$. That is, neighbors of $u$ in $D_1$ are disjoint center pairs. Let $T$ be the neighbors of $u$ in $C$. If $T\geq 4$, then $\gamma(D_1)\geq 2$ and by~\Cref{gammaUleq3} we have $E(D_1)\leq \max\{\frac{7}{6},\frac{4}{5},\frac{6}{7},\frac{6}{8}\}|D_1|<\frac{15}{11}|D_1|$, contradicting~\Cref{contradiction}. Thus $T=2$ and each edge in $D_1$ is incident to $T$. That is, each vertex in $D_1-T$ has degree 2 and $E(D_1-T)\leq |D_1-T|$, contradicting~\Cref{contradiction}.
\end{proof}

Now we begin to prove~\Cref{S2S2}. We will show that after deleting the vertices of any $P_2$ in $G$, the remaining graph cannot contain the following structures step by step:\\

\emphd{Case 1.} there is a $P_4$ or $C_4$ in the remaining graph (\Cref{4P2}).\\

\emphd{Case 2.} there is a $P_3$ in the remaining graph and it is not a component (\Cref{I3P2}).\\

\emphd{Case 3.} there is a $P_3$ or $C_3$ in the remaining graph (\Cref{3P2}).\\

\emphd{Case 4.} there is a $P_2$ in the remaining graph (\Cref{2P2}).\\

The key idea is to consider the 2-centers and center pairs in the disjoint structures. For readability, in this section we will color 2-centers and center pairs red.

\begin{lem}\label{4P2}
	For $U=P_4$ or $C_4$, there is no disjoint $U$ and $P_2$ in $G$.
\end{lem}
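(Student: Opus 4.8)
The plan is to argue by contradiction: suppose $G$ contains disjoint subgraphs $U$ and $P_2$, where $U$ is $P_4$ or $C_4$, and derive a contradiction with Lemma~\ref{contradiction} by exhibiting a small vertex set $S$ with too few incident edges. The first observation is that $G$ has no $P_5$, so any path in $G$ that starts inside $U$ and leaves $U$ is heavily constrained; since $U$ is connected and (as recorded in Example~\ref{exmps} and Proposition~\ref{gammaUleq3}) both $P_4$ and $C_4$ have a center pair, Lemma~\ref{conpath} is exactly the tool that controls how $U$ attaches to the rest of $G$. Concretely, I would first dispose of $U=C_4$: a $C_4$ has a center pair $\{a,b\}$ but no $2$-center, so by Lemma~\ref{conpath} every path joining $C_4$ to anything else that avoids a center of $C_4$ must meet $C_4$ in a center pair; but $C_4$ has a unique center pair, and one checks that attaching two edges through $\{a,b\}$ together with the existing $C_4$ already yields a $P_5$ (or forces low degree). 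The upshot is that essentially all edges of $G$ meeting $V(C_4)$ pass through the two center-pair vertices, so the remaining two vertices of $C_4$ have degree $2$ inside $U$; deleting $V(C_4)$ then removes $4$ vertices and at most (count the edges through $a,b$, limited by linearity and by the $P_5$-freeness using the disjoint $P_2$ on the other side) fewer than $\tfrac{15}{11}\cdot 4$ edges, contradicting Lemma~\ref{contradiction}. The same bookkeeping handles $P_4$, whose unique $2$-center $c$ and unique center pair must similarly absorb all outgoing edges.

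The key mechanism in each case is: use the disjoint $P_2$ to "pay for" the path extension. If an edge leaves $U$ through a non-center-pair vertex, Lemma~\ref{conpath}'s internal claim gives a $P_3$ starting from that edge inside $U$ together with the new edge, and then I need a second $P_3$ on the far side to complete a $P_5$. This is where the disjoint $P_2$ enters: after routing the escaping edge we want it to land near the disjoint $P_2$ (or near enough that a $P_3$ is available there), giving the forbidden $P_5$. So the structure of the argument is a small case analysis on where the at most a handful of edges incident to $V(U)\setminus(\text{center stuff})$ can go, each branch ending either in an explicit $P_5$ or in a degree/linearity count that violates Lemma~\ref{contradiction} on a set $S$ of size $\le 4$ (e.g.\ $S$ a "sub-path" of $U$, as in the $\{w_1,w_2,w_3,w_4\}$ trick already used in Lemma~\ref{lmcon}).

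I expect the main obstacle to be the careful enumeration when $U$ attaches to the rest of $G$ through its center-pair vertices with high multiplicity: there one must simultaneously respect linearity (each pair in at most one edge), $2$-connectivity from Lemma~\ref{lmcon} (so no vertex of $U$ is a cut vertex, forcing outgoing edges to exist), and the $P_5$-freeness, and then verify that the natural candidate set $S$ (typically $V(U)$ itself, or $V(U)$ minus one vertex, or a $4$-subset) really has $|E(S)| < \tfrac{15}{11}|S|$. The disjoint $P_2$ is only used to guarantee that there is always "somewhere to extend to" — i.e.\ to rule out the configuration where every edge leaving $U$ dies immediately into a low-degree or star-like region — so I would keep track of whether the component on the $P_2$ side supplies a $P_3$; if it does we win by $P_5$, and if it does not, that side is a star, which by the degree argument of Lemma~\ref{lmcon} is itself impossible under Lemma~\ref{contradiction}. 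Assembling these pieces, both $U=P_4$ and $U=C_4$ lead to a contradiction, proving the lemma.
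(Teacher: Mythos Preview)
Your plan has a basic factual error that undermines the edge counts: in this paper $C_4$ and $P_4$ are linear $3$-graphs, so $|V(C_4)|=8$ and $|V(P_4)|=9$, not $4$ and $5$. When you write ``deleting $V(C_4)$ then removes $4$ vertices'' and speak of ``the remaining two vertices of $C_4$'', you are treating $C_4$ as an ordinary $4$-cycle; with the correct vertex counts your target inequality $|E(S)|<\tfrac{15}{11}|S|$ becomes much looser and the bookkeeping you sketch does not close. (Relatedly, $C_4$ has \emph{two} center pairs, not one; the proof of this lemma in the paper uses $a(C_4)=2$.)

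Beyond the arithmetic, the strategy itself is pointed in the wrong direction. You plan to take $S$ inside $V(U)$ and argue that edges leaving $U$ are scarce. The paper does the opposite: it lets $V$ be any component of $G-U$ with at least two edges (the disjoint $P_2$ guarantees one exists), and bounds $|E(V)|$. The key estimate is that every edge in $E(V)\setminus E_3(V)$ connects $U$ and $V$, so by Lemma~\ref{conpath} it uses a center pair or $2$-center of $U$; since $a(U)+\gamma(V)\,b(U)\le 4$, there are at most four such edges. The case analysis is then on the structure of $V$ (star, $\gamma(V)\ge 1$, or $\gamma(V)=0$), not on where edges from $U$ go; in each case one bounds $|E_3(V)|$ by $|V|-2$ or similar and gets $|E(V)|<\tfrac{15}{11}|V|$, contradicting Lemma~\ref{contradiction}. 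The $\gamma(V)=0$ branch additionally needs the $2$-connectivity of $G$ from Lemma~\ref{lmcon}. Your use of the disjoint $P_2$ as ``somewhere to extend to'' never materializes into a concrete $P_5$; in the paper the $P_2$ is used only to produce the component $V$, and all the work happens on that side.
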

\begin{proof}
    Denote the number of center pairs and 2-centers in $U$ by $a(U)$ and $b(U)$ ($a(U)$ is different from $\gamma(U)$, the maximum number of disjoint center pairs). Note that $a(P_4)=b(P_4)=1$, $a(C_4)=2, b(C_4)=0$ and the two center pairs in $C_4$ are disjoint.
	Suppose the lemma is false, then there exist a $P_4$ or $C_4$, denoted by $U$, such that there is a component $V$ in $G-U$ with at least 2 edges. Note that any edge $e$ in $E(V)-E_3(V)$ must intersect $U$, and thus connects $U$ and $V$. By~\Cref{conpath}, $e$ consists of 2-centers and center pairs of $U,V$. Since $G$ is linear, each center pair in $U$ has at most one neighbor in $U$, and each 2-center in $U$ is adjacent to at most $\gamma(V)$ center pairs in $V$. Thus $|E(V)-E_3(V)|\le a(U)+\gamma(V)b(U)\leq 4$.
	
	Suppose $V$ is a $k$-star with $2k$ leaves, $k\ge 2$.
    Note that $\gamma(S_4)=2$ and $\gamma(S_{2k})=0$ for $k\geq 3$.
    Let $L$ be the leaves of $V$ and we have $|E(L)|\le E_3(V)+a(U)+2b(U)\leq k+3<\frac{15}{11}|S|$, contradicting~\Cref{contradiction}.
	So we can assume $V$ is not a star and then discuss based on the value $\gamma(V)$.

    If $\gamma(V)\geq 1$, let $\{v_1,v_2\}$ be a center pair. Then any edge in $V$ is incident to $\{v_1,v_2\}$ and by linearity we have $E_3(V)\leq |V|-2$. Thus $|E(V)|\leq |V|+2<\frac{15}{11}|V|$ since $|V|\geq 5$.
	
	If $\gamma(V) = 0$, there is no center pair in $V$. Then by~\Cref{conpath} each edge connecting $U$ to $V$ must consists of one center pair in $U$ and a 2-center in $V$.
    By~\Cref{lmcon}, $G$ is 2-connected, and thus there are at least two edges $g_1,g_2$ connecting $U,V$ containing different 2-centers $v_1,v_2$ in $V$. It follows that $a(U)= 2$, $U=C_4$ and $g_1,g_2$ are the only edges between $U,V$. Suppose there is an edge $e_1$ in $V$ disjoint from $v_1$ and $v_2$, since $G-v_2$ is connected, there is a path $P$ from $e_1$ to $g_1$ of length at least 3. Together with one edge in $U$ and $g_2$, $P$ can be extended to a path of length at least 5, a contradiction. Thus every edge in $V$ is incident to one of $v_1$ or $v_2$. However, $V$ has no center pair, so $v_1, v_2$ have to be adjacent, Thus there is an edge $\{v_1,v_2,v_3\}$ in $V$. Since any edge in $V$ is incident to $v_1$ or $v_2$, by linearity we have $d_G(v_3)=1$, contradicting to the fact that $\delta(G) = 2$.
\end{proof}

\begin{lem}\label{I3P2}
	After deleting the vertices of an arbitrary $P_2$ in $G$, if some component contains a $P_3$, then this component is exactly a $P_3$. 
\end{lem}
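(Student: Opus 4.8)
The plan is to argue by contradiction. Suppose some $P_2$ in $G$, with vertex set $Q$ (so $|Q|=5$), is such that a component $W$ of $G-Q$ contains a linear $P_3$ but $W\neq P_3$; since $W$ is connected this forces $|E(W)|\ge 4$. I would fix one copy $P_3=x_1x_2\cdots x_7\subseteq W$ with edges $e_1=x_1x_2x_3$, $e_2=x_3x_4x_5$, $e_3=x_5x_6x_7$. Two general observations set the stage. First, by \Cref{4P2}, $W$ contains no $P_4$ and no $C_4$. Second, a linear star contains no $P_3$, so $W$ is not a star and therefore has no center; hence \Cref{conpath} governs the paths joining $W$ to $Q$ and tells us that every edge between $W$ and $Q$ that avoids the center of the chosen $P_2$ is either of type (A) — two endpoints forming a center pair of $W$, one endpoint a $2$-center of $Q$ — or of type (B) — one endpoint a $2$-center of $W$, two endpoints forming a center pair of $Q$. (Edges incident to the center of the $P_2$ on $Q$ need a small separate argument, since there such an edge together with a long enough path in $W$ and an edge of the $P_2$ yields a $P_5$.)

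The first real step is a structural claim: \emph{every edge of $W$ meets the middle edge $e_2=\{x_3,x_4,x_5\}$.} This is a finite check on how an extra edge $f\in E(W)\setminus\{e_1,e_2,e_3\}$ can attach. If $f$ misses $e_2$ and meets $P_3$ only in $\{x_1,x_2\}$ (symmetrically $\{x_6,x_7\}$), say $f=\{x_1,a,b\}$ with $a,b$ new, then $b,a,x_1,x_2,x_3,x_4,x_5,x_6,x_7$ is a $P_4$; if instead $f$ reaches a vertex of $e_3$ (for instance $f=x_1x_6a$ or $f=x_1x_7a$), then $e_1,e_2,e_3,f$ already form a $C_4$. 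Both contradict \Cref{4P2}. Thus $e_2$ is a dominating edge of $W$, meeting each edge of $W$ in exactly one of $x_3,x_4,x_5$ by linearity, so $W$ is the edge $e_2$ together with ``petals'' hung at $x_3$, at $x_4$ and at $x_5$, the petals at a common vertex being pairwise disjoint off that vertex. This in turn pins down the one parameter I need: a center pair of $W$ must cover $e_1,e_2,e_3$ and avoid the adjacent pair $\{x_3,x_5\}$, which leaves only $\{x_3,x_6\},\{x_3,x_7\},\{x_1,x_5\},\{x_2,x_5\}$ as candidates, and having three of these simultaneously as center pairs is seen (via linearity) to force $W=P_3$; hence $W$ has at most two center pairs.

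Next I would count the edges leaving $W$. Since $W$ is a component of $G-Q$, each such edge goes to $Q$, and (modulo the center-of-$P_2$ case above) it is type A — using a center pair of $W$, of which there are $\le 2$ — or type B — using a center pair of the $P_2$ on $Q$, of which there are exactly $4$; as each such pair lies in at most one edge by linearity, at most $6$ edges leave $W$. Applying \Cref{contradiction} to $S=V(W)$ gives $|E(W)|+6\ge |E(S)|\ge \frac{15}{11}|V(W)|$, i.e.\ $|E(W)|\ge\frac{15}{11}|V(W)|-6$. But the petal description forces $\frac{15}{11}|V(W)|-|E(W)|$ to exceed $6$ in all cases except a short list of graphs on $7$ or $8$ vertices (for example $P_3$ plus an extra edge $x_1x_4x_7$, or $P_3$ plus a single petal). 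Those I would eliminate one at a time: in such a $W$, a vertex among $x_2,x_6$ lies in only one edge of $W$, is not a $2$-center of $W$ (a $P_3$ starts there), and lies in no center pair of $W$, so by \Cref{conpath} it also has no edge to $Q$; hence its degree in $G$ is $1$, contradicting the ``$\delta(G)\ge 2$'' clause of \Cref{contradiction}. For the remaining handful of small $W$ where $x_2,x_6$ are not forced to have degree $1$, I would enlarge $S$ by one or two vertices of $Q$ and rerun \Cref{contradiction}, using that those extra $Q$-vertices either carry no further edges (an immediate density contradiction) or carry edges that complete a $P_5$ together with a $P_3$ in $W$.

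The step I expect to be the main obstacle is proving that $e_2$ dominates $W$: it rests on a somewhat delicate but finite enumeration of how an edge can attach to a fixed $P_3$ without creating a $P_4$ or $C_4$. A secondary difficulty is the bookkeeping at the end, where for the borderline configurations on $7$ or $8$ vertices the crude edge count from \Cref{contradiction} is exactly tight, so the minimum-degree condition (and occasionally an enlargement of the test set $S$, or the $P_5$ coming from the center of the $P_2$ on $Q$) must be invoked to finish.
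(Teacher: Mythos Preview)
Your approach differs substantially from the paper's, and it contains a genuine gap at its very first structural step.

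\textbf{The structural claim is false as stated.} You fix an arbitrary copy of $P_3=e_1,e_2,e_3$ inside $W$ and assert that every edge of $W$ meets the middle edge $e_2$. Your justification only treats the case where the extra edge $f$ meets $V(P_3)$ somewhere. But $f$ may be entirely disjoint from $V(P_3)$. Concretely, take
\[
e_1=x_1x_2x_3,\quad e_2=x_3x_4x_5,\quad e_3=x_5x_6x_7,\quad g=x_3x_6y,\quad f=yz_1z_2
\]
with $y,z_1,z_2$ new. This $5$-edge graph is connected, contains the $P_3$, and a direct check shows it contains no $P_4$ and no $C_4$ (the edge $g$ meets each of $e_1,e_2,e_3$, which blocks every would-be fourth edge of a path or cycle). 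Yet $f$ is disjoint from $e_2$. So the ``domination by $e_2$'' picture, and everything you build on it --- the petal description, the bound of two center pairs, and the edge-count --- collapses for this $W$. (In fact this $W$ has \emph{no} center pair at all, since any pair meeting $f$ and $e_1$ already misses $e_3$.) The paper sidesteps this by never trying to describe all of $W$: instead it observes that $W$ must contain one of the five $4$-edge graphs $L_{14},L_{11},L_{12},L_{8},L_{13}$ and runs a separate, rather delicate counting argument on a small test set $S$ tailored to each one.

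\textbf{A second, independent problem.} Your bound ``at most $6$ edges leave $W$'' ignores the edges through the center $v_3$ of the $P_2$ on $Q$. \Cref{conpath} says nothing about those; an edge $\{v_3,w,w'\}$ with $w,w'\in W$ is only forbidden if a $P_3$ in $W$ starts at $w$ or $w'$, and by linearity there can be up to $\lfloor |W|/2\rfloor$ such edges. So the inequality you feed into \Cref{contradiction} is off by an unbounded term, and the ``borderline cases on $7$ or $8$ vertices'' are not the only survivors. The paper's case analyses devote considerable effort precisely to controlling edges through $v_3$ (this is the role of the sets $E(v_3)\cap E(S)$ and the various $P_5$'s built using $v_1v_2v_3$ in their proofs of cases (1)--(5)).
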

\begin{proof}
Suppose after deleting vertices of some subgraph $V=P_2$ in $G$ consisting of two edges $v_1v_2v_3, v_3v_4v_5$, there is a component containing a $P_3$ but not a $P_3$ by itself. By~\Cref{4P2}, it does not contain a $P_4$ or $C_4$ and thus the component contains a subgraph $U=L_{14},L_{11},L_{12},L_{8}$ or $L_{13}$.
    
(1) $U=L_{14}$.
    \begin{figure}[h]
	\centering
	\begin{tikzpicture}[scale=0.7, every node/.style={circle, fill, inner sep=1.2pt}]
		\node[fill=red] (v1) at (-1, 0) {};
		
		\node[fill=red] (v3) at (1, 0) {};
		
		\node[fill=red] (v5) at (-1, 2) {};

		\draw[black, thick] (v1) -- (v3);
		\draw[black, thick] (v1) -- (v5);

        \node[fill=red] (v4) at (-1, 1) {};
		\node[fill=red] (v2) at (0, 0) {};
        \draw[red, thick] (v3) -- (v4);
		\draw[red, thick] (v2) -- (v5);
		\draw[red, thick] (v3) -- (v5);
		\draw[red, thick] (v2) -- (v4);

		\node[draw=none,fill=none] at (-1.45, 2) {$v_1$};
		\node[draw=none,fill=none] at (-1.45, 1) {$v_2$};
		\node[draw=none,fill=none] at (-1.45, 0) {$v_3$};
		\node[draw=none,fill=none] at (0.4, -0.35) {$v_4$};
		\node[draw=none,fill=none] at (1.5, 0) {$v_5$};
		\node[draw=none,fill=none] at (0, -1) {$V$};
		
	\end{tikzpicture}%
    \hspace{0.4cm}
	\begin{tikzpicture}[scale=0.7, every node/.style={circle, fill, inner sep=1.2pt}]
	\node[fill=black] (v1) at (-1, 0) {};
	\node[fill=black] (v3) at (1, 0) {};
	\node[fill=black] (v4) at (-1, 1) {};
	\node[fill=black] (v5) at (-1, 2) {};
	\node[fill=black] (v6) at (1, 1) {};
	\node[fill=black] (v7) at (1, 2) {};	
	\node[draw=none,fill=none] at (-1.45, 2) {$u_1$};
	\node[draw=none,fill=none] at (-1.45, 1) {$u_2$};
	\node[draw=none,fill=none] at (-1.45, 0) {$u_3$};
	\node[draw=none,fill=none] at (0, 0.4) {$u_4$};
	\node[draw=none,fill=none] at (1.5, 2) {$u_5$};
	\node[draw=none,fill=none] at (1.5, 1) {$u_6$};
	\node[draw=none,fill=none] at (1.5, 0) {$u_7$};
	
	\draw[black, thick] (v1) -- (v7);
	\draw[black, thick] (v1) -- (v5);
	\draw[black, thick] (v3) -- (v5);
	\draw[black, thick] (v3) -- (v7);
		\node[fill=red] (v2) at (0, 1) {};
	\draw[red, thick] (v1) -- (v3);
	\draw[red, thick] (v5) -- (v7);
	\node[draw=none,fill=none] at (0, -1) {$L_{14}$};
	
\end{tikzpicture}	
\end{figure}

	In the figure above, we color the 2-centers red and connect the center pairs by red edges in $U$ and $V$. Let $S=V(U)-u_4$, $F_U$ be the 4 edges in $U$, $E^-(S) =E(S)-F_U-E(v_3)$ and $E_i^-(S) = E_i(S)-F_U-E(v_3)$ for $i=1,2,3$. Note that $E_3^-(S)=\emptyset$ by linearity, and thus $E^-(S) = E_1^-(S)\cup E_2^-(S)$. 
    
    By~\Cref{contradiction}, $|E(S)|\ge 9$, which means $|E^-(S)|+|E(v_3)\cap E(S)|\ge 5$. Our goal is to show that $|E^-(S)|+|E(v_3)\cap E(S)|\leq 4$ and obtain a contradiction. 

    For $g\in E_1^-(S)$, we claim that $u_4\in g$. If not, by symmetry we can assume $g\cap U\in\{u_1,u_2\}$. Then either $g,u_1u_2u_3,u_3u_4u_5,u_5u_6u_7$ form a $P_4$ disjoint from $V$, contradicting~\Cref{4P2},
    or there exists $g'\in\{v_1v_2v_3,v_3v_4v_5\}$ such that $g',g,u_1u_2u_3,u_3u_4u_5,u_5u_6u_7$ form a $P_5$ in $G$, a contradiction. Moreover, by linearity $u_1,u_3,u_5,u_7\notin g$, so $g\cap U\in \{u_2,u_6\}$. Since $g\notin E(v_3)$, if $g\cap V\neq \emptyset$, by symmetry we can assume $g=v_1u_2u_4$, 
    and then $v_5v_4v_3,v_3v_2v_1,v_1u_2u_4,u_4u_3u_5,u_5u_6u_7$ form a $P_5$ in $G$, a contradiction. In conclusion, for any $g\in E_1^-(S)$, $g=u_4u_2u_8$ or $u_4u_6u_9$, where $u_8$ and $u_9$ lie outside $U\cup V$. Since there is no $P_4$ disjoint from $V$, these two edges cannot exist simultaneously. Thus $|E_1^-(S)|\le 1$.

    For $g\in E(S)\cap E(v_3)$, by linearity $|g\cap V|=1$ and $g$ connects $V,U$. If $|g\cap U|=1$, since any vertex in $S$ is an endpoint of a $P_3$ in $U$, there exists a $P_5$ in $G$ consisting of this $P_3$ and $g, v_1v_2v_3$, a contradiction. Thus $|g\cap U|=2$, and by linearity we have $|E(S)\cap E(v_3)|\le \lfloor\frac{|U|}{2}\rfloor=3$.

     Let $E'$ be the set of edges in $E^-(S)$ containing a center pair of $U$, i.e., containing either $u_3u_7$ or $u_1u_5$.
    
    \emphd{Case 1.} $E'=\emptyset$. Since $|E(S)\cap E(v_3)|\le 3$, it suffices to show $|E_2^-(S)\cup E_1^-(S)|\le 1$.
    
    For $h \in E_2^-(S)$, by definition $v_3\notin h$, and by symmetry, we could suppose $h\cap S=u_1u_6$ or $u_2u_6$. Immediately we obtain that the third vertex $w= h-S\in U\cup V$ since otherwise $u_1u_2u_3,u_3u_4u_5,u_5u_6u_7,h$ is a $C_4$ disjoint from $V$, contradicting~\Cref{4P2}. If $w\in V$, then by definition, $w\neq v_3$, and by symmetry, we can assume $w=v_1$ and then $v_5v_4v_3,v_3v_2v_1,h,u_6u_5u_7,u_5u_4u_3$ forms a $P_5$, a contradiction. Thus, $w=u_4$ because $w\notin S$. By linearity, the only possible case of $h$ is $u_4u_2u_6$, which implies that $E_2^-(S)\subseteq\{u_4u_2u_6\}$. Again by linearity, any two of $u_2u_4u_6,u_2u_4u_8$ and $u_4u_6u_9$ cannot exist simultaneously, which implies that $|E_2^-(S)\cup E_1^-(S)|\le 1$. 

	\emphd{Case 2.} $|E'|=1$. By symmetry, we can assume $E'=\{u_1 u_5 w\}$. It suffices to show $|E_2^-(S)\cup E_1^-(S)|\le 1$.

    As the discussion in Case 1, $E_2^-(S)\subseteq E' \cup\{u_4u_2u_6\}$. If $u_4u_2u_6$ is an edge of $G$, then $w\notin U\cup V$, since otherwise $h,u_1u_2u_3,u_2u_4u_6,u_6u_7u_5$ is a $C_4$ disjoint from $V$, a contradiction. Further as in Case 1, we have $E_1^-(S)=\emptyset$, and thus $|E_2^-(S)\cup E_1^-(S)|\le 1$. So we can assume $E_2^-(S)=E'$.
    
    If $w\in U\cup V$, then by definition $w\in V-v_3$, and by symmetry we can assume $w=v_1$. If there is an edge $g\in E_1^-(S)$, by symmetry we can assume $g=u_2u_4w'$ such that $w'\notin U\cup V$. Then $v_5v_4v_3,v_3v_2v_1,h,u_1u_2u_3,g$ is a $P_5$ in $G$, a contradiction.
    
    If $w\notin U\cup V$, we claim that there exists no edge $g=u_2u_4w'$ or $u_4u_6w'$ such that $w'\notin U\cup V$. If not, and if $w\neq w'$, then we can find a $P_4$ disjoint from $V$, a contradiction. By symmetry and linearity we can assume $w=w'$, $g=wu_2u_4\in E (G)$ and $wu_4u_6\notin G$. Then we have $h'=v_3u_3u_6$ or $v_3u_3u_7 \notin E (G)$, otherwise $v_1v_2v_3,h',u_7u_6u_5,u_5u_1w,wu_2u_4$ is a $P_5$ in $G$, a contradiction. 
    Thus $E(S)\cap E(v_3)\subseteq\{v_3u_1u_6,v_3u_2u_5,v_3u_2u_6,v_3u_2u_7\}$. By linearity, $|E(S)\cap E(v_3)|\leq 2$. Note that by the above discussion edges in $E_1^-(S)$ contains $u_4$ and thus $|E^-(S)|\leq 2$. Then we have $|E^-(S)|+|E(v_3)\cap E(S)|\le 4$, a contradiction.
    Thus we have $E_1^-(S)=\emptyset$ and $|E_2^-(S)\cup E_1^-(S)|=1$.
    
	\emphd{Case 3.} $|E'|=2$. As the discussion in case 2, $E_2^-(S)=E'$ and $E_1^-(S)=\emptyset$. Thus for any edge in $E^-(S)\cap E(v_3)$, it must contain one of $u_2,u_4$ by linearity. We conclude that $|E^-(S)|+|E(v_3)\cap E(S)|\le 4$.\\
    
	(2) $U=L_{11}$.
        \begin{figure}[h]
	\centering
	\begin{tikzpicture}[scale=0.7, every node/.style={circle, fill, inner sep=1.2pt}]
		\node[fill=red] (v1) at (-1, 0) {};
		
		\node[fill=red] (v3) at (1, 0) {};
		
		\node[fill=red] (v5) at (-1, 2) {};

		\draw[black, thick] (v1) -- (v3);
		\draw[black, thick] (v1) -- (v5);
        \node[fill=red] (v4) at (-1, 1) {};
		\node[fill=red] (v2) at (0, 0) {};
        
		\draw[red, thick] (v3) -- (v4);
		\draw[red, thick] (v2) -- (v5);
		\draw[red, thick] (v3) -- (v5);
		\draw[red, thick] (v2) -- (v4);

		\node[draw=none,fill=none] at (-1.45, 2) {$v_1$};
		\node[draw=none,fill=none] at (-1.45, 1) {$v_2$};
		\node[draw=none,fill=none] at (-1.45, 0) {$v_3$};
		\node[draw=none,fill=none] at (0.4, -0.35) {$v_4$};
		\node[draw=none,fill=none] at (1.5, 0) {$v_5$};
		\node[draw=none,fill=none] at (0, -1) {$V$};
		
	\end{tikzpicture}%
    \hspace{0.4cm}
	\begin{tikzpicture}[scale=0.7, every node/.style={circle, fill, inner sep=1.2pt}]
	\node[fill=black] (v1) at (-1, 0) {};

	\node[fill=red] (v3) at (1, 0) {};
	\node[fill=black] (v4) at (-1, 1) {};
	\node[fill=black] (v5) at (-1, 2) {};

	\node[fill=black] (v8) at (1, 1) {};
	\node[fill=black] (v9) at (1, 2) {};

	\draw[black, thick] (v1) -- (v3);
	\draw[black, thick] (v1) -- (v5);
	\draw[black, thick] (v3) -- (v5);
	\draw[black, thick] (v3) -- (v9);
	\draw[red, thick] (v4) -- (v3);
		\node[fill=red] (v2) at (0, 0) {};
		\node[fill=red] (v6) at (0, 1) {};
		\node[draw=none,fill=none] at (-1.45, 2) {$u_5$};
	\node[draw=none,fill=none] at (-1.45, 1) {$u_4$};
	\node[draw=none,fill=none] at (-1.45, 0) {$u_3$};
	\node[draw=none,fill=none] at (0.4, -0.35) {$u_2$};
	\node[draw=none,fill=none] at (0.3, 1.3) {$u_6$};
	\node[draw=none,fill=none] at (1.5, 2) {$u_8$};
	\node[draw=none,fill=none] at (1.5, 1) {$u_7$};
	\node[draw=none,fill=none] at (1.5, 0) {$u_1$};
	\node[draw=none,fill=none] at (0, -1) {$L_{11}$};
\end{tikzpicture}	
\end{figure}

Let $S = \{u_7, u_8\}$, $T = \{v_1, v_2, v_4, v_5\}$. For any $h\in E_1(S)$, by~\Cref{4P2} and (1) we have $h\cap V \neq \emptyset$, and thus $h$ connects $U,V$. 
Since $u_7, u_8$ are not 2-centers or vertices of center pairs, by~\Cref{conpath} $h\cap V = \{v_3\}$. Let $\tilde{v} = h-\{u_7,u_8,v_3\}$. If $\tilde{v}\notin U$, then $e_1, h, u_1u_7u_8, u_1u_2u_3, u_3u_4u_5$ form a $P_5$. Thus $\tilde{v}\in U$. Observe that $\tilde{v}\neq u_1$ by linearity. If $\tilde{v}=u_2$, then $u_3u_4u_5,u_5u_6u_1,u_1u_7u_8,h,v_3v_2v_1$ form a $P_5$. So $\tilde{v}\not=u_2$ and by symmetry $\tilde{v}\not=u_6$. Hence $\tilde{v}\in \{u_3,u_4, u_5\}$. By~\Cref{contradiction} with $|S|=2$, we have $|E_1(S)|\ge 2$. So we can choose some $h\in E_{1}(S)$ such that $\tilde{v}\not=u_4$. By symmetry we can assume $h=u_8u_3v_3$.
	
By~\Cref{contradiction}, $|E(T)|\geq 6$. First, $|E_3(T)|=0$ by linearity. Next, note that $u_5u_6u_1,u_1u_2u_3,u_3u_8v_3$ and $v_1v_2v_3(v_3v_4v_5)$ already form a $P_4$. This implies that every edge $e$ in $E(T)-\{v_1v_2v_3,v_3v_4v_5\}$ connects $V$ and $U$. Thus by~\Cref{conpath}, $e$ must contain the only center pair $u_1u_4$ of $U$. So $|E_1(T)|\le 1$ and thus $|E_2(T)|\ge 5$.
	
For any edge $g\in E_2(T)-\{v_1v_2v_3,v_3v_4v_5\}$, let $\tilde{u}=g-T$.
If $u\notin U$, then $g,v_1v_2v_3,h,u_8u_7u_1,u_1u_6u_5$ form a $P_5$ in $G$, a contradiction. Thus $g$ connects $U,V$ and by~\Cref{conpath}, $\tilde{u}\in \{u_1,u_2,u_6\}$. If $\tilde{u}=u_2$, then $u_1u_6u_5,u_5u_4u_3,h,v_1v_2v_3,g$ form a $P_5$, a contradiction. So $\tilde{u}\in\{u_1,u_6\}$. By linearity, there are at most two edges in $E_2(T)$ containing $u_6$ and at most two edges in $E_2(T)$ containing $u_1$. However, $|E_2(T)|\ge 5$, so there exists at least 3 edges in $E_2(T)$ containing one of $u_1$ and $u_6$. So we can find two edges $e_1,e_2\in E_2(S)$ such that $\emptyset \neq e_1\cap e_2\subseteq S$ and $u_1\in e_1$, $u_6\in e_2$. Assume $e_3\in\{v_1v_2v_3,v_3v_4v_5\}$ does not contain $e_1\cap e_2$. Then $e_2,e_1,v_1v_7v_8,h=u_8u_3v_3,e_3$ form a $P_5$, a contradiction.\\
    
(3) $U=L_{12}$.
    
    \begin{figure}[h]
	\centering
	\begin{tikzpicture}[scale=0.7, every node/.style={circle, fill, inner sep=1.2pt}]
		\node[fill=red] (v1) at (-1, 0) {};
		\node[fill=red] (v3) at (1, 0) {};
		\node[fill=red] (v5) at (-1, 2) {};
		\draw[black, thick] (v1) -- (v3);
		\draw[black, thick] (v1) -- (v5);
        \node[fill=red] (v4) at (-1, 1) {};
		\node[fill=red] (v2) at (0, 0) {};
		\draw[red, thick] (v3) -- (v4);
		\draw[red, thick] (v2) -- (v5);
		\draw[red, thick] (v3) -- (v5);
		\draw[red, thick] (v2) -- (v4);

		\node[draw=none,fill=none] at (-1.45, 2) {$v_1$};
		\node[draw=none,fill=none] at (-1.45, 1) {$v_2$};
		\node[draw=none,fill=none] at (-1.45, 0) {$v_3$};
		\node[draw=none,fill=none] at (0.4, -0.35) {$v_4$};
		\node[draw=none,fill=none] at (1.5, 0) {$v_5$};
		\node[draw=none,fill=none] at (0, -1) {$V$};
		
	\end{tikzpicture}%
    \hspace{0.4cm}
	\begin{tikzpicture}[scale=0.7, every node/.style={circle, fill, inner sep=1.2pt}]
	\node[fill=red] (v1) at (-1, 0) {};

	\node[fill=red] (v3) at (1, 0) {};
	\node[fill=black] (v4) at (-1, 1) {};
	\node[fill=black] (v5) at (-1, 2) {};
	\node[fill=black] (v6) at (1, 1) {};
	\node[fill=black] (v7) at (1, 2) {};
	\node[fill=black] (v8) at (-0.5, 1) {};

	\draw[black, thick] (v1) -- (v3);
	\draw[black, thick] (v1) -- (v5);
	\draw[black, thick] (v3) -- (v7);
	\draw[black, thick] (v5) -- (v2);
	\draw[red, thick] (v3) -- (v5);
	\node[fill=red] (v2) at (0, 0) {};
		\node[draw=none,fill=none] at (-1.45, 2) {$u_5$};
		\node[draw=none,fill=none] at (-1.45, 1) {$u_4$};
		\node[draw=none,fill=none] at (-1.45, 0) {$u_3$};
		\node[draw=none,fill=none] at (0.4, -0.35) {$u_1$};
		\node[draw=none,fill=none] at (-0.1, 0.65) {$u_6$};
		\node[draw=none,fill=none] at (1.5, 2) {$u_8$};
		\node[draw=none,fill=none] at (1.5, 1) {$u_7$};
		\node[draw=none,fill=none] at (1.5, 0) {$u_2$};
	\node[draw=none,fill=none] at (0, -1) {$L_{12}$};
\end{tikzpicture}	
\end{figure}

Let $S = \{u_7, u_8\}$, $T = \{v_1, v_2,v_4,v_5\}$.	If there is no edge $\tilde{u}_1\tilde{u}_2v_3$ such that $\tilde{u}_1\in \{u_7, u_8\}, \tilde{u}_2\in\{u_1, u_3\}$, we claim that $|E_1(S)|\le 1$. For any $h\in E_1(S)$,  by~\Cref{4P2} and case (1) we have $h\cap V \neq \emptyset$. It follows that $h\cap V = \{v_3\}$ since $u_7, u_8$ are not 2-centers or vertices of center pair. 
Let $\tilde{v} = h-\{u_7,u_8,v_3\}$, then $\tilde{v}\notin \{u_1, u_3\}$ by our assumption and $\tilde{v}\neq u_2$ by linearity. If $\tilde{v}=u_4$, then $u_5u_6u_1, u_1u_3u_2,u_2u_7u_8, h, v_3v_2v_1$ form a $P_5$. So $\tilde{v}\neq u_4$ and $\tilde{v}\neq u_6$ by symmetry. Thus $\tilde{v} = u_5$ and $|E_1(S)|\le 1$ by linearity. Thus $|E(S)|\leq 2<\frac{15}{11}|S|$, contradicting~\Cref{contradiction}.
	
Now by symmetry, we can assume $h =u_8v_3u_1\in E (G)$. Our goal compute $|E(T)|$. To start with, $|E_3(T)|=0$ by linearity. Next, note that $u_3u_4u_5,u_5u_6u_1,h=u_1u_8v_3$ and $e_1$ or $e_2$ already form a $P_4$. This implies that every edge in $E(T)-\{v_1v_2v_3,v_3v_4v_5\}$ connects $V$ and $U$, a star and a non-star subgraph. Thus by~\Cref{conpath},
for any edge in $E_1(T)$, it must the only center pair $u_2u_5$ of $U$, and thus $|E_1(T)|\le 1$. 
For any edge $g\in E_2(T)-\{v_1v_2v_3,v_3v_4v_5\}$, let $\tilde{u}=g-T$ and again by~\Cref{conpath}, $\tilde{u}$ has to be a 2-center in $U$. So $\tilde{u}\in \{u_1,u_2,u_3\}$. If $\tilde{u}=u_2$, then $u_3u_4u_5,u_5u_6u_1,h=u_8v_3u_1,v_1v_2v_3,g$ form a $P_5$, a contradiction. If $\tilde{u} = u_3$, then $u_2u_7u_8, h=u_8u_1v_3, v_3v_2v_1, g, u_3u_4u_5$ form a $P_5$, a contradiction. So $\tilde{u} = u_1$, which implies that $|E_2(T)-\{e_1,e_2\}|\le 2$ by linearity. Therefore, $|E(T)|\le 5<\frac{15}{11}|T|$, contradicting~\Cref{contradiction}.\\
    
	(4) $U=L_8$.
     \begin{figure}[h]
	\centering
	\begin{tikzpicture}[scale=0.7, every node/.style={circle, fill, inner sep=1.2pt}]
		\node[fill=red] (v1) at (-1, 0) {};
		\node[fill=red] (v3) at (1, 0) {};
		\node[fill=red] (v5) at (-1, 2) {};

		\draw[black, thick] (v1) -- (v3);
		\draw[black, thick] (v1) -- (v5);
        \node[fill=red] (v4) at (-1, 1) {};
		\node[fill=red] (v2) at (0, 0) {};
        
		\draw[red, thick] (v3) -- (v4);
		\draw[red, thick] (v2) -- (v5);
		\draw[red, thick] (v3) -- (v5);
		\draw[red, thick] (v2) -- (v4);

		\node[draw=none,fill=none] at (-1.45, 2) {$v_1$};
		\node[draw=none,fill=none] at (-1.45, 1) {$v_2$};
		\node[draw=none,fill=none] at (-1.45, 0) {$v_3$};
		\node[draw=none,fill=none] at (0.4, -0.35) {$v_4$};
		\node[draw=none,fill=none] at (1.5, 0) {$v_5$};
		\node[draw=none,fill=none] at (0, -1) {$V$};
		
	\end{tikzpicture}%
    \hspace{0.4cm}
	\begin{tikzpicture}[scale=0.7, every node/.style={circle, fill, inner sep=1.2pt}]
	\node[fill=red] (v1) at (-1, 0) {};

	\node[fill=red] (v3) at (1, 0) {};
	\node[fill=black] (v4) at (-1, 1) {};
	\node[fill=black] (v5) at (-1, 2) {};
	\node[fill=black] (v6) at (-0.5, 1) {};
	\node[fill=black] (v7) at (0, 2) {};
	\node[fill=black] (v8) at (1, 1) {};
	\node[fill=black] (v9) at (1, 2) {};

	\draw[black, thick] (v1) -- (v3);
	\draw[black, thick] (v1) -- (v5);
	\draw[black, thick] (v1) -- (v7);
	\draw[black, thick] (v3) -- (v9);
	\draw[red, thick] (v1) -- (v8);
	\draw[red, thick] (v1) -- (v9);
	\node[fill=red] (v2) at (0, 0) {};
	\node[draw=none,fill=none] at (-1.45, 2) {$u_7$};
	\node[draw=none,fill=none] at (-1.45, 1) {$u_6$};
	\node[draw=none,fill=none] at (-1.45, 0) {$u_5$};
	\node[draw=none,fill=none] at (0.4, -0.35) {$u_4$};
	\node[draw=none,fill=none] at (-0.1, 0.65) {$u_8$};
	\node[draw=none,fill=none] at (0.2, 1.5) {$u_9$};
	\node[draw=none,fill=none] at (1.5, 2) {$u_1$};
	\node[draw=none,fill=none] at (1.5, 1) {$u_2$};
	\node[draw=none,fill=none] at (1.5, 0) {$u_3$};
	\node[draw=none,fill=none] at (0, -1) {$L_8$};
\end{tikzpicture}	
\end{figure}

Let $S = \{u_6, u_7, u_8, u_9\}$. By linearity, $E_3(S)=\emptyset$. For any edge $h$ in $E_2(S)$, let $\tilde{u}=h-S$. If $\tilde{u}\in V$, then $u_1u_2u_3,u_3u_4u_5,u_5u_6u_7,h$ and an edge in $E_3(V)$ form a $P_5$, a contradiction. If $\tilde{u}\notin\{u_3,u_4,u_5\}$, then there is a $L_{11}$ consisting of $h,u_7u_6u_5,u_5u_8u_9$ and $u_5u_3u_4$, which is disjoint from $V$, contradicting (2). If $\tilde{u}\in\{u_3,u_4\}$, there is still a $L_{11}$ or $L_{12}$, consisting of $h,u_7u_6u_5,u_5u_3u_4$ and $u_1u_2u_3$, which is disjoint from $V$, contradicting case (2) or case (3). So $E_2(S)\subseteq\{u_5u_6u_7,u_5u_8u_9\}$.

For any edge $g$ in $E_1(S)$, if $g\cap V=\emptyset$, it is easy to find a $P_4,C_4,L_{11}$ or $L_{12}$ disjoint from $V$, contradicting~\Cref{4P2}, case (2) or case (3). Thus $g$ connects $V$ and $U$. However, vertices in $S$ are not 2-centers or vertices of center pairs, so by~\Cref{conpath}, $v_3\in g$. Further, $g-\{v_3\}\cup S\in\{u_3,u_4\}$, or we can easily find a $P_5$ in $G$, a contradiction. So by linearity, $|E_1(S)|\le 2$, and thus $|E(S)|\le 4<\frac{15}{11}|S|$, contradicting~\Cref{contradiction}.\\
    
	(5) $U=L_{13}$.
    \begin{figure}[h]
	\centering
	\begin{tikzpicture}[scale=0.7, every node/.style={circle, fill, inner sep=1.2pt}]
		\node[fill=red] (v1) at (-1, 0) {};
		\node[fill=red] (v3) at (1, 0) {};
		\node[fill=red] (v5) at (-1, 2) {};
		
		\draw[black, thick] (v1) -- (v3);
		\draw[black, thick] (v1) -- (v5);
        \node[fill=red] (v4) at (-1, 1) {};
		\node[fill=red] (v2) at (0, 0) {};
        
		\draw[red, thick] (v3) -- (v4);
		\draw[red, thick] (v2) -- (v5);
		\draw[red, thick] (v3) -- (v5);
		\draw[red, thick] (v2) -- (v4);

		\node[draw=none,fill=none] at (-1.45, 2) {$v_1$};
		\node[draw=none,fill=none] at (-1.45, 1) {$v_2$};
		\node[draw=none,fill=none] at (-1.45, 0) {$v_3$};
		\node[draw=none,fill=none] at (0.4, -0.35) {$v_4$};
		\node[draw=none,fill=none] at (1.5, 0) {$v_5$};
		\node[draw=none,fill=none] at (0, -1) {$V$};
		
	\end{tikzpicture}%
    \hspace{0.4cm}
	\begin{tikzpicture}[scale=0.7, every node/.style={circle, fill, inner sep=1.2pt}]
	\node[fill=red] (v1) at (-1, 0) {};
	\node[fill=red] (v3) at (1, 0) {};
	\node[fill=black] (v4) at (-1, 1) {};
	\node[fill=black] (v5) at (-1, 2) {};
	\node[fill=black] (v6) at (0, 1) {};
	\node[fill=black] (v7) at (0, 2) {};
	\node[fill=black] (v8) at (1, 1) {};
	\node[fill=black] (v9) at (1, 2) {};
	
	\draw[black, thick] (v1) -- (v3);
	\draw[black, thick] (v1) -- (v5);

	\draw[black, thick] (v3) -- (v9);
	\node[fill=red] (v2) at (0, 0) {};
	\draw[black, thick] (v2) -- (v7);
	\node[draw=none,fill=none] at (-1.45, 2) {$u_5$};
	\node[draw=none,fill=none] at (-1.45, 1) {$u_4$};
	\node[draw=none,fill=none] at (-1.45, 0) {$u_1$};
	\node[draw=none,fill=none] at (0.4, -0.35) {$u_2$};
	\node[draw=none,fill=none] at (0.4, 0.65) {$u_6$};
	\node[draw=none,fill=none] at (0.4, 1.65) {$u_7$};
	\node[draw=none,fill=none] at (1.5, 2) {$u_9$};
	\node[draw=none,fill=none] at (1.5, 1) {$u_8$};
	\node[draw=none,fill=none] at (1.5, 0) {$u_3$};
	\node[draw=none,fill=none] at (0, -1) {$L_{13}$};
\end{tikzpicture}	
\end{figure}

Let $S = \{u_4, u_5, u_6, u_7, u_8, u_9\}$.	By~\Cref{4P2}, $E_3(S)= \emptyset$. For any edge $h\in E_2(S)$, let $\tilde{u}=h-S$. If $\tilde{u}\in V$, there is a $P_5$ consisting of three edges in $U$, the edge $h$ and one edge in $V$. If $\tilde{u}\notin V$, there is a $P_4$ disjoint from $V$, contradicting~\Cref{4P2}.  
Similarly to avoid presence of $P_4$ or $C_4$ disjoint from $V$, for any edge $g\in E_1(S)$, $g$ must connect $V$ and $U$. Moreover, by~\Cref{conpath}, $v_3\in g$. Finally, the vertex $g-\{v_3\}\cup S\in\{u_1,u_2,u_3\}$, or we can find a $P_5$ in $G$, a contradiction. So by linearity, $|E_1(S)|\le 3$ and thus $|E(S)|\le 6<\frac{15}{11}|S|$, contradicting~\Cref{contradiction}.
\end{proof}
\begin{lem}\label{33}
Let $U$ and $V$ be two disjoint non-star subgraphs in $G$, then $|E(U\cup V)|\leq |E_3(U)|+|E_3(V)|+a(U)+a(V)$, where $a(X)$ is the number of center pairs in $X$, $X=U,V$.
\end{lem}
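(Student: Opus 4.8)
The plan is as follows. Write $E(U\cup V)$ as the disjoint union of the edges inside $U$ (that is, $E_3(U)$), the edges inside $V$ ($E_3(V)$), and the remaining \emph{boundary} edges
\[
F \;:=\; E(U\cup V)\setminus\bigl(E_3(U)\cup E_3(V)\bigr);
\]
since $|E(U\cup V)|=|E_3(U)|+|E_3(V)|+|F|$, the lemma is equivalent to $|F|\le a(U)+a(V)$. I would derive this from the single claim that \emph{every edge $e\in F$ contains a center pair of $U$ or a center pair of $V$}. Indeed, granting the claim, assign to each $e\in F$ one such center pair; by linearity no two vertices lie in two common edges, so this assignment is injective into the union of the sets of center pairs of $U$ and of $V$ (which are disjoint, $U$ and $V$ being disjoint), hence $|F|\le a(U)+a(V)$. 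Throughout one assumes $U,V$ connected, reducing the general statement to the components a boundary edge meets.

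To prove the claim, take $e\in F$; since $U$ and $V$ are disjoint, $e$ meets $U\cup V$ but lies inside neither, so say $e$ meets $U$, in one or two vertices. First suppose $e$ also meets $V$, so $e$ is an edge connecting $U$ and $V$. As $U$ and $V$ are non-star they have no $1$-center, so $e$ vacuously avoids the centers of $U$ and of $V$, and the first alternative of \Cref{conpath} applies: $e$ is a $2$-center of one of $U,V$ together with a center pair of the other. In particular $e$ contains a center pair of $U$ or of $V$, as wanted.

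It remains to treat an edge $e\in F$ meeting $U$ but missing $V$, so $e$ has a vertex $z$ outside $U\cup V$; suppose toward a contradiction that $e$ contains no center pair of $U$. If $e$ meets $U$ in one vertex, that vertex is not a $1$-center of $U$ (there is none); if $e$ meets $U$ in two vertices, they do not form a center pair of $U$ by supposition. In either case the path-prolongation observation from the proof of \Cref{conpath} yields a linear path $P_3$, call it $Q$, starting from $e$ inside $U\cup e$; thus $Q$ uses $z$ and is disjoint from $V$. Since $V$ is connected and non-star it has two adjacent edges, i.e.\ a copy $W$ of $P_2$, which is disjoint from $U\cup e$ and hence from $Q$. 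Deleting the five vertices of $W$ from $G$, the path $Q$ survives, and the component of $G-V(W)$ containing $Q$ also contains all of $U$ (connected, disjoint from $V(W)$, and sharing a vertex with $Q$) together with the edge $e$; thus this component has at least $|E(U)|+1\ge 4$ edges while containing a $P_3$, contradicting \Cref{I3P2}. (Alternatively, should $U\cup e$ contain a $P_4$ or $C_4$, one invokes \Cref{4P2} with the disjoint $P_2\subseteq V$.) Hence $e$ contains a center pair of $U$, proving the claim and the lemma.

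I expect the main obstacle to be precisely the dangling case just handled: making sure the path-prolongation observations from the proof of \Cref{conpath} genuinely apply (in particular that the outside vertex $z$ really enters the path $Q$, so the enclosing component picks up an extra edge and cannot be a bare $P_3$), and carrying out the reduction to the connected case cleanly when $U$ or $V$ is disconnected. The connecting case via \Cref{conpath} and the final injective count are then routine.
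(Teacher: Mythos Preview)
Your proof is correct and follows essentially the same route as the paper: decompose $E(U\cup V)$ into $E_3(U)$, $E_3(V)$, and the boundary edges, then show every boundary edge contains a center pair of $U$ or $V$, handling connecting edges via \Cref{conpath} and non-connecting ones by producing a $P_3$ in $U\cup e$ that, together with a $P_2$ inside $V$, contradicts \Cref{I3P2}. You are slightly more explicit than the paper in naming the $P_2$ to delete and in spelling out the injectivity of the edge-to-center-pair assignment, but the argument is the same.
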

\begin{proof}
It suffices to show that for any edge $e\in E(U\cup V)-E_3(U)- E_3(V)$, $e$ contains a center pair of $U$ or $V$. 

Suppose $e$ is a connecting edge, by~\Cref{conpath}, $e$ contains  a center pair of $U$ or $V$ since $U$ or $V$ has no center.

Suppose $e\in E(U)-E_3(U)$ is not a connecting edge. If $e\in E_1(U)$, since $U$ has no center, there exists a $P_3$ contained in $U\cup\{e\}$ disjoint from $V$. However, $U$ has at least 3 edges since it is not a star, so $U\cup\{e\}$ has at least 4 edges, and thus is not an induced $P_3$, contradicting~\Cref{I3P2}. If $e\in E_2(U)$ and $e\cap U$ is not a center pair of $U$, by definition there also exists be a $P_3$ contained in $U\cup\{e\}$ disjoint from $V$, contradicting~\Cref{I3P2} for the same reason. As a result, $e$ always contains a center pair of $U$ and vice versa.
\end{proof}
\begin{coro}\label{PC33}
    There is no disjoint $P_3$ and $P_3$, or $P_3$ and $C_3$, or $C_3$ and $C_3$ in $G$.
\end{coro}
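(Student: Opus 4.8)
The plan is to apply~\Cref{33} to the two disjoint copies and compare the resulting bound with~\Cref{contradiction}. Suppose, for contradiction, that $G$ contains two vertex-disjoint subgraphs $U$ and $V$, each isomorphic to $P_3$ or to $C_3$. Both $P_3$ and $C_3$ are connected and their three edges do not all pass through a single vertex, so they are non-star (equivalently, have no center), and~\Cref{33} applies:
\[
|E(U\cup V)|\ \le\ |E_3(U)|+|E_3(V)|+a(U)+a(V).
\]

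Next I would pin down the constants on the right-hand side. For the center-pair counts, a direct inspection gives $a(P_3)=4$ (in the labelling $P_3=u_1u_2\cdots u_7$ the center pairs are $\{u_3,u_6\}$, $\{u_3,u_7\}$, $\{u_1,u_5\}$, $\{u_2,u_5\}$) and $a(C_3)=3$ (the three pairs already drawn in~\Cref{fgamma}). For the edge counts: a linear $3$-graph on the six vertices of a copy of $C_3$ has at most four edges, because five triples would cover all $\binom{6}{2}=15$ pairs, forming a nonexistent $STS(6)$; hence $|E_3(C_3)|\le 4$. For a copy of $P_3$ I claim $|E_3(P_3)|=3$: the disjoint $V$ contains a $P_2$, and after deleting the vertices of that $P_2$ the copy $U$ lies inside a single component of the remaining graph, which by~\Cref{I3P2} must be exactly a $P_3$; since this component contains $V(U)$ yet has only three edges, no edge of $G$ besides the three path edges lies inside $V(U)$.

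Combining these facts, $|E_3(X)|+a(X)\le 7$ for $X\in\{P_3,C_3\}$ ($3+4$ for $P_3$ and at most $4+3$ for $C_3$), so~\Cref{33} gives $|E(U\cup V)|\le 14$. On the other hand $|V(U)\cup V(V)|\ge 6+6=12$, so~\Cref{contradiction} forces $|E(U\cup V)|\ge \tfrac{15}{11}\cdot 12=\tfrac{180}{11}>14$, a contradiction. This disposes of all three listed cases simultaneously.

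The only step that needs genuine care is the equality $|E_3(P_3)|=3$, that is, excluding additional edges among the seven vertices spanned by a copy of $P_3$; this is exactly where~\Cref{I3P2} (rather than the weaker~\Cref{4P2}) is needed, since a four-edge linear $3$-graph on seven vertices that contains a $P_3$ need not contain a $P_4$ or $C_4$. Everything else is a straightforward substitution of small-graph constants into~\Cref{33} and~\Cref{contradiction}, so I anticipate no substantial obstacle.
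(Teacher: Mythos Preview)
Your argument is correct and matches the paper's: apply~\Cref{33} to the pair of disjoint copies and compare the resulting bound with~\Cref{contradiction}. You are in fact more explicit than the paper about why $|E_3(U)|$ stays small---invoking~\Cref{I3P2} for the $P_3$ case and the nonexistence of an $STS(6)$ for the $C_3$ case---whereas the paper simply records $|E_3(U)|+a(U)=|U|$ without further comment; your slightly weaker bound $|E_3(C_3)|\le 4$ still yields $|E(U\cup V)|\le 14<\tfrac{15}{11}\cdot 12$, so nothing is lost.
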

\begin{proof}
    \begin{figure}[h]
	\centering
		\begin{tikzpicture}[scale=0.7, every node/.style={circle, fill, inner sep=1.2pt}]
		\node[fill=red] (v1) at (-1, 0) {};
		\node[fill=red] (v3) at (1, 0) {};
		\node[fill=black] (v4) at (-1, 1) {};
		\node[fill=black] (v5) at (-1, 2) {};
		\node[fill=black] (v6) at (1, 1) {};
		\node[fill=black] (v7) at (1, 2) {};	
		
		\draw[black, thick] (v1) -- (v5);
		\draw[black, thick] (v3) -- (v7);
		\draw[black, thick] (v1) -- (v3);
		\node[fill=red] (v2) at (0,0) {};
		\draw[red, thick] (v1) -- (v7);
		\draw[red, thick] (v1) -- (v6);
		\draw[red, thick] (v3) -- (v4);
		\draw[red, thick] (v3) -- (v5);
	\end{tikzpicture}
	\hspace{0.4cm}
	\begin{tikzpicture}[scale=0.7, every node/.style={circle, fill, inner sep=1.2pt}]
	\node[fill=red] (v1) at (-1, 0) {};

	\node[fill=red] (v3) at (1, 0) {};

	\node[fill=red] (v5) at (-1, 2) {};

	\draw[black, thick] (v1) -- (v3);
	\draw[black, thick] (v1) -- (v5);
	\draw[black, thick] (v3) -- (v5);
	\draw[red, thick] (v3) -- (v4);
	\draw[red, thick] (v2) -- (v5);
	\node[fill=red] (v4) at (-1, 1) {};
	\node[fill=red] (v2) at (0, 0) {};
	\node[fill=red] (v6) at (0, 1) {};
	\draw[red, thick] (v1) -- (v6);
\end{tikzpicture}
\end{figure}
There are 3 edges, 4 center pairs and 7 vertices in $P_3$, and there are 3 edges, 3 center pairs and 6 vertices in $C_3$. So $|E_3(U)|+a(U)=|U|$ for $U=P_3,C_3$. Thus by~\Cref{33} and~\Cref{contradiction} there is no such disjoint subgraphs.
\end{proof}

\begin{lem}\label{3P2}
After deleting vertices of an arbitrary $P_2$ in $G$, all the components are stars.
\end{lem}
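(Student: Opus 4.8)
The plan is to argue by contradiction. Suppose that after deleting the vertex set of some $P_2$ $V$, with edges $v_1v_2v_3$ and $v_3v_4v_5$, some component $W$ of $G-V$ fails to be a star. A connected linear $3$-graph on at most two edges is a star, so $W$ has at least three edges, and I claim it then contains a $P_3$ or a $C_3$: if two edges of $W$ are disjoint, then the connectivity of $W$ produces a $P_3$ (take the first three edges of a shortest chain joining them; by minimality and linearity the two intermediate intersection vertices are distinct); otherwise $W$ is intersecting, and a connected non-star intersecting linear $3$-graph contains a $C_3$, as one reads off the list of small linear $3$-graphs. Fix such a copy $W'\in\{P_3,C_3\}$ inside $W$. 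By \Cref{I3P2}, if $W$ contains a $P_3$ then $W$ equals $W'=P_3$; otherwise $W$ is intersecting and $P_3$-free, hence (being linear, connected and non-star) $W$ has at most $7$ vertices and at most $7$ edges — up to isomorphism $C_3$, $L_{16}$, the Fano plane, or a subconfiguration thereof. In all cases $W'$ is disjoint from $V$.

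The main move is to enlarge $V$ into a $P_3$ or $C_3$ still disjoint from $W'$, and then apply \Cref{PC33}. By \Cref{contradiction} we have $\delta(G)\ge 2$, so each leaf $v_1,v_2,v_4,v_5$ of $V$ lies in some edge $f\notin\{v_1v_2v_3,v_3v_4v_5\}$; since, by linearity, $V$ carries no edge on its own vertex set besides those two, $f$ has a vertex outside $V(V)$, and a short check shows $\{f,v_1v_2v_3,v_3v_4v_5\}$ is a $P_3$ (if $f$ avoids $\{v_4,v_5\}$) or a $C_3$ (if $f$ meets $\{v_4,v_5\}$). Hence if some leaf of $V$ has such an edge $f$ with $f\cap V(W')=\emptyset$, this enlarged $P_3$/$C_3$ is disjoint from $W'$, contradicting \Cref{PC33}. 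So from now on I may assume property $(\star)$: every edge $f\notin E(V)$ meeting a leaf of $V$ also meets $V(W')$.

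Under $(\star)$ I would bound the edges incident to $S:=V(W)\cup V(V)$. Every such edge is an edge of $W$, one of the two edges of $V$, or a crossing edge with a vertex in $V(W)$ and a vertex in $V(V)$: an edge meeting $V(W)$ but not in $E(W)$ must meet $V(V)$ since $W$ is a component of $G-V$, and by $(\star)$ there is no edge meeting a leaf of $V$ that lies outside $E(V)$ and misses $V(W')$. For a crossing edge avoiding the center $v_3$ of $V$, \Cref{conpath} applies (it contains no center of $V$ or of $W$), so it contains a center pair of $W$ together with a $2$-center of $V$, or a center pair of $V$ together with a $2$-center of $W$; since $V=P_2$ has only four center pairs and $W$ has only a bounded number of center pairs, linearity (no two edges share a pair) caps the number of such crossing edges by a small constant. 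Combined with $|E(W)|\le 7$, $|V(W)|\le 7$ and $|V(V)|=5$, this makes $|E(S)|$ strictly smaller than $\frac{15}{11}|S|$, contradicting \Cref{contradiction} — provided the crossing edges through $v_3$ are also controlled. An edge $\{v_3,p,q\}$ with $q\in V(W)$ is not governed by \Cref{conpath}; I would deal with these by noting that two such edges $\{v_3,p_1,q_1\}$, $\{v_3,p_2,q_2\}$ yield the path $q_1\,p_1\,v_3\,p_2\,q_2$, which prolongs inside the small $P_3$-free graph $W$ and then closes up via a further edge supplied by $\delta(G)\ge 2$, giving a $P_5$ — so at most one edge through $v_3$ reaches $W$, and it is absorbed harmlessly into the count. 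Running this over the finitely many possibilities for $W$ — $P_3$, $C_3$, $L_{16}$, the Fano plane, and its subconfigurations — finishes the proof.

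The crux, where the most care is needed, is exactly the control of edges through the center $v_3$ of $V$: \Cref{conpath} says nothing about them, so one must either exclude them or bound them by constructing an explicit $P_5$, using the smallness of $W$, the bound $\delta(G)\ge 2$, and in places the $2$-connectivity of $G$ from \Cref{lmcon}. Away from those edges the count is slack by a comfortable margin, so the few borderline configurations of $W$ are cleared by direct inspection.
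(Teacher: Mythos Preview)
Your overall plan—extend the $P_2$ $V$ to a $P_3$ or $C_3$ via an edge at a leaf, and if that always hits $W'$ then count $|E(S)|$—is close in spirit to the paper's argument, and the reduction showing that a non-star component $W$ is either exactly $P_3$ (via \Cref{I3P2}) or intersecting and hence contains a $C_3$ is fine. The serious problem is the step you flag as the crux.

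Your claim that at most one edge through $v_3$ reaches $W$ is wrong, and your sketch for it cannot be made to work. Given two edges $\{v_3,p_1,q_1\},\{v_3,p_2,q_2\}$ with $p_i,q_i\in V(W)$, you propose to prolong the $P_2$ $q_1p_1v_3p_2q_2$ inside $W$ to a $P_5$. But in the case you most need it—$W$ intersecting (the $C_3$ case)—any two edges of $W$ meet, so the two extensions you want at the ends necessarily collide and no $P_5$ results. In fact the paper allows up to $\lfloor |V(W)|/2\rfloor=3$ such edges (they only use linearity to bound them), so the ``at most one'' cannot be true in general.

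That error is not cosmetic: it is exactly what makes your count go through. With the honest linearity bound on edges through $v_3$, the $C_3$ case gives $|S|=11$ and
\[
|E(S)|\ \le\ \underbrace{3}_{E(W)}+\underbrace{2}_{E(V)}+\underbrace{4}_{\text{pairs of }V}+\underbrace{6}_{\text{remaining pairs in }V(W)}\ =\ 15\ =\ \tfrac{15}{11}\,|S|,
\]
which does \emph{not} contradict \Cref{contradiction} (that lemma gives $\ge$, not $>$). The paper closes this $1$-edge gap by a genuinely finer analysis: it splits on whether the ``inner'' edge $u_2u_4u_6$ is present, and it uses \Cref{conpath} to force every crossing edge avoiding $v_3$ with two vertices in $U=C_3$ onto one of the three center pairs of $C_3$, together with a linearity argument showing that at most one further pair among $u_2,u_4,u_6$ can be used by an edge through $v_3$. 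That is how they reach $|E(S)|\le 13<15$. Your count, even after fixing the $v_3$ bound, does not get below $15$, so the $C_3$ case does not close.

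Two smaller points. First, you do not actually prove $V(G)=S$; under $(\star)$ a vertex outside $S$ could in principle be connected to $V$ only via $v_3$, and then edges $\{v_3,p,q\}$ with $p,q\notin V(W)$ are in $E(S)$ but fall outside your classification. This can be repaired using $2$-connectivity and \Cref{conpath} (any edge in $G-v_3$ from outside $S$ to $S-v_3$ would give a $P_5$), but it needs to be said. Second, ``running over'' Fano, $L_{16}$, $L_{15}$ by inspection is optimistic: the paper in fact singles out $L_{15}$ as its own case, because its center-pair count is zero and the no-outside-vertex argument uses that specifically.
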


\begin{proof}
It is equivalent to show that $G-P_2$ has no $P_3$ or $C_3$. 

(1) There is no disjoint $P_2$ and $L_{15}$ in $G$.
\begin{figure}[h]
	\centering
	\begin{tikzpicture}[scale=0.7, every node/.style={circle, fill, inner sep=1.2pt}]
		\node[fill=red] (v1) at (-1, 0) {};
		
		\node[fill=red] (v3) at (1, 0) {};
		
		\node[fill=red] (v5) at (-1, 2) {};

		\draw[black, thick] (v1) -- (v3);
		\draw[black, thick] (v1) -- (v5);
		\draw[red, thick] (v3) -- (v4);
		\draw[red, thick] (v2) -- (v5);
		\draw[red, thick] (v3) -- (v5);
		\draw[red, thick] (v2) -- (v4);
		\node[fill=red] (v4) at (-1, 1) {};
		\node[fill=red] (v2) at (0, 0) {};
		\node[draw=none,fill=none] at (-1.45, 2) {$v_1$};
		\node[draw=none,fill=none] at (-1.45, 1) {$v_2$};
		\node[draw=none,fill=none] at (-1.45, 0) {$v_3$};
		\node[draw=none,fill=none] at (0.4, -0.35) {$v_4$};
		\node[draw=none,fill=none] at (1.5, 0) {$v_5$};
		\node[draw=none,fill=none] at (0, -0.8) {$V$};
	\end{tikzpicture}
	\hspace{0.4cm}
	\begin{tikzpicture}[scale=0.7, every node/.style={circle, fill, inner sep=1.2pt}]
		\node[fill=red] (v1) at (-1, 0) {};
		
		\node[fill=red] (v3) at (1, 0) {};
		
		\node[fill=red] (v5) at (-1, 2) {};
		
		\node[fill=red] (v7) at (1, 2) {};
		\draw[black, thick] (v1) -- (v7);
		\draw[black, thick] (v1) -- (v3);
		\draw[black, thick] (v1) -- (v5);
		\draw[black, thick] (v3) -- (v5);

		\node[fill=red] (v4) at (-1, 1) {};
		\node[fill=red] (v2) at (0, 0) {};
		\node[fill=red] (v6) at (0, 1) {};

		\node[draw=none,fill=none] at (-1.45, 2) {$u_1$};
		\node[draw=none,fill=none] at (-1.45, 1) {$u_2$};
		\node[draw=none,fill=none] at (-1.45, 0) {$u_3$};
		\node[draw=none,fill=none] at (0.4, -0.35) {$u_4$};
		\node[draw=none,fill=none] at (0.5, 1) {$u_6$};
		\node[draw=none,fill=none] at (1.5, 0) {$u_5$};
		\node[draw=none,fill=none] at (1.5, 2) {$u_7$};
		\node[draw=none,fill=none] at (0, -0.8) {$U$};
	\end{tikzpicture}

\end{figure}

If not, there is a 2-path $V$ consisting of $e_1=v_1v_2v_3, e_2 = v_3v_4v_5$ and disjoint $U = L_{15}$ consisting of $f_1=u_1u_2u_3,$ $ f_2 = u_3u_4u_5, f_3=u_5u_6u_1, f_4=u_3u_6u_7$. Let $S=U\cup V$. 

Assume there is a vertex $w$ outside $S$. By~\Cref{lmcon} $G-v_3$ is connected, we could suppose there is an edge $g$ that connecting $S-v_3$ and $w$ in $G-v_3$ (by taking appropriate $w$). By~\Cref{conpath}, if $g$ intersect both $U$ and $V$, it must contain a 2-center in $V-v_3$ as well as a center pair in $U$, contradicting the fact that $w\in g$. Then, if $g$ only intersects $U$, since there is no center pair or centers in $U$, there is always a $P_3$ in $U\cup g$ which is not an induced $P_3$, contradicting~\Cref{I3P2}. Finally, if $g$ only intersects $V$, it will form a $P_3$ or $C_3$ with $V$, contradicting~\Cref{PC33}. Thus, there is no vertex outside $S$.

Now it remains to count $E(S)= E_3(V)\cup E_3(U)\cup E_1(V) \cup E_1(U)$. First it is clear that $|E_3(V)|=2,$ $ |E_3(U)|\leq 7$ by linearity. Next by~\Cref{conpath} $|E_1(U)|$ is bounded by the number of center pairs in $V$ and $|E_1(V-v_3)|$ vice versa. Thus $E_1(U)\le 4$ and $|E_1(V-v_3)|=0$. Finally $|E_1(v_3)|\le \lfloor \frac{7}{2}\rfloor = 3$ by linearity. Thus $|E(S)|\leq 16<\frac{15}{11}|S|$, contradicting~\Cref{contradiction}.\\

(2) There is no disjoint $P_2$ and $P_3$ in $G$.
\begin{figure}[h]
	\centering
	\begin{tikzpicture}[scale=0.7, every node/.style={circle, fill, inner sep=1.2pt}]
		\node[fill=red] (v1) at (-1, 0) {};
		
		\node[fill=red] (v3) at (1, 0) {};
		
		\node[fill=red] (v5) at (-1, 2) {};

		\draw[black, thick] (v1) -- (v3);
		\draw[black, thick] (v1) -- (v5);
		\draw[red, thick] (v3) -- (v4);
		\draw[red, thick] (v2) -- (v5);
		\draw[red, thick] (v3) -- (v5);
		\draw[red, thick] (v2) -- (v4);
		\node[fill=red] (v4) at (-1, 1) {};
		\node[fill=red] (v2) at (0, 0) {};
		\node[draw=none,fill=none] at (-1.45, 2) {$v_1$};
		\node[draw=none,fill=none] at (-1.45, 1) {$v_2$};
		\node[draw=none,fill=none] at (-1.45, 0) {$v_3$};
		\node[draw=none,fill=none] at (0.4, -0.35) {$v_4$};
		\node[draw=none,fill=none] at (1.5, 0) {$v_5$};
		\node[draw=none,fill=none] at (0, -0.8) {$V$};
	\end{tikzpicture}
	\hspace{0.4cm}
	\begin{tikzpicture}[scale=0.7, every node/.style={circle, fill, inner sep=1.2pt}]
	\node[fill=red] (v1) at (-1, 0) {};
	\node[fill=red] (v3) at (1, 0) {};
	\node[fill=black] (v4) at (-1, 1) {};
	\node[fill=black] (v5) at (-1, 2) {};
	\node[fill=black] (v6) at (1, 1) {};
	\node[fill=black] (v7) at (1, 2) {};	
	\node[draw=none,fill=none] at (-1.45, 2) {$u_1$};
	\node[draw=none,fill=none] at (-1.45, 1) {$u_2$};
	\node[draw=none,fill=none] at (-1.45, 0) {$u_3$};
	\node[draw=none,fill=none] at (0.4, -0.35) {$u_4$};
	\node[draw=none,fill=none] at (1.5, 2) {$u_7$};
	\node[draw=none,fill=none] at (1.5, 1) {$u_6$};
	\node[draw=none,fill=none] at (1.5, 0) {$u_5$};
	
	\draw[black, thick] (v1) -- (v5);
	\draw[black, thick] (v3) -- (v7);
	\draw[black, thick] (v1) -- (v3);
	\node[fill=red] (v2) at (0,0) {};
	\draw[red, thick] (v1) -- (v7);
	\draw[red, thick] (v1) -- (v6);
	\draw[red, thick] (v3) -- (v4);
	\draw[red, thick] (v3) -- (v5);
	\node[draw=none,fill=none] at (0, -1) {$U$};
\end{tikzpicture}
\end{figure}

If not, then there are a 2-path $V$ consisting of $e_1=v_1v_2v_3, e_2 = v_3v_4v_5$ and another disjoint 3-path $U$ consisting of $f_1=u_1u_2u_3, f_2 = u_3u_4u_5, f_3=u_5u_6u_7$. Let $S=U\cup V$. 

Assume there is a vertex $w$ in $G-S$. By~\Cref{lmcon} $G-v_3$ is connected, thus we could suppose there is an edge $g$ connecting $S-v_3$ and $w$. By~\Cref{I3P2} $g$ must intersect $V$ and by~\Cref{PC33} $g$ must intersect $U$. Thus it connects $V$ and $U$. However, this again contradicting~\Cref{conpath} that $g$ must contain a 2-center in $V-v_3$ as well as a center pair in $U$.

Now it remains to count $E(S) = E_3(V)\cup E_3(U)\cup E_1(V) \cup E_2(V)$. First it is clear that $|E_3(V)|=2$ by linearity and $|E_3(U)|=3$ since $U$ is an induced $P_3$ by~\Cref{I3P2}. 
Next by~\Cref{conpath} $|E_2(V)|\le a(V)=4$ and $|E_1(V-v_3)|\le a(U)=4$. Finally $|E_1(v_3)|\le \lfloor \frac{7}{2}\rfloor = 3$ by linearity. Thus$|E(S)|\leq 16<\frac{15}{11}|S|$, contradicting~\Cref{contradiction}.

(3) There is no disjoint $P_2$ and $C_3$ in $G$.
\begin{figure}[h]
	\centering
	\begin{tikzpicture}[scale=0.7, every node/.style={circle, fill, inner sep=1.2pt}]
		\node[fill=red] (v1) at (-1, 0) {};
		
		\node[fill=red] (v3) at (1, 0) {};
		
		\node[fill=red] (v5) at (-1, 2) {};

		\draw[black, thick] (v1) -- (v3);
		\draw[black, thick] (v1) -- (v5);
		\draw[red, thick] (v3) -- (v4);
		\draw[red, thick] (v2) -- (v5);
		\draw[red, thick] (v3) -- (v5);
		\draw[red, thick] (v2) -- (v4);
		\node[fill=red] (v4) at (-1, 1) {};
		\node[fill=red] (v2) at (0, 0) {};
		\node[draw=none,fill=none] at (-1.45, 2) {$v_1$};
		\node[draw=none,fill=none] at (-1.45, 1) {$v_2$};
		\node[draw=none,fill=none] at (-1.45, 0) {$v_3$};
		\node[draw=none,fill=none] at (0.4, -0.35) {$v_4$};
		\node[draw=none,fill=none] at (1.5, 0) {$v_5$};
		\node[draw=none,fill=none] at (0, -0.8) {$V$};
	\end{tikzpicture}
	\hspace{0.4cm}
	\begin{tikzpicture}[scale=0.7, every node/.style={circle, fill, inner sep=1.2pt}]
		\node[fill=red] (v1) at (-1, 0) {};
		
		\node[fill=red] (v3) at (1, 0) {};
		
		\node[fill=red] (v5) at (-1, 2) {};

		\draw[black, thick] (v1) -- (v3);
		\draw[black, thick] (v1) -- (v5);
		\draw[black, thick] (v3) -- (v5);
		\draw[red, thick] (v3) -- (v4);
		\draw[red, thick] (v2) -- (v5);
		\node[fill=red] (v4) at (-1, 1) {};
		\node[fill=red] (v2) at (0, 0) {};
		\node[fill=red] (v6) at (0, 1) {};
		\draw[red, thick] (v1) -- (v6);
		\node[draw=none,fill=none] at (-1.45, 2) {$u_1$};
		\node[draw=none,fill=none] at (-1.45, 1) {$u_2$};
		\node[draw=none,fill=none] at (-1.45, 0) {$u_3$};
		\node[draw=none,fill=none] at (0.4, -0.35) {$u_4$};
		\node[draw=none,fill=none] at (0.5, 1) {$u_6$};
		\node[draw=none,fill=none] at (1.5, 0) {$u_5$};
		\node[draw=none,fill=none] at (0, -0.8) {$U$};
	\end{tikzpicture}
\end{figure}

If not, then there is a 2-path $V$ consisting of $e_1=v_1v_2v_3, e_2 = v_3v_4v_5$ and another disjoint 3-cycle $U$ consisting of $f_1=u_1u_2u_3, f_2 = u_3u_4u_5, f_3=u_5u_6u_1$. Let $S=U\cup V$. 

Exactly the same as in (1) and (2), it follows that there are no vertices in $G-S$. Thus, $E(S) = E_3(V)\cup E_3(U)\cup E_1(V) \cup E_1(U)$, where $E_1(V)=E_2(U)$, $E_1(U)=E_2(V)$. It suffices to count $E(S)$. 

To start with, assume there is an edge $g = u_2u_4u_6$, then $|E_3(V)|=2$, $|E_3(U)|= 4$, $|E_1(U)|=|E_2(V)|\le 4$ and $|E_1(V)|=|E_2(U)|\le 3$ by linearity. Thus$|E(S)|\leq 13<\frac{15}{11}|S|$, contradicting~\Cref{contradiction}. 

If there is no such $g = u_2u_4u_6$, then $|E_3(V)|=2$, $|E_3(U)|= 3$. We still have $|E_1(U)|=|E_2(V)|\le 4$ by linearity. For $E_1(V)$, first consider $E_1(V-v_3)$ must intersect the center pair in $U$, thus $|E_1(V-v_3)|\le 3$. Then $E_1(v_3)$ can contribute to the increase of $|E_1(V)|$ by using pairs between $u_2,u_4$ and $u_6$, and by linearity there would be at most one such pair. So $|E_1(V)|\le 3+1=4$. Thus $|E(S)|\leq 13<\frac{15}{11}|S|$, contradicting~\Cref{contradiction}. 
\end{proof}

\begin{lem}\label{2P2}
    For $U = S_3$ or $P_2$, there is no disjoint $U$ and $P_2$ in $G$.
\end{lem}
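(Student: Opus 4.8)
The plan is to suppose, for contradiction, that $G$ contains a copy of $U\in\{S_3,P_2\}$ disjoint from a copy of $P_2$, which I write as $V=v_1v_2v_3,v_3v_4v_5$, and to exhibit a vertex set violating~\Cref{contradiction}. The first move is to invoke~\Cref{3P2}: every component of $G-V(V)$ is a star, so the copy of $U$ lies inside a star component $C=S_k$ with some center $w$. This organizes both cases at once. If $U=S_3$ then $k\ge 3$, and it is cleaner to discard the copy of $S_3$ and work with the whole component $C$. If $U=P_2$ then $k\ge 2$; but if $k\ge 3$ then $C$ contains an $S_3$ disjoint from $V$, which the $S_3$ case forbids, so $k=2$, i.e.\ the copy of $U$ is \emph{exactly} a component of $G-V(V)$; applying the same reasoning with the roles of $U$ and $V$ exchanged (legitimate, as $U$ is also a $P_2$) shows $V$ is exactly a component of $G-V(U)$.

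For the star case ($C=S_k$, $k\ge 3$, leaf set $L$, $|L|=2k$), the plan is to bound $|E(L)|$ and contradict~\Cref{contradiction}. Linearity forbids every non-star edge through $w$ from meeting $L$, so $E(L)$ consists of the $k$ star edges plus extra edges avoiding $w$. An extra edge contained in $V(C)$ would form, with two star edges, a $C_3$ inside the star component $C$ — impossible. An extra edge from a leaf to a vertex outside $V(C)\cup V(V)$ extends a length-$2$ path inside $C$ to a $P_4$ or $C_4$ disjoint from $V$, contradicting~\Cref{4P2}, or one step further to a $P_5$; an extra edge with two leaves behaves similarly, creating a $C_3$ that is disjoint from $V$ and hence lies in a star component of $G-V(V)$, impossible. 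Finally, an extra edge connecting $C$ to $V$ must, by~\Cref{conpath} together with the fact that a star has no center pair, consist of exactly one leaf of $C$ and a center pair of $V$; by linearity each center pair of $V$ lies in at most one such edge, and a short $P_5$-analysis (tightest when $k=3$) trims these further. Summing, $|E(L)|$ stays well below $\tfrac{15}{11}\cdot 2k$ for every $k\ge 3$, the desired contradiction.

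For the $P_2$ case, $U$ and $V$ are now whole components of each other's complements, so every edge meeting $V(U)\cup V(V)$ is one of the four edges of $U\cup V$, a connecting edge (a $1$-$2$ split across $V(U),V(V)$, since a $3$-$0$ split would lie inside one component), or an edge of the form $\{u,v,z\}$ with $z$ outside. I would first show there are no vertices outside $V(U)\cup V(V)$: any such vertex reaches $V(U)\cup V(V)$ only through an edge $\{u,v,z\}$ (the other possibilities force that edge inside one of the two components), and a $P_5$-search — chaining $\{u,v,z\}$ through the spine of one $P_2$ and then through an edge of the other — rules all of these out. Once $V(G)=V(U)\cup V(V)$, a pure linearity count finishes: a connecting edge $\{u,v,v'\}$ needs $\{v,v'\}$ to avoid the six pairs covered by the two edges of $V$, so $\{v,v'\}$ is one of the four center pairs of $V$, and by linearity at most one connecting edge uses each; so there are at most $4+4$ connecting edges, whence $|E(G)|\le 2+2+8=12<\tfrac{15}{11}\cdot 10$, contradicting~\Cref{contradiction}.

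The step I expect to be hardest is the $P_5$-bookkeeping in both cases, and especially the elimination of the edges $\{u,v,z\}$ in the $P_2$ case: the connecting edges can route through vertices already lying on the partial path one is growing, so each ``obvious'' $P_5$ must be checked to use $11$ distinct vertices, and the stubborn configurations — an edge through the center $u_3$ of $U$ or $v_3$ of $V$, or a cluster of degree-$2$ vertices whose unique outside edge collapses onto the current path — have to be handled by also examining the second forced edge at $z$ (again of the form $\{u',v',z\}$) or by a direct count, rather than by producing a $P_5$ in one shot. Choosing, in each case, a vertex set small enough for the count to close but large enough to beat $\tfrac{15}{11}$ of its size, and choosing which center to delete when invoking $2$-connectivity (\Cref{lmcon}), are the other points demanding a little care.
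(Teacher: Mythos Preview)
Your plan diverges from the paper's in both parts, and the divergence is exactly where the argument is hardest.

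\textbf{The $S_3$ case.} Your idea is to count $|E(L)|$ for the leaf set $L$ of the star component $C=S_k$ containing the $S_3$. The trouble is the edges through the center $v_3$ of $V$. You correctly note (via~\Cref{conpath}) that a connecting edge avoiding both centers must carry a center pair of $V$; but edges of the form $\{\ell,\ell',v_3\}$ and $\{\ell,v_3,x\}$ with $x\notin V(C)\cup V(V)$ are not governed by~\Cref{conpath}, and nothing you wrote bounds them. In fact the crude count gives $|E(L)|\le k + (\text{edges through }v_3\text{ meeting }L)$, and since $v_3$ may be adjacent to every leaf, this can be as large as $3k$, which never beats $\tfrac{30k}{11}$. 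Your ``short $P_5$-analysis (tightest when $k=3$)'' is precisely the missing content: for an edge $\{\ell_1,v_3,x\}$ one can build a $P_4$ such as $w\ell_5\ell_6,\,w\ell_2\ell_1,\,\ell_1 x v_3,\,v_3v_4v_5$, but closing it to a $P_5$ requires a fifth edge through $v_4,v_5,\ell_5$ or $\ell_6$ that avoids all nine used vertices, and no such edge is guaranteed. The paper sidesteps this by choosing a completely different set: $S=N_G(v_1)\cup N_G(u_1)$ (the neighborhoods of the two star \emph{centers}), together with a small exceptional set $W$, and shows every edge meeting $S-W$ already passes through $v_1$ or $u_1$; this forces every vertex of $S-W$ to have degree~$2$, which kills the count at once.

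\textbf{The $P_2$ case.} Your reduction ``$U$ and $V$ are whole components of each other's complements'' is fine, and so is the observation that every edge through an outside vertex $z$ must be of the form $\{z,u_i,v_j\}$. But your claim that a $P_5$-search then forces $V(G)=V(U)\cup V(V)$ is not supported: when $i=3$ or $j=3$ (the edge goes through a center), chaining through both spines uses up the center twice and no $P_5$ emerges. You flag this as a ``stubborn configuration'' to be handled by ``the second forced edge at $z$'' or ``a direct count,'' but this is the whole difficulty, not a loose end. The paper does \emph{not} prove $V(G)=V(U)\cup V(V)$; instead it shows $E_1(U\cup V)=\emptyset$, bounds the set $W$ of outside vertices carrying an $E_2$-edge by $|W|\le 1$ (a nontrivial claim with its own $P_5$ bookkeeping), and then finishes with a $2$-connectivity argument via~\Cref{lmcon}. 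Your final count $|E(G)|\le 12$ is only valid once outside vertices are eliminated, and that elimination is exactly what you have not done.

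In short: the architecture is reasonable, but both cases bottom out on edges through the center $v_3$ (and, in the second case, $u_3$), and those are not handled by the mechanisms you invoke. The paper's choice of counting set --- neighborhoods of the centers rather than leaves of one star --- is what makes the argument close.
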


\begin{proof}
(1) For the case $U=S_3$:

\begin{figure}[h]
	\centering
	\begin{tikzpicture}[scale=0.7, every node/.style={circle, fill, inner sep=1.2pt}]
		\node[fill=black] (v1) at (-1, 0) {};
		
		\node[fill=black] (v3) at (1, 0) {};
		
		\node[fill=black] (v5) at (-1, 2) {};

		\draw[black, thick] (v1) -- (v3);
		\draw[black, thick] (v1) -- (v5);
		\node[fill=black] (v4) at (-1, 1) {};
		\node[fill=black] (v2) at (0, 0) {};
		\node[draw=none,fill=none] at (-1.45, 2) {$v_3$};
		\node[draw=none,fill=none] at (-1.45, 1) {$v_2$};
		\node[draw=none,fill=none] at (-0.6, -0.35) {$v_1$};
		\node[draw=none,fill=none] at (0.4, -0.35) {$v_4$};
		\node[draw=none,fill=none] at (1.5, -0.35) {$v_5$};
		\node[draw=none,fill=none] at (0, -0.8) {$V$};

		\node[fill=black] (u1) at (3, 0) {};
		\draw[blue,thick](v3) -- (u1);
		\node[fill=blue] (w) at (2, 0) {};
		\node[draw=none,fill=none] at (2.4, -0.35) {$w$};
		\node[fill=black] (u3) at (5, 0) {};
		
		\node[fill=black] (u5) at (3, 2) {};

		\draw[black, thick] (u1) -- (u3);
		\draw[black, thick] (u1) -- (u5);

		\node[fill=black] (u4) at (3, 1) {};
		\node[fill=black] (u2) at (4, 0) {};
		\node[fill=black] (u7) at (4.6, 1.6) {};
		\draw[black,thick](u1) -- (u7);
		\node[fill=black] (u6) at (3.8, 0.8) {};
		\node[draw=none,fill=none] at (2.55, 2) {$u_3$};
		\node[draw=none,fill=none] at (2.55, 1) {$u_2$};
		\node[draw=none,fill=none] at (3.4, -0.35) {$u_1$};
		\node[draw=none,fill=none] at (4.4, -0.35) {$u_4$};
		\node[draw=none,fill=none] at (5.3, 1.5) {$u_7$};
		\node[draw=none,fill=none] at (4.3, 0.5) {$u_6$};
		\node[draw=none,fill=none] at (5.4, -0.35) {$u_5$};
		\node[draw=none,fill=none] at (4, -0.8) {$U$};
		
	\end{tikzpicture}
\end{figure}
If not, there are a 2-star $V$ consisting of $e_1=v_1v_2v_3, e_2 = v_1v_4v_5$ and another disjoint 3-star $U$ consisting of $f_1=u_1u_2u_3, f_2 = u_1u_4u_5, f_3=u_1u_6u_7$. Define $F_V = \{e_1,e_2\}, F_U=\{f_1,f_2,f_3\}$, $S=N_G(v_1)\cup N_G(u_1)$. And we define $W$ to be all the possible vertices outside $U\cup V$ that are incident to $u_1$ as well as one of $v_2,v_3,v_4$ or $v_5$. By linearity $|W|\le 4$. We claim that every edge in $E(S-W)$ is incident to one of $v_1$ or $u_1$. As a result, every vertex in $S-W$ is of degree 2 and thus $|E(S-W)|\leq |S-W|<\frac{15}{11}|S-W|$, contradicting~\Cref{contradiction}. Now it suffices to prove the claim. 

Note that for any vertex $x\in S-W$, by using the edges in $F_V$ and $F_U$ together with the extra edge incident $x$, we can always find disjoint 3-star and 2-star such that $x$ is a leaf of one of the stars, and meanwhile $v_1$ and $u_1$ are the centers of the stars. Thus to prove the claim, it suffices to prove every edge incident to the leaves of $V$ or $U$ is incident to one of $v_1$ or $u_1$. That is, $E(V)\cup E(U)\subseteq E(v_1)\cup E(u_1)$.

First we show $E(U)\subseteq E(u_1)\cup E(v_1)$. Note that by~\Cref{3P2} $E_3(U-u_1)=\emptyset$. Again by~\Cref{3P2}, any edge $f\in E_2(U-u_1)-F_U$ must be incident to $V$. Thus $f$ is incident to $v_1$, otherwise there would be a $P_5$ in $G$ consisting of 2 edges in $V$, 2 edges in $U$ and $g$. Still by~\Cref{3P2}, any edge $g\in E_1(U-u_1)$ intersects $V$. If $g\in E_2(V)$, without loss of generosity we can assume $g=\{u_3,v_3,v_5\}$, and there would be disjoint $C_3$ consisting of $g,e_1,e_2$ and $P_2$ consisting of $f_2,f_3$, contradicting~\Cref{3P2}. So $g\in E_1(V)$ and $f \cap V = v_1$, otherwise there would be a $P_5$ in $G$. Thus $E(U)\subseteq E(v_1)\cup E(u_1)$. 

Similarly, $E_3(V-v_1)=\emptyset$ by linearity. Any edge in $E_2(V-v_1)$ or $E_1(V-v_1)$ must intersect $u_1$, otherwise there would be a pair of disjoint 3-cycle and 2-star or a $P_5$ in $G$, contradiction. Thus $E(V)\subseteq E(v_1)\cup E(u_1)$.

(2)For the case $U=P_2$:

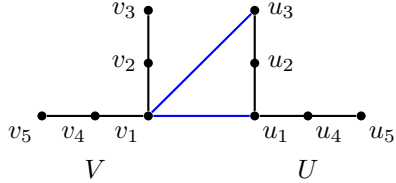
\begin{figure}[h]
	\centering
	\begin{tikzpicture}[scale=0.7, every node/.style={circle, fill, inner sep=1.2pt}]
		\node[fill=black] (v1) at (-1, 0) {};
		
		\node[fill=black] (v3) at (1, 0) {};
		
		\node[fill=black] (v5) at (1, 2) {};

		\draw[black, thick] (v1) -- (v3);
		\draw[black, thick] (v3) -- (v5);
		\node[fill=black] (v4) at (1, 1) {};
		\node[fill=black] (v2) at (0, 0) {};
		\node[draw=none,fill=none] at (0.55, 2) {$v_3$};
		\node[draw=none,fill=none] at (0.55, 1) {$v_2$};
		\node[draw=none,fill=none] at (-1.4, -0.35) {$v_5$};
		\node[draw=none,fill=none] at (-0.4, -0.35) {$v_4$};
		\node[draw=none,fill=none] at (0.6, -0.35) {$v_1$};
		\node[draw=none,fill=none] at (0, -1) {$V$};
		
		\node[fill=black](u1) at (3, 0) {};
		\node[fill=black](u3) at (5, 0) {};
		\node[fill=black](u5) at (3, 2) {};
		\draw[black, thick] (u1) -- (u3);
		\draw[black, thick] (u1) -- (u5);
		\node[fill=black] (u4) at (3, 1) {};
		\node[fill=black](u2) at (4, 0) {};
		
		\draw[blue, thick] (v3) -- (u1);
		\draw[blue, thick] (v3) -- (u5);
		
		\node[draw=none,fill=none] at (3.5, 2) {$u_3$};
		\node[draw=none,fill=none] at (3.5, 1) {$u_2$};
		\node[draw=none,fill=none] at (3.4, -0.35) {$u_1$};
		\node[draw=none,fill=none] at (4.4, -0.35) {$u_4$};
		\node[draw=none,fill=none] at (5.4, -0.35) {$u_5$};
		\node[draw=none,fill=none] at (4, -1) {$U$};

	\end{tikzpicture}
		\caption{Two cases of connecting outside vertices}
		\label{F2P2}
\end{figure}

If not, there are a 2-star $V$ consisting of $e_1=v_1v_2v_3, e_2 = v_1v_4v_5$ and another disjoint 2-star $U$ consisting of $f_1=u_1u_2u_3, f_2 = u_1u_4u_5$. Let $S = U\cup V$.
Firstly, $E_1(S) = \emptyset$ by (1) and~\Cref{3P2}. Next, by linearity $E_3(S)\leq 12$. So there are at least two edges in $E_2(S)$, otherwise $|E(S)|\leq 13<\frac{15}{11}|S|$, contradicting~\Cref{contradiction}.
For any edge $f\in E_2(S)$ and $w = f -S$, note that $f$ must be incident to at least one of $u_1$ and $v_1$ by~\Cref{3P2}, and thus we have $f = \{w, v_1, u_1\}$, where $i=1$ or $j=1$. Let $W=\{w\in V(G)-S\mid \exists\; f\in E_2(S),\text{ s.t. }w=f-S \text{ and } v_1wu_1\notin E(G)\}$.

\begin{clm}\label{Exonly}
	$|W|\leq 1$.
\end{clm}

\begin{figure}[h]
	\centering
	\begin{tikzpicture}[scale=0.7, every node/.style={circle, fill, inner sep=1.2pt}]
		\node[fill=black] (v1) at (-1, 0) {};
		
		\node[fill=black] (v3) at (1, 0) {};
		
		\node[fill=black] (v5) at (1, 2) {};

		\draw[black, thick] (v1) -- (v3);
		\draw[black, thick] (v3) -- (v5);
		\node[fill=black] (v4) at (1, 1) {};
		\node[fill=black] (v2) at (0, 0) {};
		\node[draw=none,fill=none] at (0.55, 2) {$v_3$};
		\node[draw=none,fill=none] at (0.55, 1) {$v_2$};
		\node[draw=none,fill=none] at (-1.4, -0.35) {$v_5$};
		\node[draw=none,fill=none] at (-0.4, -0.35) {$v_4$};
		\node[draw=none,fill=none] at (0.6, -0.35) {$v_1$};
		\node[draw=none,fill=none] at (0, -1) {$V$};
		
		\node[fill=black](u1) at (3, 0) {};
		\node[fill=black](u3) at (5, 0) {};
		\node[fill=black](u5) at (3, 2) {};
		\draw[black, thick] (u1) -- (u3);
		\draw[black, thick] (u1) -- (u5);
		\node[fill=black] (u4) at (3, 1) {};
		\node[fill=black](u2) at (4, 0) {};
		
		\draw[blue, thick] (v5) -- (u1);
		\draw[blue, thick] (v3) -- (u5);
		\node[fill=blue] (w1) at (1.6, 0.6) {};
		\node[fill=blue](w2) at (2.4, 0.6) {};
		\node[draw=none,fill=none] at (2.4, 0.05) {$w_2$};
		\node[draw=none,fill=none] at (1.6, 0.05) {$w_1$};

		\node[draw=none,fill=none] at (3.5, 2) {$u_3$};
		\node[draw=none,fill=none] at (3.5, 1) {$u_2$};
		\node[draw=none,fill=none] at (3.4, -0.35) {$u_1$};
		\node[draw=none,fill=none] at (4.4, -0.35) {$u_4$};
		\node[draw=none,fill=none] at (5.4, -0.35) {$u_5$};
		\node[draw=none,fill=none] at (4, -1) {$U$};
		
	\end{tikzpicture}
\end{figure}

\textit{Proof of~\Cref{Exonly}}: We first suppose there exist two different vertices $w_1,w_2\in W$, such that $v_1w_1u_3,$ $u_1w_2v_2\in E(G)$ and compute $|E(S\cup W)|$ to obtain a contradiction.
For any $f\in E(w_2)- E(S)$, the five edges $w_2v_3u_1,u_1u_2u_3,$ $u_3w_1v_1,v_1v_4v_5$ and $f$ form a $P_5$ in $G$, a contradiction, and thus $E(w_2)\subseteq E(S)$.
Since $E_1(S)=\emptyset$, we have $E(w_2)\subseteq E_2(S)$. By symmetry, $E(W)\subseteq E_2(S)$. By the definition of $W$, it follows that $E(W)=E_2(S)-E_2(\{v_1,u_1\})$. Since each vertex in $G$ has degree at least 2, by linearity each vertex in $W$ has degree exactly 2, and is adjacent to both of $v_1,u_1$.
Thus $|E_2(S)|\leq 1+|E(W)|\leq 2|W|+1$. Further by linearity, $|W|\le 4$.

Consider a new pair of disjoint $P_2$, one consisting of $v_1v_4v_5$ and $v_1w_1u_3$ and the other consisting of $u_1u_4u_5$ and $u_1w_2v_3$. Now $v_2$ and $u_2$ plays the role of $w_1,w_2$ for this pair of $P_2$. Similarly to the previous discussion, 
the two vertices $v_2,u_2$ have degree 2.
So edges in $E_3(S)$ consists of $e_2$, $f_2$, at most four edges incident to $v_2,u_2$ and the edges in $F\triangleq E_3(S-\{v_2,u_2\})-F_U-F_V$. By linearity, it is easy to see that $|F|\leq 4$. In conclusion, we have
$$|E(S\cup W)|=|E_2(S)|+|E_3(S)|\leq 2|W|+1+2+4+4< \frac{15}{11}|S\cup W|,\; |W|\le 4,$$
contradicting~\Cref{contradiction} and thus there do not exist two different vertices $w_1,w_2\in W$, such that $v_1w_1u_3,$ $u_1w_2v_2\in E(G)$

If $|W|\geq 2$, then by the previous discussion, without loss of generosity we can assume $W$ is not adjacent to $u_1$.
Then $E_2(S-v)=E_1(S-v)=\emptyset$ and $W$ is not connected to $S-v_1$ in $G-v_1$, contradicting the 2-connectivity of $G$~\Cref{lmcon}. That is, $|W|\leq 1$.
\hfill\qedsymbol

Now we continue on the proof of~\Cref{2P2}.
Let $W'=W\cup \{x\in V(G)\;:\; v_1xu_1\in E(G)\}$. Then $|W'|\leq |W|+1\leq 2$. If $V(G)=S\cup W'$, then $E_2(S)=E(W')=E(W')\cap E(\{v_1,u_1\})$ and by linearity $d_G(w)\leq 2$ for any $w\in W'$. Thus $E(G)\leq E_3(S)+E_2(S)\leq 8+2|W'|<\frac{15}{11}|S\cup W'|$. Thus $S\cup W'\subsetneq V(G)$. Since $G-W'$ is disconnected, by~\Cref{lmcon} $|W'|=2$. We may assume $W=\{w\}$ and $W'=\{w,x\}$. Since $w\in W$, by symmetry we can assume $v_1wu_3\in E(G)$.
If $E(w)-E(S)\neq \emptyset$, assume $wy_1y_2\in E(w)-E(S)$. Since $\delta(G)\geq 2$ and $y_1\notin W$, there exists an edge $y_1z_1z_2\in E(G)-E(S\cup \{w,y_1,y_2\})$. Then $f_2,f_1,u_3wv_1,wy_1y_2,y_1z_1z_2$ form a $P_5$ in $G$, a contradiction. Thus we can let $E(w)=\{u_3wv_1, v_3wu_1\}$ and thus $w,z$ are not adjacent in $G$. Now note that $z$ is disconnect to $S$ in $G-v_1$, contradicting~\Cref{lmcon}.
\end{proof}

\section{Proof of~\Cref{main}}\label{Section4}

\begin{defi}
    Given a linear 3-graph $G$ and a vertex set $S$ of $G$. The edge-labeled link graph $(H,l)$ of $S$, denoted by $(H,l)=L(G,S)$, is the simple graph $H$ with an edge labeling $l$, such that $V(H)=V(G)-S,  E (H)=\{xy: x,y\in V(H) \mbox{ and }(x,y,z)\in E (G) \mbox{ for some } z\in S \}$, and $l(xy)=z$ if $\{x, y, z\} \in E (G)$ for some  $ z \in S $. Note that $l$ is well-defined by the linearity of $G$.
\end{defi}

\begin{lem}\label{lm8}
	For any vertex $v$ in $G$, if $d_G(v)\ge 6$, then $d_G(u)\le 3$ for any neighbor $u$ of $v$. If $d_G(v)\ge 5$, then $d_G(u)\le 2$ for any non-neighbor $u$ of $v$.
\end{lem}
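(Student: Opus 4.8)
The plan is to argue each half by contradiction, using \Cref{S2S2} in the following contrapositive form: if we delete the five vertices of a $P_2$ from $G$, the remaining hypergraph has no $P_2$, hence no two of its edges share a vertex; in particular, any single vertex $v$ lying outside the deleted set keeps at most one of its incident edges. So the strategy is to manufacture, from an assumed high-degree neighbor (or non-neighbor) $u$ of $v$, a $P_2$ centered at $u$ and disjoint from $v$, and then count how many edges through $v$ can avoid it.

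\medskip
\noindent\textbf{The neighbor case.} Suppose $d_G(v)\ge 6$ and some neighbor $u$ of $v$ has $d_G(u)\ge 4$. Write $e_0=\{u,v,w\}$ for the edge through $u$ and $v$, and choose three further edges $e_i=\{u,x_i,y_i\}$ ($i=1,2,3$) through $u$. By linearity the edges $e_1,e_2,e_3$ pairwise meet only in $u$, and none of them meets $e_0$ outside $u$; hence $x_1,y_1,x_2,y_2,x_3,y_3$ are six distinct vertices, none of which lies in $\{u,v,w\}$. For each pair $\{i,j\}\subseteq\{1,2,3\}$ the edges $e_i,e_j$ form a $P_2$, call it $Q_{ij}$, with $V(Q_{ij})=\{u,x_i,y_i,x_j,y_j\}$, and $v\notin V(Q_{ij})$. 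Applying \Cref{S2S2} to $Q_{ij}$, at most one edge through $v$ avoids $V(Q_{ij})$. The edge $e_0$ is already destroyed since $u\in V(Q_{ij})$, and any other edge through $v$ that is destroyed must contain one of the four vertices $x_i,y_i,x_j,y_j$; so at least $(d_G(v)-1)-4\ge 1$ edges through $v$ survive. Equality must hold, which forces $d_G(v)=6$ and forces each of $x_i,y_i,x_j,y_j$ to be a neighbor of $v$ lying in its own edge through $v$ (four distinct edges). Letting $\{i,j\}$ range over all three pairs, every pair among $x_1,y_1,x_2,y_2,x_3,y_3$ is covered, so these six vertices lie in six pairwise distinct edges through $v$. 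Since none of them equals $w$, none of these edges is $e_0$, so $v$ is incident to at least seven edges, contradicting $d_G(v)=6$.

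\medskip
\noindent\textbf{The non-neighbor case.} Suppose $d_G(v)\ge 5$ and some non-neighbor $u$ of $v$ has $d_G(u)\ge 3$. Choose three edges $e_i=\{u,x_i,y_i\}$ ($i=1,2,3$) through $u$; since $u$ is not adjacent to $v$, none of them contains $v$, and as before $x_1,y_1,x_2,y_2,x_3,y_3$ are six distinct vertices different from $u$ and $v$. For each pair $\{i,j\}$, the $P_2$ $Q_{ij}=e_i\cup e_j$ again avoids $v$, and now no edge through $v$ contains $u$, so destroying $V(Q_{ij})$ kills only edges through $v$ that meet $\{x_i,y_i,x_j,y_j\}$ — at most four of them. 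Thus at least $d_G(v)-4$ edges through $v$ survive; if $d_G(v)\ge 6$ this is at least two and we contradict \Cref{S2S2} at once, so $d_G(v)=5$, and the same tightness argument run over the three pairs places the six vertices $x_1,y_1,x_2,y_2,x_3,y_3$ in six distinct edges through $v$, which is impossible. This proves $d_G(u)\le 2$.

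\medskip
The only genuinely delicate point is the bookkeeping in the tight case: one must check that the inequality "at least $(d_G(v)-1)-4$ (resp.\ $d_G(v)-4$) edges through $v$ survive" being an equality indeed forces each $x_i,y_i$ to be a neighbor of $v$ sitting alone in its own edge through $v$, and that the three choices of pair $\{i,j\}$ jointly cover every unordered pair of the six vertices (both the ``within-$e_i$'' pairs and the ``across-$e_i,e_j$'' pairs), so that they really do occupy six pairwise distinct edges at $v$. Everything else is an immediate consequence of linearity and \Cref{S2S2}; note in particular that $\delta(G)\ge 2$ from \Cref{contradiction} is not needed here.
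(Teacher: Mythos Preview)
Your proof is correct and follows essentially the same approach as the paper: both arguments reduce immediately to \Cref{S2S2} by observing that any two edges through $u$ (not containing $v$) form a $P_2$ which must therefore meet almost all edges through $v$. The paper packages the count in the link graph $L(G,\{u,v\})$ and finishes with a short pigeonhole (three $u$-edges each hitting two of five $v$-edges forces two $u$-edges to share a vertex, leaving only three $v$-edges covered), whereas you carry out an explicit survivor count and push the equality case to six distinct edges at $v$; the substance is the same.
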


\begin{proof}
	Suppose not, in both cases we consider the edge-labeled link graph $G(v,u)$, denoted by $H$. We can always find five edges labeled $v$ together with three edges labeled by $u$. Then any two edges labeled $u$ must intersect four edges labeled $v$. It implies that each edge labeled by $u$ must intersect two edges labeled by $v$. However, there are only five edges labeled $v$. By the pigeonhole principle, there must be one edge labeled by $v$ intersect two edges labeled by $u$, say $e_1$ and $e_2$ labeled by $u$. Now $e_1$ and $e_2$ can intersect at most 3 edges labeled $v$, a contradiction.
\end{proof}

Now we give a bound on the degree list of a fixed edge of $G$.
For the minimal counterexample $G$, fix any edge $e=\{a,b,c\}\in E (G)$. Denote the edge-labelled link graph $(G(e),l)$ by $(H,l)$. Note that by linearity, every edge in $H$ has exactly one label, and every class of labelled edges forms a matching in $H$. Without loss of generality, we can assume $d_G(a)\geq d_G(b)\geq d_G(c)$ and define degree list of $e$ as $d_G(e) = ((d_G(a),d_G(b),d_G(c))$. We say a triple of numbers $(x_1,x_2,x_3)\geq (y_1,y_2,y_3)$, if $x_i\geq y_i,i=1,2,3$.

\begin{lem}\label{degreelist}
    There is no edge $e = \{a,b,c\}\in E (G)$ such that $d_G(e)\geq (5,5,4)$.
\end{lem}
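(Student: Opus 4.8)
The plan is to suppose, for contradiction, that there is an edge $e=\{a,b,c\}$ with $d_G(a)\ge 5$, $d_G(b)\ge 5$ and $d_G(c)\ge 4$, and to produce from this either a $P_5$ in $G$, or two vertex-disjoint $P_2$'s (contradicting \Cref{S2S2}), or a set $S$ with $|E(S)|<\tfrac{15}{11}|S|$ (contradicting \Cref{contradiction}). I will work with the \emph{petals}: by linearity every edge other than $e$ that meets $\{a,b,c\}$ meets it in exactly one vertex, so $a$ lies in $d_G(a)-1\ge 4$ edges whose intersection with $e$ is $\{a\}$, and similarly $b$ lies in $\ge 4$ petals and $c$ in $\ge 3$ petals; all outer vertices of petals lie outside $\{a,b,c\}$. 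Two petals through a common vertex form a $P_2$ centred at that vertex. For distinct $X,Y\in\{a,b,c\}$ let $\Lambda_{XY}$ be the bipartite graph on (petals of $X$) $\cup$ (petals of $Y$) with an edge joining two petals that share a vertex; since a vertex belongs to at most one petal of $X$ and at most one petal of $Y$, $\Lambda_{XY}$ has maximum degree at most $2$ on each side.

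The first step is the reduction. If $\Lambda_{XY}$ contains two petals of $X$ and two petals of $Y$ spanning no edge of $\Lambda_{XY}$, then these four petals give two vertex-disjoint $P_2$'s, contradicting \Cref{S2S2}. A short counting argument shows that a bipartite graph with maximum degree $\le 2$ on both sides and side sizes $s\ge 4$ and $t\ge 4$ always contains such a ``$2+2$ independent set'' unless $s=t=4$ and the graph is an $8$-cycle or a $6$-cycle together with a disjoint edge; when one side has size $\ge 5$ one wins by taking the two petals of smallest degree. Applying this to $\Lambda_{ab}$ forces $d_G(a)=d_G(b)=5$, and a similar (slightly more involved) analysis of $\Lambda_{ac}$ and $\Lambda_{bc}$ — whose $c$-side has only $3$ or $4$ vertices — forces $d_G(c)\le 5$ and pins each of $\Lambda_{ab},\Lambda_{ac},\Lambda_{bc}$ down to one of a short list of ``extremal'' configurations. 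Since $d_G(a)=5\ge 5$, \Cref{lm8} now applies: every non-neighbour of $a$, and every non-neighbour of $b$, has degree exactly $2$ (using $\delta(G)\ge 2$ from \Cref{contradiction}).

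The second step is a direct structural analysis of this tight regime. Extremality of $\Lambda_{ab}$ means that each $a$-petal meets two $b$-petals in two distinct outer vertices (so it forms a $C_3$ with them), and that the petals of $a$, $b$ and $c$ — together with $e$ — are concentrated on a small vertex set $T$ whose size is bounded by inclusion–exclusion from the many forced overlaps. One then either locates a $P_5$ by taking a $P_3$ of shape $f,e,h$ with $f$ an $a$-petal and $h$ a $c$-petal disjoint from $f$ (such a pair exists since $\Lambda_{ac}$ is far from complete) and extending it through a petal that still has an outer vertex outside $T$, using \Cref{lmcon} to ensure the extending edge does not fold back; or else every such outer vertex has its second (and only other) edge inside $T$, and then a careful count of $E(T)$ — treating the $\le 13$ edges through $\{a,b,c\}$ separately and bounding the rest via $\delta(G)\ge 2$ and the degree-$2$ conclusion of \Cref{lm8} — gives $|E(T)|<\tfrac{15}{11}|T|$, contradicting \Cref{contradiction}. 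I expect this last, maximally entangled case to be the main obstacle: when the petals of $a$, $b$ and $c$ overlap as much as linearity permits, neither a disjoint $P_2$-pair nor an extendable path is visible, so one must simultaneously control \Cref{lm8}, the forced degree-$2$ vertices, the $2$-connectivity of \Cref{lmcon}, and a tight edge count, and squeezing all of this below the $\tfrac{15}{11}$ threshold is the delicate part.
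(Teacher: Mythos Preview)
Your first step is right and is essentially the paper's opening move in disguise: your bipartite graphs $\Lambda_{XY}$ encode the same information as the paper's two-coloured subgraphs $H_i$ of the link graph $H=L(G,e)$, and your ``$2+2$ independent set'' is exactly the paper's ``two disjoint monochromatic pairs of different colours'', both yielding two vertex-disjoint $P_2$'s via \Cref{S2S2}. The paper pushes this a little further than you do (ruling out your ``$C_6$ plus an edge'' possibility by a separate no-isolated-edge argument) to conclude that the red--green graph $H_1$ is a single $8$-cycle on eight vertices, with every $c$-edge a chord.

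Where your plan goes off the rails is the second step. You propose to leave the link graph, invoke \Cref{lm8} and \Cref{lmcon}, hunt for a $P_5$ or for a set $T$ with a sub-$\tfrac{15}{11}$ edge count, and you flag this as ``the main obstacle'' without working it out. None of that is needed, and the vague edge-count you sketch is not obviously below threshold. The paper's actual argument never leaves the link graph and never uses \Cref{lm8}, \Cref{lmcon} or \Cref{contradiction}: once $H_1$ is a $C_8$, each of the three blue ($c$-labelled) edges is a chord, and a short case analysis on chord lengths (length $2$ is immediately bad; two chords of length $3$ leave only one surviving subcase, which cannot accommodate a third chord; two chords of length $4$ likewise) always produces a bichromatic $P_2$ in $H$ disjoint from a second bichromatic $P_2$, hence two disjoint $P_2$'s in $G$ and a contradiction with \Cref{S2S2}. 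That is the whole proof. Your second step should be replaced by this chord analysis; the machinery you reach for is both unnecessary and, as written, not a proof.
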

\begin{proof}
Suppose $(d_G(a),d_G(b),d_G(c))\geq (5,5,4)$, then we could take 4 edges labeled by $a$, 4 edges labeled by $b$ and 3 edges labeled by $c$ in the link graph $G(e)$, denoted by $H$. In the following proof, we color edge of label $a$, $b$ and $c$ by red, green and blue respectively. Note that each color is a matching.

Now consider the subgraph $H_1$ of red and green edges, $H_2$ of red and blue edges, $H_3$ of blue and green edges (delete isolated vertices). Each of them is a union of two matchings, and thus disjoint union of paths and even cycles.


\begin{clm}\label{H123}
   The red-green graph $H_1$ is a $C_8$ and every blue edge is a chord in the red-green cycle.
\end{clm}
\textit{Proof of~\Cref{H123}}: 
To start with, any two monochromatic edges in $H$ correspond to a path of length two in $G$, so there are no two disjoint pairs of monochromatic edges with different colors in $H$ by~\Cref{2P2}. 

Suppose there is a $C_4$ in $H_i$ for some $i=1,2,3$, colored alternatively by $c_1$ and $c_2$. Apparently the $C_4$ is a component of $H_i$. Without loss of generality, we could suppose there are four edges colored in $c_1$. Then there are two edges colored in $c_1$ disjoint from this $C_4$. They are monochromatic and disjoint from two other monochromatic edges colored in $c_2$ in the cycle, a contradiction. To conclude, there is no $C_4$ in $H_i,\forall i\in[3]$.

Suppose there is an isolated edge $e_1$ colored in $c_1$ in $H_i$ for some $i=1,2,3$, and let the other color in $H_i$ be $c_2$. We first claim that $c_2$ must be blue. Otherwise there are four edges colored in $c_2$, we choose arbitrarily another edge $e_2$ colored in $c_1$. Then $e_1$ and $e_2$ intersect at most two edges colored in $c_2$. So there exist two edges colored $c_2$ disjoint from $e_1$ and $e_2$,  which is a pair of monochromatic edges colored in $c_2$, a contradiction. Thus $e_1$ must be red or green and by symmetry we can assume $e_1$ is red and $i=2$.
By the above discussion the red-green graph $H_1$ has no isolated edge, so there is a green edge $e_2$ intersecting $e_1$. Since $e_1$ is disjoint from blue edges, the red-green $P_2$ consisting of $e_1$ and $e_2$ can intersect at most one blue edge. There are three blue edges in total, so there are two blue edges disjoint from this red-green $P_2$. It follows that there are two disjoint $P_2$ in $G$,contradicting~\Cref{2P2}. Therefore, there is no isolated red edge in $H_2$. Similarly, there is no isolated green edge in $H_3$. To conclude, there is no isolated edge in $H_i,\forall i \in [3]$.

Next, let $e_1$ be an edge colored in $c_1$. There always exists another color $c_2$ with four edges colored in $c_2$. We claim that $e_1$ must intersect two edges of $c_2$. Since there is no isolated edge in $H_i$ for any $i=1,2,3$, there is an edge $e_2$ colored in the third color $c_3$ intersecting $e_1$. By~\Cref{2P2}, there are no two edges colored in $c_2$ disjoint from $e_1$ and $e_2$. So $e_1$ and $e_2$ must intersect three edges colored in $c_2$, which implies that both $e_1$ and $e_2$ intersect two edges colored in $c_2$. To conclude, every red edge intersect two green edges, every green edge intersect two red edges, and every blue edge intersect both two red edges and two green edges (See \Cref{local}).

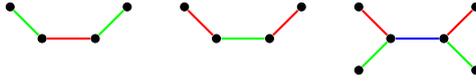
\begin{figure}[h]
	\centering
    \begin{tikzpicture}[scale=0.7, every node/.style={circle, fill, inner sep=1.2pt}]
	\node[fill=black] (v1)at(-1,0) {};
	\node[fill=black] (v2) at (0, 0) {};
	\node[fill=black] (v3) at (0.6, 0.6) {};
	\node[fill = none](v4) at (0.6, -0.6) {};
    \node[fill=black] (v5) at (-1.6, 0.6) {};
	\node[fill = none] (v6) at (-1.6, -0.6) {};
	\draw[red, thick] (v1) -- (v2);
	\draw[green, thick] (v2) -- (v3);
	\draw[green, thick] (v1) -- (v5);
\end{tikzpicture}
\hspace{0.4cm}
\begin{tikzpicture}[scale=0.7, every node/.style={circle, fill, inner sep=1.2pt}]
	\node[fill=black] (v1)at(-1,0) {};
	\node[fill=black] (v2) at (0, 0) {};
	\node[fill=black] (v3) at (0.6, 0.6) {};
	\node[fill = none](v4) at (0.6, -0.6) {};
    \node[fill=black] (v5) at (-1.6, 0.6) {};
	\node[fill = none] (v6) at (-1.6, -0.6) {};
	\draw[green, thick] (v1) -- (v2);
	\draw[red, thick] (v2) -- (v3);
	\draw[red, thick] (v1) -- (v5);
\end{tikzpicture}
\hspace{0.4cm}
\begin{tikzpicture}[scale=0.7, every node/.style={circle, fill, inner sep=1.2pt}]
	\node[fill=black] (v1)at(-1,0) {};
	\node[fill=black] (v2) at (0, 0) {};
	\node[fill=black] (v3) at (0.6, 0.6) {};
	\node[fill=black] (v4) at (0.6, -0.6) {};
    \node[fill=black] (v5) at (-1.6, 0.6) {};
	\node[fill=black] (v6) at (-1.6, -0.6) {};
	\draw[blue, thick] (v1) -- (v2);
	\draw[red, thick] (v2) -- (v3);
    \draw[green, thick] (v2) -- (v4);
	\draw[red, thick] (v1) -- (v5);
	\draw[green, thick] (v1) -- (v6);
\end{tikzpicture}
\caption{Local structures of red, green and blue edge}
\label{local}
\end{figure}

By the above discussion, $H_1$ is a 2-regular graph  with 8 edges and there is no $C_4$ in $H_1$. Thus $H_1$ is a $C_8$. Moreover, the endpoints of any blue edges are vertices of degree two in $H_1$, so every blue edge is a chord in the red-green cycle. \hfill\qedsymbol\\

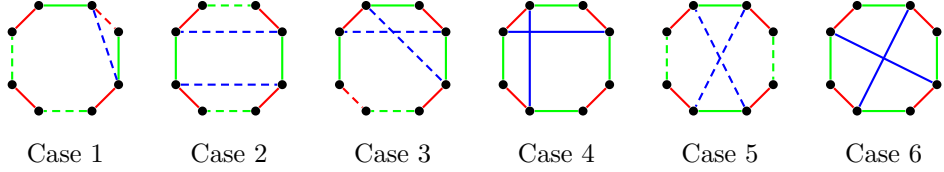
\begin{figure}[h]
	\centering
	\begin{tikzpicture}[scale=0.7, every node/.style={circle, fill, inner sep=1.2pt}]
	
	\node[fill=black] (v1)at(0.5,1) {};
	\node[fill=black] (v2) at (1, 0.5) {};
	\node[fill=black] (v3) at (1, -0.5) {};
	\node[fill=black] (v4) at (0.5, -1) {};
	\node[fill=black] (v5) at (-0.5, -1) {};
	\node[fill=black] (v6) at (-1, -0.5) {};
        \node[fill=black] (v7) at (-1, 0.5) {};
        \node[fill=black] (v8) at (-0.5, 1) {};
	\draw[densely dashed, red, thick] (v1) -- (v2);
	\draw[green, thick] (v2) -- (v3);
	\draw[red, thick] (v3) -- (v4);
	\draw[densely dashed, green, thick] (v4) -- (v5);
        \draw[red, thick] (v6) -- (v5);
        \draw[densely dashed, green, thick] (v6) -- (v7);
        \draw[red, thick] (v7) -- (v8);
        \draw[green, thick] (v8) -- (v1);
        \draw[densely dashed, blue, thick] (v1) -- (v3);
	\node[draw=none,fill=none] at (0, -1.8) {Case 1};
\end{tikzpicture}
\hspace{0.4cm}
	\begin{tikzpicture}[scale=0.7, every node/.style={circle, fill, inner sep=1.2pt}]
	
	\node[fill=black] (v1)at(0.5,1) {};
	\node[fill=black] (v2) at (1, 0.5) {};
	\node[fill=black] (v3) at (1, -0.5) {};
	\node[fill=black] (v4) at (0.5, -1) {};
	\node[fill=black] (v5) at (-0.5, -1) {};
	\node[fill=black] (v6) at (-1, -0.5) {};
        \node[fill=black] (v7) at (-1, 0.5) {};
        \node[fill=black] (v8) at (-0.5, 1) {};
	\draw[red, thick] (v1) -- (v2);
	\draw[green, thick] (v2) -- (v3);
	\draw[red, thick] (v3) -- (v4);
	\draw[densely dashed, green, thick] (v4) -- (v5);
        \draw[red, thick] (v6) -- (v5);
        \draw[green, thick] (v6) -- (v7);
        \draw[red, thick] (v7) -- (v8);
        \draw[densely dashed, green, thick] (v8) -- (v1);
        \draw[densely dashed, blue, thick] (v6) -- (v3);
        \draw[densely dashed, blue, thick] (v2) -- (v7);
	\node[draw=none,fill=none] at (0, -1.8) {Case 2};
\end{tikzpicture}
	\hspace{0.4cm}
	\begin{tikzpicture}[scale=0.7, every node/.style={circle, fill, inner sep=1.2pt}]
	
	\node[fill=black] (v1)at(0.5,1) {};
	\node[fill=black] (v2) at (1, 0.5) {};
	\node[fill=black] (v3) at (1, -0.5) {};
	\node[fill=black] (v4) at (0.5, -1) {};
	\node[fill=black] (v5) at (-0.5, -1) {};
	\node[fill=black] (v6) at (-1, -0.5) {};
        \node[fill=black] (v7) at (-1, 0.5) {};
        \node[fill=black] (v8) at (-0.5, 1) {};
	\draw[red, thick] (v1) -- (v2);
	\draw[green, thick] (v2) -- (v3);
	\draw[red, thick] (v3) -- (v4);
	\draw[densely dashed, green, thick] (v4) -- (v5);
        \draw[densely dashed, red, thick] (v6) -- (v5);
        \draw[green, thick] (v6) -- (v7);
        \draw[red, thick] (v7) -- (v8);
        \draw[green, thick] (v8) -- (v1);
        \draw[densely dashed, blue, thick] (v8) -- (v3);
        \draw[densely dashed, blue, thick] (v2) -- (v7);
	\node[draw=none,fill=none] at (0, -1.8) {Case 3};
\end{tikzpicture}
\hspace{0.4cm}
	\begin{tikzpicture}[scale=0.7, every node/.style={circle, fill, inner sep=1.2pt}]
	
	\node[fill=black] (v1)at(0.5,1) {};
	\node[fill=black] (v2) at (1, 0.5) {};
	\node[fill=black] (v3) at (1, -0.5) {};
	\node[fill=black] (v4) at (0.5, -1) {};
	\node[fill=black] (v5) at (-0.5, -1) {};
	\node[fill=black] (v6) at (-1, -0.5) {};
        \node[fill=black] (v7) at (-1, 0.5) {};
        \node[fill=black] (v8) at (-0.5, 1) {};
	\draw[red, thick] (v1) -- (v2);
	\draw[green, thick] (v2) -- (v3);
	\draw[red, thick] (v3) -- (v4);
	\draw[green, thick] (v4) -- (v5);
        \draw[red, thick] (v6) -- (v5);
        \draw[green, thick] (v6) -- (v7);
        \draw[red, thick] (v7) -- (v8);
        \draw[green, thick] (v8) -- (v1);
        \draw[blue, thick] (v2) -- (v7);
        \draw[blue, thick] (v8) -- (v5);
	\node[draw=none,fill=none] at (0, -1.8) {Case 4};
\end{tikzpicture}
\hspace{0.4cm}
	\begin{tikzpicture}[scale=0.7, every node/.style={circle, fill, inner sep=1.2pt}]
	
	\node[fill=black] (v1)at(0.5,1) {};
	\node[fill=black] (v2) at (1, 0.5) {};
	\node[fill=black] (v3) at (1, -0.5) {};
	\node[fill=black] (v4) at (0.5, -1) {};
	\node[fill=black] (v5) at (-0.5, -1) {};
	\node[fill=black] (v6) at (-1, -0.5) {};
        \node[fill=black] (v7) at (-1, 0.5) {};
        \node[fill=black] (v8) at (-0.5, 1) {};
	\draw[red, thick] (v1) -- (v2);
	\draw[densely dashed, green, thick] (v2) -- (v3);
	\draw[red, thick] (v3) -- (v4);
	\draw[green, thick] (v4) -- (v5);
        \draw[red, thick] (v6) -- (v5);
        \draw[densely dashed, green, thick] (v6) -- (v7);
        \draw[red, thick] (v7) -- (v8);
        \draw[green, thick] (v8) -- (v1);
        \draw[densely dashed, blue, thick] (v1) -- (v5);
        \draw[densely dashed, blue, thick] (v4) -- (v8);
	\node[draw=none,fill=none] at (0, -1.8) {Case 5};
\end{tikzpicture}
\hspace{0.4cm}
	\begin{tikzpicture}[scale=0.7, every node/.style={circle, fill, inner sep=1.2pt}]
	
	\node[fill=black] (v1)at(0.5,1) {};
	\node[fill=black] (v2) at (1, 0.5) {};
	\node[fill=black] (v3) at (1, -0.5) {};
	\node[fill=black] (v4) at (0.5, -1) {};
	\node[fill=black] (v5) at (-0.5, -1) {};
	\node[fill=black] (v6) at (-1, -0.5) {};
        \node[fill=black] (v7) at (-1, 0.5) {};
        \node[fill=black] (v8) at (-0.5, 1) {};
	\draw[red, thick] (v1) -- (v2);
	\draw[green, thick] (v2) -- (v3);
	\draw[red, thick] (v3) -- (v4);
	\draw[green, thick] (v4) -- (v5);
        \draw[red, thick] (v6) -- (v5);
        \draw[green, thick] (v6) -- (v7);
        \draw[red, thick] (v7) -- (v8);
        \draw[green, thick] (v8) -- (v1);
        \draw[blue, thick] (v1) -- (v5);
        \draw[blue, thick] (v3) -- (v7);
	\node[draw=none,fill=none] at (0, -1.8) {Case 6};
\end{tikzpicture}
\vspace{-1em}
\caption{Cases of adding blue chords in $H_1$}
\label{badcase'}
\end{figure}

Now we can assume $H_1$ is a $C_8$ and blue edges are chords in the cycle. We define the length of chord as the length of shortest path between the endpoints of the chord along the cycle and do casework. First, there is no blue chord of length two, since otherwise we can find a pair of disjoint $P_2$ in $G$ corresponding to a blue-red $P_2$ and a pair of disjoint green edges (See the dashed edges in Case 1). 

If there are two blue chords of length three, by symmetry there are only three cases (See Case 2,3 and 4). However, in Case 2 and 3 we can find a pair of disjoint $P_2$ (See the dashed edges in Case 2 and 3), a contradiction. In Case 4, adding any third blue chord of length at least three will lead to Case 2 or Case 3. Thus, there are at most one chord of length 2.

So there are at least two blue chords of length four, by symmetry there are only two cases (See Case 5 and 6). In Case 5 there is a pair of disjoint $P_2$ (See the dashed edges in Case 5). Finally, in Case 6 adding a third blue chord of length at least three would lead to Case 5, a contradiction. 
\end{proof}

\vspace{0.6cm}

\noindent \emphd{\textit{Proof of~\Cref{main}:}}

If there is no vertex of degree larger than $4$ in $G$, then the number of edges is no more than $4n/3$, contradicting~\Cref{contradiction}. So we can assume the maximum degree of $G$ is $k\geq 5$ and then let $d_G(v)=k$. Let $\{v,v_{2i-1},v_{2i}\mid i\in[k]\}$ be the edges incident to $v$.

When $k\geq 6$, we first claim that every other vertex in $G$ is a neighbor of $v$. Assume there is a non-neighbor $u$, by~\Cref{contradiction}, $d_G(u)\geq 2$. Take any two edges $e_1,e_2$ incident to $u$. There are at least $k-4\geq 2$ edges incident to $v$ disjoint with $e_1,e_2$ by linearity, which is a $P_2$ disjoint from $e_1$ and $e_2$, contradicting~\Cref{S2S2}. Hence $|E (G)|=\big(\sum_{u\in V(G)}d_G(u)\big)/3\leq ((n-1)/2+3(n-1))/3=7(n-1)/6<15n/11,$ contradicting~\Cref{contradiction}.

When $k= 5$, any non-neighbor $u$ of $v$ has degree no more than 2 by~\Cref{lm8}. Meanwhile, by~\Cref{degreelist}, $d_G(v_{2i-1})+d_G(v_{2i})\leq 8,i\in[5]$. So $|E (G)|\leq (5+2(n-11)+8\times 5)/3 = (2n+23)/3\leq 15n/11$. Note that equality holds only when $n=11$ and $d_G(v_{2i-1})+d_G(v_{2i})= 8, i \in[5]$, which implies that the minimum degree of $G$ is at least 3. It also implies that degree lists of edges incident to any degree 5 vertex are either $(5,5,3)$ or $(5,4,4)$.
\vspace{0.27cm}

Now we assume $|V(G)|=11,|E (G)|=15$ and show the uniqueness of the extremal graph. Let $V(G) = \{v_i\}_{0\le i \le 10}$. Assume $d_G(v_0)=5$ and $v_0v_9v_{10}$ is an edge. If there exists edge of degree list $(5,5,3)$, we let $d_G(v_0v_9v_{10}) = (5,5,3)$, and $(5,4,4)$ otherwise. 

By deleting three vertices $v_0,v_{9},v_{10}$ from $G$, we obtain a 3-graph $G_1$ with 8 vertices and 4 edges. Up to an automorphism, there are 6 cases for $G_1$ as drawn in~\Cref{G1}. Moreover, we add the labeled 2-edges of link graph $L(G,\{v_0,v_9v_{10}\})$ into $G_1$. Thus, we get a non-uniform hypergraph $G_2$, where 2-edges are labeled in $G_2$. From now on, we always use red, green and blue to color edges labeled $v_0$, $v_9$ and $v_{10}$ respectively. By linearity, in $G_2$ monochromatic 2-edges form a matching, and further a perfect matching if the 2-edges are labeled by a degree 5 vertex. In particular, the red 2-edges form a perfect matching, and each edge connects vertices of degree 5 and 3 or 4 and 4. It implies the number of degree 5 vertices equals to degree 3 vertices. Also note that the minimal degree of $G_2$ is at least 3 since the minimum degree in $G$ is at least 3. 

\begin{figure}[h]
	\centering
	\begin{tikzpicture}[scale=0.7, every node/.style={circle, fill, inner sep=1.2pt}]
	\node[fill=black] (v1) at (-1, 0) {};
		\node[fill=black] (v2) at (0, 0) {};
		\node[fill=black] (v3) at (1, 0) {};
		\node[fill=black] (v4) at (-1, 1) {};
		\node[fill=black] (v5) at (-1, 2) {};
		\node[fill=black] (v6) at (-0.33, 0.67) {};
        \node[fill=black] (v7) at (1, 1) {};
		\node[fill=red] (v8) at (0, 2) {};

		\draw[black, thick] (v1) -- (v3);
		\draw[black, thick] (v1) -- (v5);
		\draw[black, thick] (v3) -- (v4);
		\draw[black, thick] (v2) -- (v5);
        
        \draw[gray] (v8) -- (v1);
        \draw[gray] (v8) -- (v2);
        \draw[gray] (v8) -- (v3);
        \draw[gray] (v8) -- (v4);
        \draw[gray] (v8) -- (v5);
        \draw[gray] (v8) -- (v6);
        \draw[gray] (v8) -- (v7);

	\node[draw=none,fill=none] at (0, -0.8) {Case 1};
\end{tikzpicture}
	\hspace{0.4cm}
	\begin{tikzpicture}[scale=0.7, every node/.style={circle, fill, inner sep=1.2pt}]
	
    \node[fill=black] (v1) at (-1, 0) {};
		\node[fill=black] (v2) at (0, 0) {};
		\node[fill=black] (v3) at (1, 0) {};
		\node[fill=black] (v4) at (-1, 1) {};
		\node[fill=black] (v5) at (-1, 2) {};
		\node[fill=black] (v6) at (0, 1) {};
		\node[fill=black] (v9) at (1, 2) {};
        \node[fill=red] (v7) at (0.4, 2) {};
		
		\draw[black, thick] (v1) -- (v3);
		\draw[black, thick] (v3) -- (v5);
		\draw[black, thick] (v1) -- (v9);
		\draw[black, thick] (v1) -- (v5);
        
        \draw[gray] (v7) -- (v2);
        \draw[gray] (v7) -- (v9);
        \draw[gray] (v7) -- (v3);
        \draw[gray] (v7) -- (v4);
        \draw[gray] (v7) -- (v5);
        \draw[gray] (v7) -- (v6);

	\node[draw=none,fill=none] at (0, -0.8) {Case 2};
\end{tikzpicture}
\hspace{0.4cm}
	\begin{tikzpicture}[scale=0.7, every node/.style={circle, fill, inner sep=1.2pt}]
	
	\node[fill=black] (v1) at (-1, 0) {};
		\node[fill=black] (v2) at (0, 1) {};
		\node[fill=black] (v3) at (1, 0) {};
		\node[fill=black] (v4) at (-1, 1) {};
		\node[fill=black] (v5) at (-1, 2) {};
		\node[fill=black] (v6) at (1, 1) {};
		\node[fill=black] (v7) at (1, 2) {};	
        \node[fill=red] (v8) at (0, 2) {};	
		
		\draw[black, thick] (v1) -- (v7);
		\draw[black, thick] (v1) -- (v5);
		\draw[black, thick] (v3) -- (v5);
		\draw[black, thick] (v3) -- (v7);

        \draw[gray] (v8) -- (v1);
        \draw[gray] (v8) -- (v7);
        \draw[gray] (v8) -- (v3);
        \draw[gray] (v8) -- (v4);
        \draw[gray] (v8) -- (v5);
        \draw[gray] (v8) -- (v6);
        
	\node[draw=none,fill=none] at (0, -0.8) {Case 3};
\end{tikzpicture}
\hspace{0.4cm}
	\begin{tikzpicture}[scale=0.7, every node/.style={circle, fill, inner sep=1.2pt}]
	
	\node[fill=black] (v1) at (-1, 0) {};
		\node[fill=black] (v2) at (0, 0) {};
		\node[fill=black] (v3) at (1, 0) {};
	
		\node[fill=black] (v5) at (-1, 2) {};
		\node[fill=black] (v6) at (0.1, 0.9) {};
		\node[fill=black] (v8) at (1, 1.15) {};
		\node[fill=black] (v9) at (1, 2.1) {};
		
		\draw[black, thick] (v1) -- (v3);
		\draw[black, thick] (v1) -- (v5);
		\draw[black, thick] (v3) -- (v5);
		\draw[black, thick] (v3) -- (v9);

    	\node[fill=red] (v4) at (-1, 1.05) {};
        \draw[gray] (v4) -- (v6);
        \draw[gray] (v4) -- (v2);
        \draw[gray] (v4) -- (v8);
        \draw[gray] (v4) -- (v9);

	\node[draw=none,fill=none] at (0, -0.8) {Case 4};
\end{tikzpicture}
\hspace{0.4cm}
	\begin{tikzpicture}[scale=0.7, every node/.style={circle, fill, inner sep=1.2pt}]
	
````````\node[fill=blue] (v1) at (-1, 0) {};
		
		\node[fill=blue] (v3) at (1, 0) {};
		\node[fill=black] (v5) at (-1, 2) {};
		\node[fill=black] (v6) at (1, 1.3) {};
		\node[fill=black] (v7) at (1, 2.1) {};
		
		\draw[black, thick] (v1) -- (v3);
		\draw[black, thick] (v1) -- (v5);
		\draw[black, thick] (v3) -- (v7);
        \node[fill=blue] (v2) at (0.2, 0) {};
		\draw[black, thick] (v5) -- (v2);

        \node[fill=red] (v4) at (-1, 0.9) {};
        \node[fill= red ] (v8) at (-0.3, 0.85) {};
        \draw[red] (v4) -- (v2);
		\draw[green] (v4) -- (v3);
        \draw[red] (v8) -- (v3);
		\draw[green] (v8) -- (v1);

        \draw[gray] (v4) -- (v6);
        \draw[gray] (v4) -- (v7);
        \draw[gray] (v4) -- (v8);
        \draw[gray] (v8) -- (v6);
        \draw[gray] (v8) -- (v7);

	\node[draw=none,fill=none] at (0, -0.8) {Case 5};
\end{tikzpicture}
\hspace{0.4cm}
	\begin{tikzpicture}[scale=0.7, every node/.style={circle, fill, inner sep=1.2pt}]
	
        \node[fill=black] (v9) at (1, 2) {};
		\node[fill=black] (v1) at (-1, 0) {};
		\node[fill=black] (v3) at (1, 0) {};
		\node[fill=black] (v5) at (-1, 2) {};
		\node[fill=black] (v7) at (1, 2) {};

		\draw[black, thick] (v1) -- (v3);
		\draw[black, thick] (v9) -- (v5);
		\draw[black, thick] (v3) -- (v9);
		\draw[black, thick] (v1) -- (v5);
        
        \node[fill=red] (v2) at (0, 0) {};
		\node[fill=red] (v4) at (-1, 1) {};     
        \node[fill=red] (v6) at (1, 1) {};
        \node[fill=red] (v8) at (0, 2) {};
        
        \draw[gray] (v2) -- (v6);
        \draw[gray] (v6) -- (v8);
        \draw[gray] (v4) -- (v8);
        \draw[gray] (v4) -- (v2);

        \draw[densely dashed, black, thick] (v8) -- (v3);
		\draw[densely dashed, black, thick] (v8) -- (v2);
		\draw[densely dashed, black, thick] (v8) -- (v1);
        
	\node[draw=none,fill=none] at (0, -0.8) {Case 6};
\end{tikzpicture}
\vspace{-1em}
\caption{All cases of $G_1$ and the gray pairs are disjoint from some $P_2$}
\label{G1}
\end{figure}
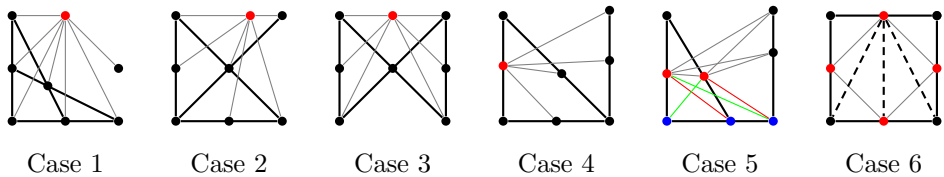

The key observation is that a $P_2$ in $G_1$ cannot be disjoint from a colored edge in $G_2$. Otherwise the colored edge together with $\{v_0,v_9,v_{10}\}$ form a second $P_2$ in $G$ disjoint from the first one, contradicting \Cref{2P2}. As a result, the gray pairs in~\Cref{G1} cannot be used as a colored edge, which implies the red vertex in Case 1,2,3,4 has degree smaller than 3 in $G_2$, contradicting the minimum degree of $G_2$. 

In Case 5, the two red vertices (see~\Cref{G1}, Case 5) have degree at most 3 so the minimum degree in $G_2$, and thus $G$, is exactly 3. So we know that $d_G(v_0v_9v_{10}) = (5,5,3)$. As a result, red and green edges form two perfect matchings in $G_2$. So, the colored edges incident to red vertices are red and green (see~\Cref{G1}, Case 5).

Meanwhile, since $v_0$ and $v_9$ are degree 5 vertices in $G$, the red and green edges must connect vertices of degree 3 and 5 or of degree 4 and 4 in $G_2$. Now that blue vertices are connected to red vertices of degree 3. It implies that blue vertices all have degree 5 while they are in the same edge, contradicting~\Cref{degreelist}.

Now the only possible case is Case 6, we can see that the red vertices in Case 6 (See~\Cref{G1}) have degree at most 4 for the same reason. Moreover, if a red vertex reaches degree 4, then its three incident colored edges is unique (See dashed edges drawn in Case 6). Next we will need more careful argument to determine the structure of $G_2$.
 \begin{figure}[h]
    \centering
\begin{tikzpicture}[scale=0.7, every node/.style={circle, fill, inner sep=1.2pt}]
        \node[fill=black] (v9) at (1, 2) {};
		\node[fill=black] (v1) at (-1, 0) {};
		\node[fill=black] (v3) at (1, 0) {};
		\node[fill=black] (v5) at (-1, 2) {};
		\node[fill=black] (v7) at (1, 2) {};

		\draw[black, thick] (v1) -- (v3);
		\draw[black, thick] (v9) -- (v5);
		\draw[black, thick] (v3) -- (v9);
		\draw[black, thick] (v1) -- (v5);

        \node[fill=red] (v2) at (0, 0) {};
		\node[fill=red] (v4) at (-1, 1) {};     
        \node[fill=red] (v6) at (1, 1) {};
        \node[fill=red] (v8) at (0, 2) {};

        \draw[red, thick] (v8) -- (v3);
        \draw[red, thick] (v5) -- (v2);
        \draw[red, thick] (v1) -- (v6);
        \draw[red, thick] (v7) -- (v4);
\end{tikzpicture}
\hspace{0.6cm}
\begin{tikzpicture}[scale=0.7, every node/.style={circle, fill, inner sep=1.2pt}]
    
        \node[fill=black] (v9) at (1, 2) {};
		\node[fill=black] (v1) at (-1, 0) {};
		\node[fill=black] (v3) at (1, 0) {};
		\node[fill=black] (v5) at (-1, 2) {};
		\node[fill=black] (v7) at (1, 2) {};

		\draw[black, thick] (v1) -- (v3);
		\draw[black, thick] (v9) -- (v5);
		\draw[black, thick] (v3) -- (v9);
		\draw[black, thick] (v1) -- (v5);

        \node[fill=red] (v2) at (0, 0) {};
		\node[fill=red] (v4) at (-1, 1) {};     
        \node[fill=red] (v6) at (1, 1) {};
        \node[fill=red] (v8) at (0, 2) {}; 

		\draw[red, thick] (v5) -- (v2);
        \draw[red, thick] (v8) -- (v3);
        \draw[red, thick] (v4) -- (v6);
        \draw[red, thick] (v7) -- (v1);

\end{tikzpicture}
\hspace{0.6cm}
\begin{tikzpicture}[scale=0.7, every node/.style={circle, fill, inner sep=1.2pt}]
    
        \node[fill=black] (v9) at (1, 2) {};
		\node[fill=black] (v1) at (-1, 0) {};
		\node[fill=black] (v3) at (1, 0) {};
		\node[fill=black] (v5) at (-1, 2) {};
		\node[fill=black] (v7) at (1, 2) {};

		\draw[black, thick] (v1) -- (v3);
		\draw[black, thick] (v9) -- (v5);
		\draw[black, thick] (v3) -- (v9);
		\draw[black, thick] (v1) -- (v5);

        \node[fill=red] (v2) at (0, 0) {};
		\node[fill=red] (v4) at (-1, 1) {};     
        \node[fill=red] (v6) at (1, 1) {};
        \node[fill=red] (v8) at (0, 2) {}; 

		\draw[red, thick] (v8) -- (v2);
        \draw[red, thick] (v5) -- (v3);
        \draw[red, thick] (v4) -- (v6);
        \draw[red, thick] (v7) -- (v1);

\end{tikzpicture}

\caption{Perfect matchings in Case 6}
\label{perfectmatch}
\end{figure}
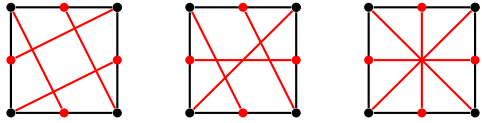

To start with, since $v_0$ is a degree 5 vertex, there are are 4 red 2-edges in $G_2$ which form a perfect matching. However in Case 6, up to an homomorphism, there are only three kinds of perfect matching as drawn in~\Cref{perfectmatch} (Note that grey pairs in Case 6 in~\Cref{G1} cannot be colored).
\begin{figure}[h]
    \centering
\begin{tikzpicture}[scale=0.7, every node/.style={circle, fill, inner sep=1.2pt}]
	\node[draw=none,fill=none] at (-1.4, 2.4) {$v_1$};
    \node[draw=none,fill=none] at (0, 2.4) {$v_2$};
    \node[draw=none,fill=none] at (1.4, 2.4) {$v_3$};
    \node[draw=none,fill=none] at (-1.5, 1) {$v_4$};
    \node[draw=none,fill=none] at (1.5, 1) {$v_5$};
    \node[draw=none,fill=none] at (-1.5, -0.4) {$v_6$};
    \node[draw=none,fill=none] at (0, -0.4) {$v_7$};
    \node[draw=none,fill=none] at (1.5, -0.4) {$v_8$};
    
        \node[fill=black] (v9) at (1, 2) {};
		\node[fill=black] (v1) at (-1, 0) {};
		\node[fill=black] (v3) at (1, 0) {};
		\node[fill=black] (v5) at (-1, 2) {};
		\node[fill=black] (v7) at (1, 2) {};

		\draw[black, thick] (v1) -- (v3);
		\draw[black, thick] (v9) -- (v5);
		\draw[black, thick] (v3) -- (v9);
		\draw[black, thick] (v1) -- (v5);

        \node[fill=red] (v2) at (0, 0) {};
		\node[fill=red] (v4) at (-1, 1) {};     
        \node[fill=red] (v6) at (1, 1) {};
        \node[fill=red] (v8) at (0, 2) {}; 

        \draw[densely dashed, black, thick] (v8) -- (v3);
		\draw[densely dashed,black, thick] (v8) -- (v2);
		\draw[densely dashed,black, thick] (v8) -- (v1);
        
        \draw[densely dashed,black, thick] (v2) -- (v5);
		\draw[densely dashed,black, thick] (v9) -- (v2);

        \draw[densely dashed,black, thick] (v1) -- (v6);
		\draw[densely dashed,black, thick] (v5) -- (v6);
		\draw[densely dashed,black, thick] (v4) -- (v6);

        \draw[densely dashed,black, thick] (v4) -- (v3);
		\draw[densely dashed,black, thick] (v4) -- (v7);

    \node[draw=none,fill=none] at (3.6, 2.4) {$v_1$};
    \node[draw=none,fill=none] at (5, 2.4) {$v_2$};
    \node[draw=none,fill=none] at (6.4, 2.4) {$v_3$};
    \node[draw=none,fill=none] at (3.5, 1) {$v_4$};
    \node[draw=none,fill=none] at (6.5, 1) {$v_5$};
    \node[draw=none,fill=none] at (3.5, -0.4) {$v_6$};
    \node[draw=none,fill=none] at (5, -0.4) {$v_7$};
    \node[draw=none,fill=none] at (6.5, -0.4) {$v_8$};
    
        \node[fill=black] (u9) at (6, 2) {};
		\node[fill=black] (u1) at (4, 0) {};
		\node[fill=black] (u3) at (6, 0) {};
		\node[fill=black] (u5) at (4, 2) {};
		\node[fill=black] (u7) at (6, 2) {};

		\draw[black, thick] (u1) -- (u3);
		\draw[black, thick] (u9) -- (u5);
		\draw[black, thick] (u3) -- (u9);
		\draw[black, thick] (u1) -- (u5);

        \node[fill=red] (u2) at (5, 0) {};
		\node[fill=red] (u4) at (4, 1) {};     
        \node[fill=red] (u6) at (6, 1) {};
        \node[fill=red] (u8) at (5, 2) {}; 

        \draw[densely dashed, black, thick] (u2) -- (u8);
		\draw[red, thick] (u5) -- (u2);
		\draw[densely dashed, black, thick] (u5) -- (u6);
        \draw[densely dashed, black, thick] (u3) -- (u4);
		\draw[red, thick] (u3) -- (u8);
		\draw[densely dashed, black, thick] (u1) -- (u8);
        \draw[densely dashed, black, thick] (u4) -- (u6);
        \draw[red, thick] (u1) -- (u6);
        \draw[red, thick] (u4) -- (u9);
        \draw[densely dashed, black, thick] (u2) -- (u9);
\end{tikzpicture}
\caption{Cases of all degree 4 vertices}
\label{Case6-12}
\end{figure}
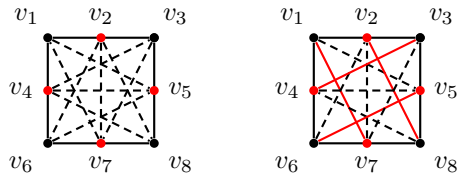

We first suppose every vertex in $G_2$ has degree 4 and thus the colored 2-edges incident to $v_2,v_4,v_5,v_7$ are uniquely determined (there are at most 3 possible 2-edges incident to each of these vertices). Then all the ten colored edges are determined (See dashed edges in the first graph of~\Cref{Case6-12}). Note that there are no diagonal $v_3v_6$ or $v_1v_8$ in these ten edges, and thus only the first graph in Figure 9 satisfied this condition. So we may assume $v_1v_7, v_2v_8, v_3v_4, v_5v_6$ are red. Note that after removing these four red edges, the green-blue edges in the first graph of ~\Cref{Case6-12} will form two disjoint $P_4's$: $v_1v_5v_4v_8$ and $v_6v_2v_7v_3$. Then we can find a green-blue $P_2$ of $v_1v_5v_4$ disjoint from a black-red $P_2$ of $v_6v_7v_8v_2$, contradicting~\Cref{2P2}.

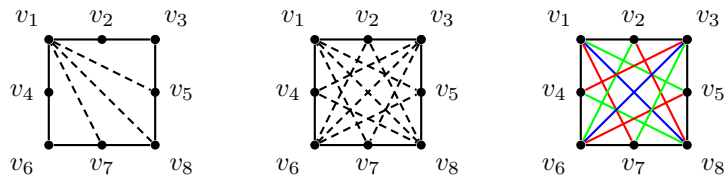
\begin{figure}[h!]
    \centering
\begin{tikzpicture}[scale=0.7, every node/.style={circle, fill, inner sep=1.2pt}]
\node[draw=none,fill=none] at (-6.4, 2.4) {$v_1$};
    \node[draw=none,fill=none] at (-5, 2.4) {$v_2$};
    \node[draw=none,fill=none] at (-3.6, 2.4) {$v_3$};
    \node[draw=none,fill=none] at (-6.5, 1) {$v_4$};
    \node[draw=none,fill=none] at (-3.5, 1) {$v_5$};
    \node[draw=none,fill=none] at (-6.5, -0.4) {$v_6$};
    \node[draw=none,fill=none] at (-5, -0.4) {$v_7$};
    \node[draw=none,fill=none] at (-3.5, -0.4) {$v_8$};
    
        \node[fill=black] (v9) at (-4, 2) {};
		\node[fill=black] (v1) at (-6, 0) {};
		\node[fill=black] (v3) at (-4, 0) {};
		\node[fill=black] (v5) at (-6, 2) {};
		\node[fill=black] (v7) at (-4, 2) {};

		\draw[black, thick] (v1) -- (v3);
		\draw[black, thick] (v9) -- (v5);
		\draw[black, thick] (v3) -- (v9);
		\draw[black, thick] (v1) -- (v5);

        \node[fill=black] (v2) at (-5, 0) {};
		\node[fill=black] (v4) at (-6, 1) {};     
        \node[fill=black] (v6) at (-4, 1) {};
        \node[fill=black] (v8) at (-5, 2) {}; 

        \draw[densely dashed, black, thick] (v5) -- (v3);
        \draw[densely dashed,black, thick] (v2) -- (v5);
		\draw[densely dashed,black, thick] (v5) -- (v6);

	\node[draw=none,fill=none] at (-1.4, 2.4) {$v_1$};
    \node[draw=none,fill=none] at (0, 2.4) {$v_2$};
    \node[draw=none,fill=none] at (1.4, 2.4) {$v_3$};
    \node[draw=none,fill=none] at (-1.5, 1) {$v_4$};
    \node[draw=none,fill=none] at (1.5, 1) {$v_5$};
    \node[draw=none,fill=none] at (-1.5, -0.4) {$v_6$};
    \node[draw=none,fill=none] at (0, -0.4) {$v_7$};
    \node[draw=none,fill=none] at (1.5, -0.4) {$v_8$};
    
        \node[fill=black] (v9) at (1, 2) {};
		\node[fill=black] (v1) at (-1, 0) {};
		\node[fill=black] (v3) at (1, 0) {};
		\node[fill=black] (v5) at (-1, 2) {};
		\node[fill=black] (v7) at (1, 2) {};

		\draw[black, thick] (v1) -- (v3);
		\draw[black, thick] (v9) -- (v5);
		\draw[black, thick] (v3) -- (v9);
		\draw[black, thick] (v1) -- (v5);

        \node[fill=black] (v2) at (0, 0) {};
		\node[fill=black] (v4) at (-1, 1) {};     
        \node[fill=black] (v6) at (1, 1) {};
        \node[fill=black] (v8) at (0, 2) {}; 

        \draw[densely dashed, black, thick] (v8) -- (v3);
		\draw[densely dashed,black, thick] (v5) -- (v3);
		\draw[densely dashed,black, thick] (v8) -- (v1);
        
        \draw[densely dashed,black, thick] (v2) -- (v5);
		\draw[densely dashed,black, thick] (v9) -- (v2);

        \draw[densely dashed,black, thick] (v1) -- (v6);
		\draw[densely dashed,black, thick] (v5) -- (v6);
		\draw[densely dashed,black, thick] (v7) -- (v1);

        \draw[densely dashed,black, thick] (v4) -- (v3);
		\draw[densely dashed,black, thick] (v4) -- (v7);
    
    \node[draw=none,fill=none] at (3.6, 2.4) {$v_1$};
    \node[draw=none,fill=none] at (5, 2.4) {$v_2$};
    \node[draw=none,fill=none] at (6.4, 2.4) {$v_3$};
    \node[draw=none,fill=none] at (3.5, 1) {$v_4$};
    \node[draw=none,fill=none] at (6.5, 1) {$v_5$};
    \node[draw=none,fill=none] at (3.5, -0.4) {$v_6$};
    \node[draw=none,fill=none] at (5, -0.4) {$v_7$};
    \node[draw=none,fill=none] at (6.5, -0.4) {$v_8$};
    
        \node[fill=black] (u9) at (6, 2) {};
		\node[fill=black] (u1) at (4, 0) {};
		\node[fill=black] (u3) at (6, 0) {};
		\node[fill=black] (u5) at (4, 2) {};
		\node[fill=black] (u7) at (6, 2) {};

		\draw[black, thick] (u1) -- (u3);
		\draw[black, thick] (u9) -- (u5);
		\draw[black, thick] (u3) -- (u9);
		\draw[black, thick] (u1) -- (u5);

        \node[fill=black] (u2) at (5, 0) {};
		\node[fill=black] (u4) at (4, 1) {};     
        \node[fill=black] (u6) at (6, 1) {};
        \node[fill=black] (u8) at (5, 2) {}; 

        \draw[blue, thick] (u5) -- (u3);
		\draw[red, thick] (u5) -- (u2);
		\draw[green, thick] (u5) -- (u6);
        \draw[green, thick] (u3) -- (u4);
		\draw[red, thick] (u3) -- (u8);
		\draw[green, thick] (u1) -- (u8);
        \draw[blue, thick] (u1) -- (u9);
        \draw[red, thick] (u1) -- (u6);
        \draw[red, thick] (u4) -- (u9);
        \draw[green, thick] (u2) -- (u9);
\end{tikzpicture}
\caption{Case 6-3}
\label{Case6-3}
\end{figure}

Since the red edges form a matching with end points of degree $(5,3)$ or $(4,4)$ and by the discussion above not all vertices in $G_2$ have degree 4, there exists 
a degree 5 vertex in $G_2$. By our choices of $v_0v_9v_{10}$, it follows that $v_0v_9v_{10}$ is $(5,5,3)$.
Note that edges colored red and green both form a perfect matching, and for any degree 5 vertex in $G_2$, the end points in these two perfect matchings have degree 3, and vice versa. That is, the number of degree 5 vertices equals to degree 3 vertices. Thus there are at least 2 vertices in $G_2$ of degree 3 and thus at least 2 vertices in $G_2$ of degree 5.

Recall that $v_2,v_5,v_4,v_7$ have degree at most 4, so the degree 5 vertices can only be $v_1,v_3,v_6$ or $v_8$. We will show that not even two but all of these four have to be degree 5. Without loss of generality, assume $d_{G_2}(v_1)=5$. Note that $v_1$ has three non-neighbor in $G_1$, so the colored edges incident to $v_1$ is uniquely determined (See the dashed edges in the first figure in~\Cref{Case6-3}). Since we assume the degree of $v_1$ to be 5, two of its neighbors (connected by red and green edges) should be degree 3. So two of $v_5,v_8,v_7$ are degree 3. Then by pigeonhole principle at least one of $v_5,v_8$ and one of $v_7,v_8$ is degree 3. So $d(v_3v_5v_8)$ and $d(v_6v_7v_8)$ cannot be $(5,4,4)$. 

Assume $d_{G_2}(v_3)=5$, then $d(v_3v_5v_8)=(5,5,3)$. While $d_{G_2}(v_5)\le4$, so $v_5$ has to be the degree 3 vertex and further $d_{G_2}(v_3)=d_{G_2}(v_8)=5$. So $d_{G_2}(v_3)=5$ implies $d_{G_2}(v_8)=5$, and vice versa. By symmetry it also works for $v_6$ and $v_8$. Recall that there is a second vertex of degree 5 in $v_1,v_3,v_6$ or $v_8$, we conclude that all these four vertex have degree 5, and thus the ten colored edges are determined (see dashed edges in the second graph of~\Cref{Case6-3}).

Since $v_2v_7$ and $v_4v_5$ are not colored, among all the perfect matching as shown in~\Cref{perfectmatch}, only the first type is possible to be colored red. By symmetry we can assume $v_1v_7,v_2v_8,v_3v_4,v_5v_7$ are colored red. Then similarly the perfect matching colored green can only chosen to be $v_1v_3,v_2v_6,v_3v_7,v_4v_8$ and the two edges colored blue must be $v_1v_8,v_3v_6$ as drawn in the second figure in~\Cref{Case6-3}. We can check that this is the unique extremal graph $G_0$ we defined in section 1.\hfill\qedsymbol\\

In a summary, in this paper we show that the unique extremal linear 3-graphs that does not contain $P_5$ is the disjoint union of an 11 vertex graph $G_0$. But the general lower bound is obtained from disjoint union of maximal partial steiner system on $2k$ vertices. Maybe there are more general extremal graph on $2k+1$ vertices that can beat the bound for $MPTS(2k)$ for bigger $k$. We will keep it as an open question.

\nocite{*}
\bibliographystyle{plain}
\bibliography{ref}

\end{document}